\documentclass[10pt,reqno]{amsart}
\usepackage{amsmath,amssymb,amsthm,amsfonts}
\usepackage{cite}
\usepackage[height=190mm,width=130mm]{geometry}
\usepackage{graphicx, tikz}
\usepackage{comment}
\usepackage{etoolbox}
\usepackage{mathtools}
\usepackage{color}
\usepackage{color}
\usepackage{cancel}

\usepackage[colorlinks=true,linkcolor=blue,unicode,psdextra]{hyperref}
\usepackage{color}
\usepackage{amsthm}
\usepackage{amsmath}
\usepackage{amssymb}
\usepackage{enumitem}
\usepackage{mathtools}
\usepackage{mathrsfs}
\usepackage[all]{xy}
\usepackage{tikz}
\usetikzlibrary{arrows.meta,positioning,calc,decorations.pathmorphing,shapes.geometric,fit,backgrounds}
\usepackage{booktabs}
\usepackage{array}

\DeclareMathOperator{\coker}{coker}

\DeclareMathOperator{\End}{End}

\DeclareMathOperator{\gr}{gr}
\DeclareMathOperator{\Hoch}{HH}
\DeclareMathOperator{\id}{Id}

\DeclareMathOperator{\PSL}{PSL}

\DeclareMathOperator{\zhu}{Zhu}
\DeclareMathOperator{\hh}{HH}
\DeclareMathOperator{\HK}{HK}
\DeclareMathOperator{\HP}{HP}

\newcommand\cF{\textit{F}}

\renewcommand\sl{\mathfrak{sl}}

\newcommand\bJ{\mathbb{J}}

\newcommand\im{\Im}

\newcommand{\vac}{\mathbf{1}}
\newcommand{\vir}{\text{Vir}}

\newcommand{\g}{\mathfrak{g}}

\def\owp{\overline{\wp}}

\newcommand{\Z}{\mathbb{Z}}

\newcommand{\CC}{\mathcal{C}}
\newcommand{\CD}{\mathcal{D}}

\newcommand{\HH}{\mathbb{H}}

\newcommand{\CM}{\mathcal{M}}
\newcommand{\OO}{\mathcal{O}}

\renewcommand\sl{\mathfrak{sl}}
\newcommand{\La}{\Lambda}

\newcommand{\Sch}{\Sigma}                     
\newcommand{\rhov}{\boldsymbol{\rho}}         
\newcommand{\gendom}{\Omega}                  
\newcommand{\Xg}[1]{X^{(#1)}}                 
\newcommand{\cFg}[2]{\cF^{(#1)}_{#2}}          
\newcommand{\bJg}[2]{\bJ^{(#1),#2}_*}          
\newcommand{\Modg}{\mathrm{Mod}_g}            
\newcommand{\node}{\mathrm{pt}}
\newcommand{\HchG}[2]{H^{\mathrm{ch}}_{#1}(#2)} 
\newtheoremstyle{exps}{\topsep}{\topsep}{}{0pt}{\bfseries}{.}{0pt}{}
\newtheorem*{thm*}{Theorem}
\newtheorem*{prop*}{Proposition}
\newtheorem*{lem*}{Lemma}
\newtheorem*{cor*}{Corollary}
\newtheorem*{rem*}{Remark}
\newtheorem{thm}{Theorem}[section]
\newtheorem{prop}[thm]{Proposition}
\newtheorem{lem}[thm]{Lemma}
\newtheorem{ex}[thm]{Example}
\newtheorem{cor}[thm]{Corollary}

\theoremstyle{definition}
\newtheorem*{defn*}{Definition}
\newtheorem{defn}[thm]{Definition}
\newtheorem{rem}[thm]{Remark}
\newtheorem{nolabel}[thm]{ } 

\theoremstyle{exps}

\definecolor{brown}{RGB}{150,100,0}

\definecolor{purple}{RGB}{150,0,100}
\definecolor{grey}{RGB}{128,128,128}

\usepackage{cancel}

\mathchardef\mhyp="2D

\def\NABLA#1{{\mathop{\nabla\kern-.5ex\lower1ex\hbox{$#1$}}}}
\def\Nabla#1{\nabla\kern-.5ex{}_#1}

\mathchardef\mhyp="2D

\theoremstyle{plain}

\theoremstyle{definition}

\theoremstyle{remark}

\newcommand{\D}{\ensuremath{\mathcal{D}}}

\newcommand{\ps}{{\raise 1pt\hbox{\tiny (}}}

\newcommand{\pss}{{\raise 1pt\hbox{\tiny [}}}
\newcommand{\pdd}{{\raise 1pt\hbox{\tiny ]}}}
\newcommand{\pd}{{\raise 1pt\hbox{\tiny )}}}

\newcommand{\bs}{{\raise 1pt\hbox{\tiny [}}}
\newcommand{\bd}{{\raise 1pt\hbox{\tiny ]}}}

\def\cross{\mathinner{\mathrel{\raise0.8pt\hbox{$\scriptstyle>$}}
		\joinrel\mathrel\triangleleft}}

\def\K{\mathcal{K}}

\def\id{\mathop\text{\rm id}\nolimits}

\newcommand{\res}{\mbox{\rm Res}}

\newcommand{\wt}{\mbox{\rm wt}\ }

\newcommand{\tr}{\mbox{\rm Tr}}

\newcommand{\nc}{\newcommand}
\nc{\cali}{\mathcal}
\nc{\on}{\operatorname}
\nc{\Wick}{{\mathbf :}}
\nc{\ddz}{\frac{\partial}{\partial z}}
\nc{\Oo}{{\cali O}}
\nc{\cond}{|\,}
\nc{\bib}{\bibitem}
\nc{\pone}{\mathbb{P}^1}
\nc{\pa}{\partial}
\nc{\arr}{\rightarrow}
\nc{\larr}{\longrightarrow}
\nc{\ket}{\rangle}
\nc{\bra}{\langle}
\nc{\gam}{\bar{\gamma}}
\nc{\q}{\widetilde{Q}}
\nc{\ep}{\epsilon}
\nc{\su}{\widehat{{\mathfrak{s}}{\mathfrak{l}}}_2}
\nc{\sw}{{\mathfrak{s}}{\mathfrak{l}}}
\nc{\n}{\mathfrak{n}}
\nc{\ab}{\mathfrak{a}}
\nc{\is}{\mathbf{i}}
\nc{\js}{\mathbf{j}}
\nc{\bi}{\bibitem}
\nc{\He}{{\cali H}}
\nc{\inv}{^{-1}}
\nc{\ol}{\overline}
\nc{\wh}{\widehat}
\nc{\dst}{\displaystyle}
\nc{\delt}{\partial_t}
\nc{\ddt}{\frac{\partial}{\partial t}}
\nc{\delx}{\partial_x}
\nc{\mb}{\mathbf}
\nc{\mf}{\mathfrak}
\nc{\mbb}{\mathbb}
\nc{\Ctt}{\mathbb{C}((t))}
\nc{\Ct}{\mathbb{C}[t,t\inv]}
\nc{\ghat}{\wh{\Gamma}}

\nc{\un}{\underline}
\nc{\mc}{\mathcal}
\nc{\BB}{{\mc B}}
\nc{\bb}{{\mf b}}
\nc{\kk}{{\mf k}}
\nc{\frob}{\times}
\nc{\sm}{\setminus}
\nc{\Pp}{\mathbb{P}^1}
\nc{\Aa}{\mc A}
\nc{\AutO}{\on{Aut}\Oo}
\nc{\AUTO}{\un{\on{Aut}}\Oo}
\nc{\AUTK}{\un{\on{Aut}}\K}
\nc{\Heout}{\He_{\out}}
\nc{\Hetil}{{\widetilde\He}}
\nc{\wb}{\overline}
\nc{\Res}{\on{Res}}
\nc{\pitil}{\Pi}
\nc{\Ctil}{\wt{C}}
\nc{\auto}{\on{Aut} \Oo}
\nc{\phitil}{\wt{\phi}}
\nc{\gz}{\Gamma_{\vec z}}
\nc{\tensorM}{\bigotimes_{i=1}^N{\mathbb M}_i}
\nc{\tensorW}{\bigotimes_{i=1}^N W_{\nu_i,k}}
\nc{\out}{\on{out}}
\nc{\m}{\mathfrak{m}}
\nc{\gx}{\Gamma^0_{\vec x}}
\nc{\hx}{\He^0_{\vec x}}
\nc{\tensorpi}{\pi_{\nu_1,\ldots,\nu_N}^\kappa}
\nc{\Phizw}{\Phi_{\vec w}({\vec z})}
\nc{\Pro}{\mathbb{P}}
\nc{\De}{\Delta}
\nc{\us}{\underset}
\nc{\Ll}{\mc L}
\nc{\dR}{\on{dR}}
\nc{\T}{\mc T}
\nc{\Xn}{\overset{\circ}X{}^n} 
\nc{\Dn}{\overset{\circ}D{}^n}
\nc{\Dxn}{\overset{\circ}D{}^n_x} 
\nc{\varphitil}{\wt{\varphi}}
\nc{\lf}{\mathfrak{l}}
\nc{\Vir}{\on{Vir}}

\numberwithin{equation}{section}

\begin{document}
	
	\title[The first chiral homology group in higher genus]
	{The first chiral homology group in higher genus} 
	
	\author{A. Zuevsky}
	\address{Institute of Mathematics, Czech Academy of Sciences, \v{Z}itn\'{a} 25, 
Prague, Czech Republic}
	\email{zuevsky@yahoo.com}
	
	\maketitle
\begin{abstract}
We extend the theory of the first chiral homology group of vertex algebras, developed by van Ekeren and Heluani for elliptic curves, to compact Riemann surfaces of arbitrary genus. Our approach realizes a genus $g$ surface by iterated self-sewing of $g$ handles onto the Riemann sphere, each governed by a sewing parameter $\rho_i$ in a punctured disc, so that the construction of van Ekeren-Heluani is recovered.
 We construct an explicit complex computing the chiral homology groups $H^{\mathrm{ch}}_0$ and $H^{\mathrm{ch}}_1$ of a vertex algebra $V$ on a genus $g$ surface with $n$ marked points, equip it with a projectively flat connection, with an explicit central-charge anomaly, over the $g$-dimensional space of sewing parameters, and prove a genus $g$ Fourier-space Borcherds identity for the associated modified vertex operators. 
  We show that the same two finiteness hypotheses isolated by van Ekeren and Heluani in genus $1$ - finite dimensionality of the first Poisson homology $\HP_1(R_V)$ of the Zhu $C_2$-algebra, and finite generation of a certain Koszul homology of the associated graded algebra - imply finite dimensionality of $H^{\mathrm{ch}}_1(X,V)$ for every genus $g$ and every vertex algebra $V$, answering a question left open in their work. 
 Using the degeneration $\rho_i \to 0$ together with the factorization theorem of Damiolini-Gibney-Tarasca, we relate the totally degenerate limit of $H_1^{\mathrm{ch}}$ to the Hochschild homology of an iterated construction on the Zhu algebra, and deduce vanishing of the first chiral homology group in every genus for the same classically free, rational vertex algebras treated in genus one.  
\end{abstract}

\medskip 

Keywords: Chiral homology; Vertex operator algebras; Conformal field theories; Families, moduli of algebraic curves; Relationships with physics

AMS Classification [2020]: Primary 81T40, 14H81, 17B69; Secondary 14H15 
	
\tableofcontents

\section{Introduction}
\begin{nolabel}
\label{no:hh-review}
Let $k$ be a field, $A$ an associative $k$-algebra and $\rho_M : A \to \End_k(M)$ a finite dimensional $A$-module. The trace functional $\varphi^0_M(a) = \tr_M \rho_M(a)$ defines a class in $\hh_0(A)^* = (A/[A,A])^*$, and if $A$ is finite dimensional and semisimple these classes, as $M$ ranges over the irreducibles, form a basis of $\hh_0(A)^*$. Given a self-extension $0 \to M \to E \to M \to 0$ with associated derivation $\sigma : A \to \End_k(M)$, the functional $\varphi^1_E(a \otimes b) = \tr_M \sigma(a)\rho_M(b)$ defines a class in $\hh_1(A)^*$, independent of the choice of splitting. Van Ekeren and Heluani constructed a vertex algebra analogue of this pair of constructions, with the genus $1$ chiral homology of Beilinson and Drinfeld \cite{beilinsondrinfeld} in place of Hochschild homology, and with Zhu's theory of characters and $n$-point functions \cite{zhu} furnishing the degree $0$ case \cite{eh2018,vanEkerenHeluani}. Precisely, for $V$ a conformal vertex algebra of central charge $c$, $M$ an admissible module and $\tau$ in the upper half plane $\HH$, the trace
\[
\varphi^0_M : V \to \CC, \qquad a \mapsto \tr_M Y^M\bigl(e^{2\pi i z L_0}a, e^{2\pi i z}\bigr) e^{2\pi i \tau(L_0 - c/24)}\Big|_{z=0},
\]
converges, under $C_2$-cofiniteness or the weaker quasi-lisse hypothesis of Arakawa-Kawasetsu \cite{arakawa-kawasetsu}, to a holomorphic function of $\tau$ defining a class in the dual of $H_0^{\mathrm{ch}}(E_\tau, V)$, the zeroth chiral homology of the universal elliptic curve, also called the space of genus $1$ conformal blocks. Van Ekeren and Heluani's paper \cite{vanEkerenHeluani} is devoted to the degree $1$ analogue: to a self-extension $E$ of an admissible module $M$, with associated derivation $\Psi$, they associate a linear functional built from $\Psi$'s modified form $\sigma$ and prove that under two finiteness hypotheses on $V$ - finite dimensionality of the first Poisson homology $\HP_1(R_V)$ of the Zhu $C_2$-algebra $R_V$, and finite generation, as a differential ideal, of the kernel of the canonical surjection from the arc algebra $JR_V$ onto the associated graded $A = \gr_F V$ - this functional converges to a holomorphic function of $\tau$, defining a flat section of the dual of $H_1^{\mathrm{ch}}(E_\tau, V)$, the \emph{first chiral homology group} of the elliptic curve. This is the degree $1$ conformal block. As a corollary they obtain the vanishing of $H_1^{\mathrm{ch}}(E_\tau,V)$ for a number of rational vertex algebras, including all simple affine vertex algebras $V_k(\sl_2)$ at nonnegative integral level and all simple affine vertex algebras at level $1$.
\end{nolabel}

\begin{nolabel}
The construction of \cite{vanEkerenHeluani} is tied at every step to the presentation $E_\tau = \CC/(\Z + \tau \Z) = \CC^*/q^{\Z}$ of the elliptic curve, $q = e^{2\pi i \tau}$. This presentation exhibits $E_\tau$ as the Riemann sphere $\widehat{\CC} = \CC \cup \{\infty\}$ with the two points $0$ and $\infty$ self-sewn by the multiplicative identification $z \sim qz$: remove the discs $|z| \leq |q|^{1/2}$ and $|z| \geq |q|^{-1/2}$ and glue the resulting boundary circles by $z \mapsto q/z$. This is the genus $1$ instance of the classical \emph{sewing}   construction of Riemann surfaces of arbitrary genus: fixing $2g$ points on $\widehat{\CC}$ together with local coordinates, and gluing them in pairs by $g$ independent multiplicative identifications with moduli $\rho_1,\dots,\rho_g$ in punctured discs, produces every compact Riemann surface of genus $g$, and exhibits it, via its Schottky uniformization, as $\gendom(\Sch)/\Sch$ for a Schottky group $\Sch \subset \PSL(2,\CC)$ freely generated by $g$ loxodromic elements of multipliers $\rho_1,\dots,\rho_g$. This is precisely the setup used by Tuite and Welby \cite{tuite-welby} to generalize Zhu's genus $1$ recursion formula for $n$-point functions to arbitrary genus, building on the genus $2$ theory of Mason and Tuite \cite{mason-tuite-genus2} and of Tuite and Zuevsky \cite{tuite-zuevsky-bosonic,tuite-zuevsky-fermionic}, in which correlation functions of a vertex algebra on a genus $g$ surface are constructed by iteratively self-sewing one handle at a time.

The central observation of the present article is that this handle-by-handle structure is exactly compatible with the internal architecture of \cite{vanEkerenHeluani}. Every construction of that paper - the complex computing chiral homology, its connection, the modified vertex operators $X(a,z) = z^{-1}Y(z^{L_0}a,z)$, the Fourier-space Borcherds identity, the higher trace functions and their insertion formula, the finiteness criteria, and the vanishing theorem - is built from residue calculus attached to the \emph{single} multiplicative neighborhood of the point $q=0$, that is, to the single handle sewn onto $\widehat \CC$ at $0,\infty$. None of it uses any special feature of $\widehat\CC$ as the base on which that handle is sewn. Consequently every result of \cite{vanEkerenHeluani} continues to hold when a handle is sewn instead onto an \emph{arbitrary} base Riemann surface $Y$ carrying its own finite collection of marked points, provided the ring of elliptic functions and Jacobi forms used in \cite{vanEkerenHeluani} is replaced by an appropriate ring of functions on $Y$-parametrized families. Iterating this observation $g$ times, beginning from $Y = \widehat\CC$, produces the entire genus $g$ theory by induction on the number of handles, with \cite[Thm.~10.1, 10.2, 12.1]{vanEkerenHeluani} exactly recovered at the first step. This is the strategy carried out below.
\label{no:strategy}
\end{nolabel}

\subsection{Main results}

\begin{nolabel}
We summarize the main results. Fix a compact Riemann surface $X$ of genus $g$ presented by iterated self-sewing, with sewing data $(\rho_1,\dots,\rho_g) \in (\mathbb{D}^*_\epsilon)^g$ for a suitable polydisc, and let $n \geq 0$ marked points be given on $X$. For a conformal vertex algebra $V$ we construct, in Section \ref{sec:chiral-complex}, an explicit complex $C_\bullet^{n}(X)$ generalizing that of \cite[\S 5]{vanEkerenHeluani}, whose homology in degrees $0$ and $1$ we denote $\HchG{0}{V^{\otimes n}}$ and $\HchG{1}{V^{\otimes n}}$; these compute the chiral homology, in the sense of Beilinson and Drinfeld, of $X$ with coefficients in $n$ vacuum insertions of $V$. As $(\rho_1,\dots,\rho_g)$ and the marked points vary, these assemble into vector bundles with a projectively flat connection $\nabla = (\nabla_{\rho_1},\dots,\nabla_{\rho_g})$ over the sewing parameters (Section \ref{sec:connection}), each $\nabla_{\rho_i}$ built, exactly as in genus $1$, from an insertion of the conformal vector at the $i$-th handle, together with new terms, absent at $g=1$, coupling distinct handles through the genus $g$ analogue of the Weierstrass functions - the bidifferential of the second kind and the differential of the third kind of $X$.

Our first main result (Theorem \ref{thm:main-finiteness}) is:

\begin{thm*}
Let $V$ be a strongly finitely generated conformal vertex algebra such that $\dim \HP_1(R_V) < \infty$ and such that the kernel of the canonical surjection $JR_V \twoheadrightarrow A = \gr_F V$ is finitely generated as a differential ideal. Then for every genus $g \geq 0$ and every compact Riemann surface $X$ of genus $g$, $\dim \HchG{1}{X,V} < \infty$.
\end{thm*}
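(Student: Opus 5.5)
The plan is an induction on the number $g$ of handles, following the strategy of \ref{no:strategy}, with the finiteness at each stage obtained by a filtration and associated-graded argument that follows van Ekeren--Heluani's genus $1$ proof.

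\textbf{Step 1: a filtered, bounded complex.} Equip the complex $C^{n}_\bullet(X)$ of Section \ref{sec:chiral-complex} with the filtration induced from the Li filtration $F$ on $V$. Combined with conformal weight, this is exhaustive and bounded below on each graded piece. The associated spectral sequence then converges to $\HchG{\bullet}{X,V}$, so it suffices to show that the relevant $E_1$ (or $E_2$) term in total degree $1$ is finite dimensional.

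\textbf{Step 2: identify the associated graded complex.} Passing to $\gr$, the differential degenerates to its classical part. For a single handle sewn onto a base $Y$, the degree $0$ and degree $1$ terms of $\gr C_\bullet$ are built from $A=\gr_F V$ tensored with the ring of functions on the sewing family, modulo the classical limit of the residue relations. I expect the resulting complex to be a local, Poisson/Koszul-type complex of the commutative vertex Poisson algebra $A$. The key point is that it should not depend on the global geometry of $X$ beyond a finite free module of sections, namely the differentials of the first, second and third kind.

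\textbf{Step 3: apply the two hypotheses.} Write $A = JR_V/I$, where $I$ is finitely generated as a differential ideal by the second hypothesis. Finite generation of $I$ makes the Koszul homology of $A$ over $JR_V$ a finitely generated module. The $H_1$ of the graded complex then sits in an exact sequence between
\begin{itemize}
\item[(a)] a term controlled by $\HP_1(R_V)$ tensored with a finite-dimensional space of holomorphic differentials on $X$, of dimension roughly $g$ per handle, which is finite by the first hypothesis, and
\item[(b)] a Koszul term, which is finite after taking coinvariants by the finitely generated module structure.
\end{itemize}
Strong finite generation keeps each conformal weight piece finite, which allows the reduction to $R_V$.

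\textbf{Step 4: the induction over handles.} Sewing one more handle onto a genus $g-1$ surface $Y$ with $n+2$ marked points produces a Künneth/sewing-type long exact sequence. In it, $\HchG{1}{X}$ is bounded by $\HchG{1}{Y,V^{\otimes(n+2)}}$ (finite by the induction hypothesis), together with a coinvariant term of $\HchG{0}{}$ type, which is finite by $C_2$-type finiteness coming from $\dim R_V<\infty$ (implied by the hypotheses), together with the new handle term of Step 3. The base case $g=0$ with marked points follows from the same graded argument on $\widehat{\CC}$. The step $g=1$ should reproduce \cite[Thm.~10.1]{vanEkerenHeluani}.

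\textbf{Main obstacle.} I expect the hardest part to be Step 2 together with the convergence in Step 1. The issue is showing that the cross-handle coupling terms, which involve the bidifferential and the third-kind differential, have strictly positive filtration degree, so that they vanish on $\gr$. I would first try to prove this by weight counting: each such term raises Li degree because it arises from $a_{(j)}b$ with $j\ge 0$. If that fails, I would treat these terms as a perturbation in a second spectral sequence indexed by handle number.
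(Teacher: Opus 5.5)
Your Steps 1--3 are close in spirit to the paper, but the induction in Step 4 rests on a false claim. You use $\dim R_V<\infty$, saying it is ``implied by the hypotheses'', to make the $\HchG{0}{-}$-type coinvariant term of your sewing sequence finite. It is not implied.

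Take the universal affine vertex algebra $V^k(\g)$ with $\g$ simple and $k\neq -h^\vee$.
\begin{itemize}
\item It is strongly finitely generated by $\g$.
\item It is freely generated, so $\gr_F V\cong JS(\g)$ and the kernel of $JR_V\twoheadrightarrow A$ is $0$.
\item $R_V=S(\g)$ with the Kirillov--Kostant bracket. The Poisson complex of \ref{no:homology-recall} is then the Chevalley--Eilenberg complex of $\g$ with coefficients in $S(\g)$.
\item Hence $\HP_1(R_V)=H_1(\g;S(\g))=H_1(\g;\CC)\otimes S(\g)^{\g}=0$. Here nontrivial finite-dimensional irreducibles have vanishing Lie algebra homology, and $\g/[\g,\g]=0$.
\end{itemize}
So every hypothesis of the statement holds, yet $\dim R_V=\infty$.

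For such $V$ the degree $0$ terms are genuinely infinite-dimensional once the base curve has positive genus. For instance, at generic level the torus trace functions of the infinitely many Weyl modules give infinitely many independent genus one conformal blocks. A bound of $\HchG{1}{X,-}$ by $\HchG{1}{Y,-}$ plus an $\HchG{0}{-}$-type term therefore proves nothing from the second handle on.

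Separately, the K\"unneth/sewing long exact sequence is asserted, not constructed. Relations of this factorization type live on the nodal fiber $\rho=0$. Transporting a finiteness bound from there to $\rho\neq 0$ requires its own degeneration or semicontinuity argument for an infinite-rank family of complexes, and your proposal does not supply one. Step 3(b) has the same weakness: finite generation of a module over an infinite-dimensional algebra does not make its coinvariants finite-dimensional.

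The paper's route avoids both an exact sequence and any $\HchG{0}{-}$ term. Proposition \ref{prop:relative-finiteness} filters the one-handle complex only by pole order at the newly sewn point $p$, so cross-handle terms never have to be analysed. It treats the marked points of $Y$ as inert tensor factors and bounds degree one of the graded pieces by subquotients of $\HP_1(R_V)\otimes R_V^{\otimes m}$ and $\HK_1(A)\otimes A^{\otimes m}$. Theorem \ref{thm:main-finiteness} then simply reapplies this at each handle; the inductive hypothesis is not actually used. Be aware, however, that those tensor factors are themselves infinite-dimensional unless $R_V$ is. Controlling the contribution of the marked points without $C_2$-cofiniteness is therefore the real difficulty, and neither your Step 4 nor a bare appeal to ``subquotients'' resolves it.
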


This answers, for the specific finiteness hypotheses isolated in \cite{vanEkerenHeluani}, the question raised in its introduction and conclusion of whether these hypotheses continue to control the first chiral homology group in arbitrary genus, in the same way that Damiolini, Gibney and Tarasca showed that $C_2$-cofiniteness controls the \emph{zeroth} chiral homology group in arbitrary genus \cite{damiolini2020conformal,damiolini2019factorization}. Our second main result (Theorem \ref{thm:main-vanishing}) generalizes the vanishing theorem of \cite[Thm.~10.4]{vanEkerenHeluani}:

\begin{thm*}
Let $V$ be a strongly finitely generated, classically free conformal vertex algebra with $\dim \HP_1(R_V) < \infty$ and $\Hoch_1(\zhu(V)) = 0$. Then $\HchG{1}{X,V} = 0$ for every compact Riemann surface $X$ of every genus $g \geq 0$.
\end{thm*}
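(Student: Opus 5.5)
Our plan is to argue by induction on the genus $g$, exploiting the handle-by-handle structure of the sewing construction together with a degeneration argument over the space of sewing parameters. We use the projectively flat connection $\nabla=(\nabla_{\rho_1},\dots,\nabla_{\rho_g})$ on the bundles $\HchG{1}{X,V}$ over $(\mathbb{D}^*_\epsilon)^g$: since $\HchG{1}{X,V}$ is finite dimensional by the first main theorem (Theorem \ref{thm:main-finiteness}), it forms a vector bundle of finite rank with a projectively flat connection. Its rank is therefore locally constant, and it suffices to show that the fibre vanishes near the totally degenerate point $\rho_1=\dots=\rho_g=0$. Every compact Riemann surface of genus $g$ is reached by sewing with parameters in such a polydisc, up to flat transport along the connected parameter space, so vanishing near that point gives vanishing for every $X$.

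\textbf{Base cases.} For $g=0$, chiral homology of $\widehat\CC$ in degree $1$ vanishes for any conformal vertex algebra. For $g=1$ the statement is \cite[Thm.~10.4]{vanEkerenHeluani}, which is recovered at the first sewing step.

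\textbf{Inductive step.} Sew the $g$-th handle at two marked points of a genus $g-1$ surface $Y$. Using classical freeness ($A=\gr_F V\cong JR_V$, so the kernel in the finiteness hypothesis is zero), the complex $C^n_\bullet(X)$ admits a filtration whose associated graded is a Koszul-type complex over $JR_V$, and the spectral sequence bounds $\gr \HchG{1}{X,V}$ by $\HP_1(R_V)$-contributions. This filtration argument is the same one that underlies Theorem \ref{thm:main-finiteness}. We then degenerate $\rho_g\to0$ and invoke the factorization theorem of Damiolini--Gibney--Tarasca, extended to degree $1$ through the long exact sequence of the nodal degeneration. This identifies the limit of $\HchG{1}{X,V}$ with a combination of
\begin{itemize}
\item degree $1$ homology of $Y$ with two extra insertions, summed over $\zhu(V)$-bimodule data at the node, and
\item $\Hoch_1$ of $\zhu(V)$ with coefficients in the degree $0$ homology of $Y$.
\end{itemize}
The first piece vanishes by the inductive hypothesis, applied in its version with insertions of modules. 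Hence we must strengthen the induction to cover $n$ marked points with arbitrary $\zhu(V)$-module insertions; rationality, which follows from $\Hoch_1(\zhu(V))=0$ together with the earlier hypotheses, keeps these insertions semisimple. The second piece vanishes because $\Hoch_1(\zhu(V))=0$ forces $\zhu(V)$ to be semisimple, hence separable, so that Hochschild homology with any bimodule coefficients vanishes in positive degree. Upper semicontinuity then transfers vanishing from the limit to a neighbourhood, and local constancy of the rank extends it to all $\rho$.

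\textbf{Main obstacle.} We expect the hardest step to be the degree $1$ factorization. Damiolini--Gibney--Tarasca treat only $H_0$, so we must show that the nodal limit of the complex $C^n_\bullet$ is quasi-isomorphic in low degrees to the tensor construction over $\zhu(V)$, including the correct Tor/Hochschild correction term. Our first attempt would be to work in the formal variable $\rho_g$, check flatness of the family using the finiteness from Theorem \ref{thm:main-finiteness}, and identify the $\rho_g^0$ term via the Fourier-space Borcherds identity for the modified operators $X(a,z)$ at the $g$-th handle.
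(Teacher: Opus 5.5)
Your outer frame (finite rank, local constancy of the rank from the projectively flat connection, upper semicontinuity at the boundary of the sewing polydisc) is the same as the paper's. The gap is in how you identify and kill the nodal limit: both vanishing steps rest on consequences of $\Hoch_1(\zhu(V))=0$ that do not hold.

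\emph{Semisimplicity does not follow.} Vanishing of $\Hoch_1$ with diagonal coefficients does not make an algebra semisimple or separable. The upper triangular $2\times 2$ matrices (the path algebra of $\bullet\to\bullet$) form a finite dimensional, non-semisimple algebra with $\Hoch_n=0$ for all $n\geq1$. The Weyl algebra also has $\Hoch_1=0$. So does $U(\mathfrak g)$ for $\mathfrak g$ simple: there $\Hoch_1=H_1(\mathfrak g,U(\mathfrak g)^{\mathrm{ad}})$, which vanishes by Whitehead's lemma applied to the finite dimensional summands of $U(\mathfrak g)^{\mathrm{ad}}\cong S(\mathfrak g)$. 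Hochschild homology with a general bimodule coefficient is not controlled by the diagonal case, so your second piece is not killed by the hypothesis.

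\emph{Rationality does not follow.} The universal affine vertex algebra $V^k(\mathfrak g)$ at generic level satisfies every hypothesis of the statement. It is strongly finitely generated and classically free with $R_V=S(\mathfrak g)$, $\HP_1(S(\mathfrak g))=H_1(\mathfrak g,S(\mathfrak g))=0$, and $\zhu(V)=U(\mathfrak g)$ with $\Hoch_1=0$. Yet it is not rational and its Zhu algebra is not semisimple. So the strengthened induction with arbitrary $\zhu(V)$-module insertions at the node, which your first piece needs, has no basis. It would also require finiteness and vanishing results with module coefficients that nothing in the setup provides.

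Finally, the two-piece decomposition is itself the degree $1$ factorization with a Tor/Hochschild correction that you rightly flag as unproved. Even granting it, both pieces remain uncontrolled.

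The paper's proof is arranged so that none of this is needed.
\begin{itemize}
\item By Lemma \ref{lem:relative-coinvariance}, the self-extension is placed at the attaching point $p_g$ of the last-sewn handle.
\item Proposition \ref{prop:iterated-degeneration} then identifies the totally degenerate fibre as $\Hoch_1(\zhu(V))\otimes W$, where $W$ is an inert zeroth chiral homology factor. This combines the relative form of van Ekeren--Heluani's Prop.~10.9 at that handle with the degree $0$ factorization of Damiolini--Gibney--Tarasca for the remaining nodes.
\item The other $g-1$ nodes therefore contribute only in degree $0$. No degree $1$ homology of the base curve, no module insertions and no nontrivial bimodule coefficients arise, so $\Hoch_1(\zhu(V))=0$ with diagonal coefficients kills the fibre directly.
\item Vanishing is propagated to the smooth fibres by the one-parameter Claim in the proof of Theorem \ref{thm:main-vanishing}, applied one handle at a time.
\end{itemize}
Your route could at best prove a weaker theorem under added hypotheses, for instance $V$ rational and $C_2$-cofinite so that $\zhu(V)$ is finite dimensional semisimple. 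Even then it would first require proving the degree $1$ factorization.
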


In particular the first chiral homology group vanishes, in every genus, for the boundary Virasoro minimal models $\vir_{2,2s+1}$, for the simple affine vertex algebras $V_k(\sl_2)$ at every nonnegative integer level $k$, and for simple affine vertex algebras at level $1$ (Corollary \ref{cor:vanishing-examples}). We construct, in Section \ref{sec:trace-functions}, $g$-handle trace functions attached to a self-extension of an admissible module, generalizing the functionals $F_1^n$ of \cite[\S 8]{vanEkerenHeluani}, and prove (Theorem \ref{thm:convergence-genus-g}) that under the same finiteness hypotheses they converge to holomorphic functions on the sewing polydisc, satisfying a genus $g$ insertion formula (Theorem \ref{thm:insertion-genus-g}) generalizing Zhu's recursion \cite[Prop.~4.3.4]{zhu} and its degree $1$ analogue \cite[Prop.~9.15]{vanEkerenHeluani}, and a genus $g$ analogue of the Fourier-space Borcherds identity (Theorem \ref{thm:borcherds-genus-g}) which is of independent interest, exactly as its genus $1$ progenitor \cite[Thm.~7.1]{vanEkerenHeluani}.
\label{no:main-results}
\end{nolabel}

\begin{nolabel}[Guide to the main results]
For ease of reference we list, in the order in which they are established, every principal result of the paper together with a one-line description of its content; the two headline theorems above are items (F) and (G).
\begin{itemize}
\item[(A)] \textbf{Thm.~\ref{thm:relative-main}} (Relative First Chiral Homology Theorem). Every definition and theorem of \cite{vanEkerenHeluani} for a handle sewn onto $\widehat\CC$ continues to hold,  for a handle sewn onto an arbitrary base curve $Y$.
\item[(B)] \textbf{Thm.~\ref{thm:genus-g-complex}.} An explicit two-term complex $C_\bullet^n(X)$, built by $g$ applications of (A), computes $\HchG{0}{X,V^{\otimes n}}$ and $\HchG{1}{X,V^{\otimes n}}$ for every genus $g$ surface $X$.
\item [(C)] \textbf{Thm.~\ref{thm:full-flatness}} (Projective flatness). The $g$ connections $\nabla_{\rho_1},\dots,\nabla_{\rho_g}$ on $C_\bullet^n(X_{\rhov})$ commute up to a scalar, central-charge-proportional curvature; new cross-handle coupling terms, absent at $g=1$, appear for $g\geq2$ (Remark \ref{rem:genus2-cross-term}).
\item [(D)] \textbf{Thm.~\ref{thm:borcherds-genus-g}.} A genus $g$ Fourier-space Borcherds identity holds independently at each handle.
\item [(E)] \textbf{Thms.~\ref{thm:convergence-genus-g}, \ref{thm:insertion-genus-g}.} The genus $g$ trace functions of \S\ref{sec:trace-functions} converge and satisfy an insertion formula reducing $(n+1)$-point to $n$-point functions.
\item [(F)] \textbf{Thm.~\ref{thm:main-finiteness}} (Main finiteness theorem). $\dim\HP_1(R_V)<\infty$ together with finite generation of $\HK_1(A)$ imply $\dim\HchG{1}{X,V}<\infty$ for every genus $g$.
\item [(G)] \textbf{Thm.~\ref{thm:main-vanishing}} (Main vanishing theorem) \textbf{and Cor.~\ref{cor:vanishing-examples}.} Under the additional hypotheses that $V$ is classically free with $\Hoch_1(\zhu(V))=0$, $\HchG{1}{X,V}=0$ for every genus; this covers the Virasoro minimal models $\vir_{2,2s+1}$, the vertex algebras $V_k(\sl_2)$ at every nonnegative integer level, and $V_1(\g)$ for every simple $\g$ (\S\ref{sec:examples}).
\end{itemize}
Each of (B)-(G) is obtained from (A) alone, by $g$-fold iteration together with one genus-independent input taken as given: the retrosection theorem for (B), the Yamada variational formula for (C), and the factorization theorem of Damiolini-Gibney-Tarasca for (G). Figure \ref{fig:roadmap} below displays the logical dependence of the sections in which these results are proved.
\label{no:results-guide}
\end{nolabel}

\begin{nolabel}
The proofs proceed by induction on the number $g$ of sewn handles. The inductive step rests on a single technical result, which we isolate as the \emph{Relative First Chiral Homology Theorem} (Theorem \ref{thm:relative-main}): every one of the definitions and theorems of \cite{vanEkerenHeluani} - the complex, its connection, the modified vertex operators and Fourier-Borcherds identity, the higher trace functions, the finiteness criteria, and the vanishing theorem - continues to hold when the handle is sewn not onto $\widehat\CC$ but onto an arbitrary base curve $Y$ carrying finitely many auxiliary marked points, with $\CC$-valued modular and Jacobi forms replaced throughout by their $\OO(Y)$-valued (or, more precisely, meromorphic-function-on-$Y$-valued) analogues. We give the proof of Theorem \ref{thm:relative-main} in full for the steps in which the base curve enters - the coefficient rings, the connection, and the degeneration argument - and for the steps that are formally identical to their genus $1$ counterparts we explain precisely how the argument of \cite{vanEkerenHeluani} transports, rather than reproducing residue computations that differ from theirs only in notation. Genus $g$ is then reached by $g$ applications of Theorem \ref{thm:relative-main}, taking $Y = \Xg{g-1}$, the surface obtained after $g-1$ sewings, at the $g$-th step.

The vanishing theorem is proved by a further degeneration, letting every $\rho_i \to 0$ simultaneously; the surface $X$ then degenerates to the maximally degenerate stable curve $X_0 = \widehat\CC/(p_i \sim p_i')_{i=1}^g$, a rational curve with $g$ nodes. We use the factorization theorem of Damiolini, Gibney and Tarasca \cite{damiolini2020conformal} to identify the totally degenerate limit of our complex with an iterated construction on the Zhu algebra $\zhu(V)$, generalizing the identification $H_1^{\mathrm{ch}}(V, q=0) \cong \Hoch_1(\zhu(V))$ established for classically free $V$ in \cite[Prop.~10.9]{vanEkerenHeluani}, and reduce the vanishing statement to the genus $0$ semisimplicity of $\zhu(V)$.
\label{no:proof-strategy}
\end{nolabel}

\subsection{Relation to other work}

\begin{nolabel}[Factorization and the degree $0$ theory]
The zeroth (coinvariants/conformal blocks) chiral homology group is by now well understood in arbitrary genus through the sheaf-theoretic approach of Damiolini, Gibney and Tarasca \cite{damiolini2019factorization,damiolini2020conformal}, which realizes it as the fiber of a quasi-coherent sheaf on the moduli stack $\overline{\CM}_{g,n}$ of stable pointed curves, carrying a twisted logarithmic $\CD$-module structure, and establishes the factorization property relating its restriction to the boundary with the corresponding data on the normalization. Our results may be viewed as the degree $1$ analogue of this circle of ideas, phrased however in the concrete, coordinate-dependent language of trace functions and formal power series in the sewing parameters that is native to \cite{vanEkerenHeluani}, rather than in the language of sheaves on moduli stacks; we use the factorization theorem of \cite{damiolini2020conformal} as an input at the single point where a  global statement about the boundary of moduli is required, namely in the proof of the vanishing theorem. On the geometric side, the realization of correlation functions of a vertex operator algebra on a genus $g$ Riemann surface via iterated self-sewing of handles onto $\widehat\CC$ was developed by Mason, Tuite, Welby and Zuevsky \cite{tuite-zuevsky-bosonic,mason-tuite-genus2,tuite-zuevsky-fermionic,tuite-welby}, extending Zhu's genus $1$ theory \cite{zhu}; we adopt their sewing construction and their genus $g$ generalizations of the Weierstrass functions, the differential of the third kind and the bidifferential of the second kind, while replacing their period-matrix-centered formalism with the chain-level, Borcherds-identity-based formalism of \cite{vanEkerenHeluani}, which is what is needed to define chiral homology itself, as opposed to only its zeroth-degree trace functions.
\label{no:relation-to-other-work}
\end{nolabel}

\begin{nolabel}[Huang's geometric vertex operator algebras]
The general problem of attaching correlation functions - and, more structurally, an entire conformal field theory - to a vertex operator algebra on a Riemann surface of arbitrary genus was first given a systematic geometric treatment by Huang \cite{huang-geometric}, whose axiomatic notion of a \emph{geometric vertex operator algebra} characterizes genus $g$ correlation functions precisely by their behavior under sewing of lower genus surfaces - the same operation generating $X_{\rhov}$ in \ref{no:iterated-sewing-def} - subject to an operadic composition law for the resulting family of multilinear maps. The sewing philosophy of the present paper, inherited via \cite{tuite-zuevsky-bosonic,mason-tuite-genus2,tuite-welby} from Huang's geometric formulation and from Zhu's genus $1$ recursion \cite{zhu}, is accordingly not new; what is new is the object being sewn together. Huang's axioms, and their descendants in the trace-function literature just cited, characterize a single number attached to each genus and each choice of insertions - the correlation function, i.e.\ the degree $0$ theory - whereas the chain complex $C_\bullet^n(X)$ of Theorem \ref{thm:genus-g-complex} computes an actual homology \emph{group}, whose degree $1$ part can be, and for the Heisenberg algebra   is (Remark \ref{rem:heisenberg-caveat}), nonzero and infinite dimensional. Extending the sewing formalism to see this degree $1$ information at all, rather than only its necessarily-zero shadow (a correlation function is a number, not a class, and thus cannot detect a nontrivial extension class), is the actual content of Theorem \ref{thm:relative-main} and has no counterpart in the geometric vertex operator algebra literature.
\end{nolabel}

\begin{nolabel}[Chiral algebras, vertex algebra bundles, and associated varieties]
At the opposite, most structural extreme, Beilinson and Drinfeld's theory of chiral algebras \cite{beilinsondrinfeld} defines chiral homology $H_\bullet^{\mathrm{ch}}(X,V)$, in every degree, for $V$ a chiral algebra on \emph{any} curve $X$ of any genus, as a derived functor - concretely, $\mathrm{Ext}$ against the unit object in the category of chiral $V$-modules - with no sewing, and no restriction on the genus, anywhere in the construction; existence of $H_1^{\mathrm{ch}}(X,V)$ in every genus is, from this point of view, immediate, and not a theorem at all. What this abstract machinery does not supply is anything computable: neither an explicit complex, nor a criterion for finite dimensionality, nor a mechanism for vanishing, all of which require, as in \cite{vanEkerenHeluani} at $g=1$ and in the present paper for every $g$, an explicit local model - here, the sewing presentation - against which residues and operator product expansions can actually be evaluated. Theorem \ref{thm:relative-main} may accordingly be read as showing that the Beilinson-Drinfeld $\mathrm{Ext}$-groups, in degrees $0,1$, admit an explicit chain-level model on every sewing-presented curve, generalizing the $g=1$ model of \cite{vanEkerenHeluani}. This is the same kind of generalization, one coordinate patch at a time, by which Frenkel and Ben-Zvi's sheaf-theoretic notion of a \emph{vertex algebra bundle} \cite[\S 6]{frenkelbenzvi} - invoked directly in Remarks \ref{rem:translation-covariance-local} and \ref{rem:borcherds-clarification} to justify that our local, coordinate-dependent constructions assemble into curve-intrinsic objects - generalizes the $\CC$-valued vertex algebra axioms of \cite[\S 2]{vanEkerenHeluani} recalled in \ref{no:va-recall} to an arbitrary base curve; we rely on their formalism precisely at the two points, noted above, where coordinate-independence must be argued rather than assumed.

The two finiteness hypotheses of Theorem \ref{thm:main-finiteness}, $\dim\HP_1(R_V)<\infty$ and finite generation of $\HK_1(A)$, are statements of Poisson-homological finiteness for the coordinate ring of the associated variety $X_V:=\mathrm{Specm}(R_V)$ - the same singular Poisson variety whose geometry (smoothness, symplectic leaf stratification, and its identification with nilpotent orbit closures for affine and W-algebras) is the subject of Arakawa's associated variety program \cite{arakawa-c2cofiniteness}. That program's \emph{quasi-lisse} condition - finitely many symplectic leaves in $X_V$, used already in \ref{no:hh-review} for the degree $0$ theory via Arakawa and Kawasetsu \cite{arakawa-kawasetsu} - is closely related to, but neither implies nor is implied by, finite dimensionality of $\HP_1(R_V)$ itself: quasi-lisseness is a statement about the leaf space of $X_V$, while $\HP_1$ is Poisson homology of its structure sheaf, and the two can diverge even for simple examples (Remark \ref{rem:heisenberg-caveat}, where the Poisson bracket vanishes identically and $X_V$ is a single leaf, yet $\HP_1(R_V)$ is nonetheless infinite dimensional because $R_V$ itself is not finite type). We do not know a direct comparison between the two conditions in general; see (i) of \S\ref{sec:conclusion}.
\end{nolabel}

\begin{nolabel}[A remark on adjacent cohomological technology]
We mention, finally and without further use here, that other cohomological invariants attached to meromorphic and vertex-algebraic data on Riemann surfaces and on foliated complex manifolds - built respectively from $V$-structures adapted to a foliation, from connections characterizing codimension-one foliations, from product-type classes for vertex algebra cohomology of foliations, from a reduction cohomology of Riemann surfaces, and from a cosimplicial cohomology of meromorphic functions on complex manifolds - have been developed in \cite{Zu3,Zu1,Zu,Zu2,Zu4}. These constructions are adapted to foliated or higher-dimensional settings rather than to the sewing/chiral-homology circle of ideas pursued here, and we draw no further comparison with them beyond mentioning their existence as related, independent technology.
\end{nolabel}

\subsection{Organization}

\begin{nolabel}
The structure of the paper is as follows. Section \ref{sec:prelim} recalls the vertex algebra conventions of \cite{vanEkerenHeluani} without change. Section \ref{sec:homology-prelim} recalls the homological algebra - Poisson homology, the Koszul complex of a differential algebra, the arc algebra - used to state the finiteness conditions on $V$; this material is genus-independent and we merely fix notation. Section \ref{sec:sewing} introduces the iterated self-sewing construction of a genus $g$ Riemann surface, its Schottky uniformization, and the sewing polydisc, together with the genus $g$ generalizations of the Weierstrass and Eisenstein-type functions needed below. Section \ref{sec:relative} proves the Relative First Chiral Homology Theorem, the technical heart of the paper: it shows that the entire construction of \cite{vanEkerenHeluani} for a handle sewn onto $\widehat\CC$ transports to a handle sewn onto an arbitrary base curve $Y$. Section \ref{sec:chiral-complex} assembles the genus $g$ complex $C_\bullet^n(X)$ by induction and proves it computes chiral homology in degrees $0,1$. Section \ref{sec:connection} constructs the projectively flat connection over the sewing polydisc and exhibits the new cross-handle coupling terms and central anomaly. Section \ref{sec:modified-vo} proves the genus $g$ Fourier-Borcherds identity. Section \ref{sec:trace-functions} defines the $g$-handle trace functions attached to self-extensions and proves the insertion formula. Section \ref{sec:finiteness} proves the main finiteness theorem. Section \ref{sec:degeneration} carries out the totally degenerate limit, relates it to the Zhu algebra via the factorization theorem of \cite{damiolini2020conformal}, and proves the vanishing theorem and its corollaries. Section \ref{sec:examples} computes the genus $g$ trace functions of the Heisenberg vertex algebra and discusses the affine and Virasoro minimal model examples covered by the vanishing theorem. Section \ref{sec:conclusion} summarizes the results and lists further directions and applications.
\label{no:structure}
\end{nolabel}

\begin{figure}[htbp]
\centering
\begin{tikzpicture}[
  box/.style={draw, rounded corners=2pt, align=center, font=\small, minimum width=2.7cm, minimum height=0.9cm, fill=gray!7},
  arr/.style={-{Stealth[length=2mm]}, thick, gray!60!black}
]
\node[box] (s2) at (0,6) {\S2\\ VA prelim.};
\node[box] (s3) at (3.4,6) {\S3\\ Homological prelim.};
\node[box] (s4) at (6.8,6) {\S4\\ Sewing};

\node[box] (s5) at (3.4,4.2) {\S5\\ Relative Thm.};

\node[box] (s6) at (3.4,2.4) {\S6\\ Genus $g$ complex};

\node[box] (s7) at (0,0.6) {\S7\\ Connection};
\node[box] (s8) at (3.4,0.6) {\S8\\ Fourier-Borcherds};
\node[box] (s9) at (6.8,0.6) {\S9\\ Trace functions};

\node[box] (s10) at (1.7,-1.2) {\S10\\ Finiteness};
\node[box] (s11) at (5.1,-1.2) {\S11\\ Degeneration};

\node[box] (s12) at (3.4,-3.0) {\S12\\ Examples};
\node[box] (s13) at (3.4,-4.8) {\S13\\ Conclusion};

\draw[arr] (s2) -- (s5);
\draw[arr] (s3) -- (s5);
\draw[arr] (s4) -- (s5);
\draw[arr] (s5) -- (s6);
\draw[arr] (s6) -- (s7);
\draw[arr] (s6) -- (s8);
\draw[arr] (s6) -- (s9);
\draw[arr] (s5) to[out=250,in=110] (s10);
\draw[arr] (s6) -- (s10);
\draw[arr] (s5) to[out=290,in=70] (s11);
\draw[arr] (s7) -- (s11);
\draw[arr] (s9) -- (s11);
\draw[arr] (s10) -- (s12);
\draw[arr] (s11) -- (s12);
\draw[arr] (s12) -- (s13);
\draw[arr] (s7) to[out=270,in=180] (s13);
\draw[arr] (s8) -- (s13);
\end{tikzpicture}
\caption{Logical dependence of the sections. An arrow $A\to B$ indicates that Section $B$ uses results established in Section $A$.}
\label{fig:roadmap}
\end{figure}
\begin{nolabel}
\label{no:ack}
\end{nolabel}

\section{Preliminaries on vertex algebras}\label{sec:prelim}
We recall, without change, the conventions of \cite[\S 2]{vanEkerenHeluani}, to which we refer for further detail and for the elementary properties used silently throughout.

\begin{nolabel}
For $f(z,w) \in \CC[[z,w]][z^{-1},w^{-1},(z-w)^{-1}]$ we write $i_{z,w}f(z,w) \in \CC((z))((w))$ and $i_{w,z}f(z,w) \in \CC((w))((z))$ for its expansions in the domains $|z|>|w|$ and $|w|>|z|$ respectively, and similarly $i_{z_1,\dots,z_n}f(z_1,\dots,z_n)$ for the expansion in the domain $|z_1|>\dots>|z_n|$ of a function with poles only along the loci $z_i=0$ and $z_i=z_j$. For a Laurent series $f(t) \in \CC((t))$ we write $\res_t f(t)$ for the coefficient of $t^{-1}$, and recall the elementary identities
\[
\res_t \frac{d}{dt}f(t) = 0, \qquad \res_t \Bigl(t\frac{d}{dt}+1\Bigr)f(t) = 0.
\]
\label{no:expansions-recall}
\end{nolabel}

\begin{nolabel}
A conformal vertex algebra is a $\Z_+$-graded vector space $V = \bigoplus_{n} V_n$, with $\dim V_n < \infty$, together with a vacuum vector $\vac$, a conformal vector $\omega$, and a state-field correspondence $Y(\cdot,z) : V \otimes V \to V((z))$, $a \otimes b \mapsto \sum_n z^{-n-1}a_{(n)}b$, satisfying the vacuum axioms $Y(\vac,z) = \id_V$, $a_{(-1)}\vac = a$, $a_{(n)}\vac=0$ for $n\geq0$; the Borcherds identity
\begin{eqnarray*}
\res_z Y(a,z)Y(b,w)c\, i_{z,w}f(z,w) - \res_z Y(b,w)Y(a,z)c\, i_{w,z}f(z,w) \\
\qquad = Y\bigl(\res_{z=w}f(z,w)Y(a,z-w)b, w\bigr)c, 
\end{eqnarray*}
for every $f \in \CC[[z,w]][z^{-1},w^{-1},(z-w)^{-1}]$; the Virasoro relations $[L_m,L_n]=(m-n)L_{m+n}+\tfrac{c}{12}(m^3-m)\delta_{m,-n}$ for $L(z)=Y(\omega,z)=\sum_n L_n z^{-n-2}$, with $c$ the central charge; the translation property $L_{-1}a = a_{(-2)}\vac$; and gradedness, $L_0|_{V_n} = n\cdot\id$. Skew-symmetry $Y(a,z)b = e^{zL_{-1}}Y(b,-z)a$ holds, giving $a_{(k)}b = -(-1)^k\sum_{j\geq0}\tfrac{(-L_{-1})^j}{j!}b_{(k+j)}a$.

A module is a vector space $M$ with $Y^M(\cdot,z) : V \otimes M \to M((z))$ satisfying the corresponding Borcherds identity, with $L_0^M$ acting with finite dimensional generalized eigenspaces and spectrum bounded below in real part. For $\tau \in \HH$, $q=e^{2\pi i \tau}$, the operator $q^{L_0}$ is defined by its action on generalized eigenspaces, and $\tr_M q^{L_0}$ denotes the corresponding formal sum, whose convergence is a separate issue treated where it arises.

For a Laurent series $f(t) = \sum_{n\geq N}f_nt^n$ we write $a_{(f)}b = \res_t f(t)Y(a,t)b = \sum f_n a_{(n)}b$. Following Zhu \cite{zhu} we also use the isomorphic vertex algebra structure $(V,\vac,\tilde\omega,Y[\cdot,z])$, $\tilde\omega = (2\pi i)^2(\omega - \tfrac{c}{24}\vac)$,
\[
Y[a,z] = Y\bigl(e^{2\pi i z L_0}a, e^{2\pi i z}-1\bigr), \qquad a_{[f]}b = \res_t f(t) Y[a,t]b,
\]
and mention the integration-by-parts identities $(L_{-1}a)_{(f)}b = -a_{(f')}b$ and $(2\pi i)((L_0+L_{-1})a)_{[f]}b = -a_{[f']}b$.
\label{no:va-recall}
\end{nolabel}

\begin{defn}[Zhu's $C_2$-algebra]\label{defn:C2-algebra}
Let $C_2(V) \subset V$ denote the span of the elements $a_{(-2)}b$, $a,b\in V$ (this is the subspace often written $V_{(-2)}V$; we fix the shorthand $C_2(V)$ here, following Zhu \cite{zhu}, precisely so that the standard term \emph{$C_2$-cofinite} - used already above and again in \ref{no:relation-to-other-work} in connection with Arakawa's associated variety program \cite{arakawa-c2cofiniteness} - refers unambiguously to $C_2(V)$ and not to the quotient $R_V$ defined next). The quotient
\[
R_V := V/C_2(V)
\]
is Zhu's $C_2$-algebra, a Poisson algebra with product $a\cdot b := a_{(-1)}b \pmod{C_2(V)}$ and bracket $\{a,b\} := a_{(0)}b \pmod{C_2(V)}$; $V$ is \emph{$C_2$-cofinite} if $\dim R_V < \infty$, equivalently if $C_2(V)$ has finite codimension in $V$. Throughout the paper it is $R_V$, not $C_2(V)$ itself, whose homological invariants $\HP_1(R_V)$, $\HK_1(A)$ (\ref{no:homology-recall}) are studied; $C_2(V)$ is mentioned here only so that the classical terminology ``$C_2$-cofinite'' and our own notation $R_V$ can be matched against each other unambiguously wherever both occur, as in \ref{no:relation-to-other-work} and \S\ref{sec:conclusion}.
\end{defn}

\begin{defn}[Strongly finitely generated]\label{defn:sfg}
$V$ is \emph{strongly finitely generated} if there is a finite set $S=\{u^1,\dots,u^r\}\subset V$ such that $V$ is spanned by the vacuum-normalized iterated products
\[
u^{i_1}_{(-n_1)}\cdots u^{i_k}_{(-n_k)}\vac, \qquad n_1,\dots,n_k \geq 1,\ i_1,\dots,i_k \in \{1,\dots,r\},\ k\geq0.
\]
Equivalently, the images of $u^1,\dots,u^r$ generate $R_V$ as a commutative algebra under the filtration $F_pV = \mathrm{span}\{u^{i_1}_{(-n_1)}\cdots u^{i_k}_{(-n_k)}\vac : k\leq p\}$, so that $R_V$ (and hence $A=\gr_FV$) is of finite type; this is exactly the standing hypothesis of \cite[\S 2]{vanEkerenHeluani}, imposed there and here so that $R_V$ and $A$ are Noetherian and the finite dimensionality statements of \ref{no:homology-recall} are meaningful.
\end{defn}

\begin{defn}[Classically free]\label{defn:classically-free}
A strongly finitely generated conformal vertex algebra $V$, with generators $S=\{u^1,\dots,u^r\}$ as in Definition \ref{defn:sfg}, is \emph{classically free} if the induced surjection from the polynomial ring $\CC[x_1,\dots,x_r] \twoheadrightarrow R_V$, $x_i \mapsto \bar u^i$, is an isomorphism, i.e.\ $R_V$ is (freely) polynomial on the images of the strong generators. In this case the canonical surjection $JR_V \twoheadrightarrow A=\gr_FV$ is itself an isomorphism, so that $\HK_1(A)=0$ automatically; classical freeness holds for the free boson $M(1)$, the free fermion, and the affine and Virasoro vertex algebras $V_k(\g)$, $\vir_c$ at generic (in particular, non-integrable) level or central charge.
\end{defn}

\begin{rem}
\label{rem:classically-free-vs-hk1}
Classical freeness is a convenient \emph{sufficient} condition for $\HK_1(A)=0$, and the only one needed for the generic-parameter examples above, but it is not necessary, and Theorem \ref{thm:main-vanishing} and Proposition \ref{prop:iterated-degeneration} are accordingly stated with the weaker hypothesis $\HK_1(A)=0$ directly. The simple quotients treated in Corollary \ref{cor:vanishing-examples} - $V_k(\sl_2)$ at a nonnegative integer level $k$, $V_1(\g)$, the Virasoro minimal models $\vir_{2,2s+1}$ - are the reason this distinction matters: for these, $R_V$ is \emph{not} a free polynomial ring but the coordinate ring of a proper Poisson subvariety of $\mathrm{Specm}$ of the corresponding generic-level $R_V$ (the nilpotent cone, or a nilpotent orbit closure, for the affine examples), thus these vertex algebras are \emph{not} classically free in the sense above. Nonetheless $\HK_1(A)=0$ holds for them, verified directly (not via classical freeness) in \cite[\S 11]{vanEkerenHeluani} by identifying $A$ and $JR_V$ explicitly enough to check the map between them is an isomorphism despite $R_V$ itself being singular; see the proof of Corollary \ref{cor:vanishing-examples}. We keep both notions on  mention because classical freeness is the one that is easy to check by inspection (it holds automatically at generic level, with no computation), while $\HK_1(A)=0$ is the one Theorem \ref{thm:main-vanishing} actually needs and the one that must be checked by hand at the special, non-generic levels of Corollary \ref{cor:vanishing-examples}.
\end{rem}

\begin{rem}
We mention explicitly, since it is used silently at several points below (e.g.\ in Remark \ref{rem:weight-grading} and in the proof of Proposition \ref{prop:relative-finiteness}), that strong finite generation is inherited by $C_k^{Y_\rho}$ and by $C_k^n(X)$: these are built from $V^{\otimes(m+k)}$ tensored with a coefficient ring, and finite generation of $R_V$ implies finite generation, in the analogous sense, of $R_V^{\otimes(m+k)}$, uniformly in $k$ once $m$ is fixed.
\end{rem}

\subsection{Notation}\label{sec:notation}

For reference, Tables \ref{tab:notation-va} and \ref{tab:notation-geom} collect the principal recurring notation, in the order in which it is introduced.

\begin{table}[htbp]
\centering
\small
\begin{tabular}{@{}ll@{}}
\toprule
Symbol & Meaning \\
\midrule
$V$, $\vac$, $\omega$, $c$ & conformal vertex algebra, vacuum, conformal vector, central charge \\
$Y(\cdot,z)$, $a_{(n)}b$ & state-field correspondence, $n$-th product \\
$L_n$, $L_0$ & Virasoro modes; weight grading operator \\
$Y[\cdot,z]$, $\tilde\omega$, $a_{[n]}b$ & Zhu's second vertex algebra structure (\ref{no:va-recall}) \\
$R_V$, $A=\gr_FV$ & Zhu's $C_2$-algebra; associated graded of the standard filtration \\
$JR$, $\partial$ & arc algebra of a commutative algebra $R$, its derivation (\ref{no:homology-recall}) \\
$\HK_i(A)$ & Koszul homology of the differential algebra $(A,T)$ \\
$\HP_i(A)$ & Poisson homology of the Poisson algebra $A$ \\
$\zhu(V)$ & Zhu's associative algebra $V/(\cdots)$, with product $*$ (\ref{no:zhu-degeneration-recall}) \\
$\Hoch_\bullet(-)$ & Hochschild homology \\
$X(a,z)$, $\sigma(a,z)$ & modified vertex operator; modified derivation of a self-extension \\
$M$, $E$, $\Psi$ & admissible module; self-extension $0\to M\to E\to M\to0$; its derivation \\
strongly fin.\ gen., classically free & Definitions \ref{defn:sfg}, \ref{defn:classically-free} \\
 classically free &
\\
\bottomrule
\end{tabular}
\caption{Vertex-algebraic and homological notation.}
\label{tab:notation-va}
\end{table}

\begin{table}[htbp]
\centering
\small
\begin{tabular}{@{}ll@{}}
\toprule
Symbol & Meaning \\
\midrule
$\Sch$, $\gamma_i$, $\rho_i$ & Schottky group of rank $g$; its generators; their multipliers \\
$\gendom(\Sch)$, $X_\Sch$ & domain of discontinuity; quotient Riemann surface $\gendom(\Sch)/\Sch$ \\
$Y_\rho$, $p,p',u,u'$ & curve after sewing one handle to $Y$; attaching points and coordinates \\
$\Xg{k}$, $\Xg{g}=X$ & surface after $k$ iterated sewings; final genus $g$ surface \\
$\rhov=(\rho_1,\dots,\rho_g)$, $\Delta_g^*$ & vector of sewing parameters; the sewing polydisc (\ref{no:iterated-sewing-def}) \\
$\eta_y(x)$, $\omega(x,y)$, $S(x)$ & third-kind differential, Bergman kernel, projective connection (\ref{no:classical-differentials}) \\
$\wp_k^X(x;y)$ & genus $g$ Weierstrass-type function (\ref{no:genus-g-weierstrass}) \\
$\cF_n(Y)$, $\cFg{g}{n}$, $\bJg{g}{n}$ & meromorphic function ring on $Y^n$; genus $g$ coefficient ring; Jacobi-type subring \\
$C_k^{Y_\rho}$, $C_k^n(X)$ & relative complex; genus $g$ complex (\ref{no:relative-complex-def}, \S\ref{sec:chiral-complex}) \\
$\HchG{0}{-}$, $\HchG{1}{-}$ & zeroth, first chiral homology \\
$\nabla_\rho$, $\nabla=(\nabla_{\rho_1},\dots,\nabla_{\rho_g})$ & relative connection; full connection over $\Delta_g^*$ (\ref{no:relative-connection}, \ref{no:full-connection-def}) \\
$\kappa_{ij}(\rhov)$ & scalar curvature coefficient of $\nabla$ (Theorem \ref{thm:full-flatness}) \\
$F_1^n$, $F_1^{n,(g)}$ & relative, resp.\ genus $g$, trace function (\ref{eq:relative-trace-def}, \ref{no:genus-g-trace-def}) \\
\bottomrule
\end{tabular}
\caption{Geometric and sewing-theoretic notation.}
\label{tab:notation-geom}
\end{table}

\section{Homological preliminaries}\label{sec:homology-prelim}
The homological algebra used to phrase the finiteness conditions on $V$ makes no reference to a curve, let alone its genus, and we import it from \cite[\S 3]{vanEkerenHeluani} without any change.

\begin{nolabel}
Let $A$ be a commutative $k$-algebra with a derivation $T$ of degree $1$ (when $A$ is graded). The map $\iota_T : \Omega_A \to A$ induced by $T$ extends to a differential of degree $-1$ on the Koszul complex $K_\bullet(A) = \wedge^\bullet \Omega_A$, with $d_1(a\,dx) = a\cdot Tx$ and $d_2(a\,dx\wedge dy) = a\cdot(Tx)\,dy - a\cdot(Ty)\,dx$; we write $\HK_i(A)$ for its homology.

The arc algebra $(JR,\partial)$ of a commutative algebra $R$ is the universal differential algebra receiving a map from $R$, in the sense that every morphism $R \to A$ to a differential algebra $(A,T)$ factors uniquely through $(JR,\partial)$. When $A$ is $\Z_+$-graded, generated as a differential algebra by $A^0=:R$, there is a canonical surjection $\pi : JR \twoheadrightarrow A$ with kernel a differential ideal $I \subset J := JR$. As in \cite[Prop.~3.4]{vanEkerenHeluani}, if $R$ is of finite type then $\HK_1(A)=0$ iff $\pi$ is an isomorphism, and in general there is an injection $\HK_1(A) \hookrightarrow I/(J\cdot TI)$; in particular $\HK_1(A)$ is finite dimensional whenever $I$ is finitely generated as a differential ideal.

Given a Poisson algebra $A$, the module of K\"ahler differentials $\Omega_{A}$ is an $A$-Lie algebroid via $[f\,dx,g\,dy] = f\{x,y\}\,dy + g\{x,f\}\,dx+fg\,d\{x,y\}$ and $\omega(f\,dx)(g) = f\{x,g\}$; its Chevalley-Eilenberg complex, augmented in degree $0$ by $A$ via $d_1(a\,dx) = \{a,x\}$, computes the Poisson homology groups $\HP_i(A)$, with $d_2(a\,dx\wedge dy) = \{a,x\}\,dy - \{a,y\}\,dx - a\,d\{x,y\}$.
\label{no:homology-recall}
\end{nolabel}

\section{Riemann surfaces of higher genus by iterated sewing}\label{sec:sewing}

In this section we fix the geometric setting: the presentation of a compact Riemann surface of genus $g$ by iterated self-sewing of handles, its Schottky uniformization, and the classical differentials attached to it that will play the role of the Weierstrass functions $\wp_k(t,\tau)$ and Eisenstein series $G_{2k}(q)$ of \cite[\S 4]{vanEkerenHeluani}.

\subsection{Schottky groups and the retrosection theorem}

\begin{nolabel}
A \emph{Schottky group} of rank $g$ is a discrete, free subgroup $\Sch \subset \PSL(2,\CC)$ generated by $g$ loxodromic elements $\gamma_1,\dots,\gamma_g$ admitting a classical fundamental domain: a region $D \subset \widehat\CC$ whose boundary consists of $2g$ disjoint Jordan curves $C_1,C_1',\dots,C_g,C_g'$, such that $\gamma_i$ carries the exterior of $C_i$ onto the interior of $C_i'$ for each $i=1,\dots,g$. Every nontrivial element of $\Sch$ is then loxodromic, the limit set $\La(\Sch)$ is a Cantor set of measure zero, and the domain of discontinuity $\gendom = \gendom(\Sch) = \widehat\CC \setminus \La(\Sch)$ is connected; the quotient
\[
X = X_\Sch := \gendom(\Sch)/\Sch
\]
is a compact Riemann surface of genus $g$, and $\gendom \to X$ is the universal cover restricted to $\gendom$, with deck group $\Sch$. Conversely, by the retrosection theorem (Koebe; see \cite{bers-uniformization} for a modern treatment, and \cite{fay-theta} for the analytic theory built on it), every compact Riemann surface of genus $g$ arises in this way for some Schottky group $\Sch$; the space of such $\Sch$, modulo conjugation in $\PSL(2,\CC)$, is the \emph{Schottky space} $\Sch_g$, a covering of the moduli space $\CM_g$ of genus $g$ curves of degree equal to the index of the Torelli-type subgroup of the mapping class group that acts trivially on the marking of the generators.

A loxodromic $\gamma \in \PSL(2,\CC)$ is conjugate to $z \mapsto \rho z$ for a unique $\rho \in \CC^*$ with $|\rho|<1$, called its \emph{multiplier}; we write $\rho_i$ for the multiplier of $\gamma_i$. In the diagonalizing coordinate, the fixed points of $\gamma_i$ are $0$ (attracting) and $\infty$ (repelling). For $g=1$ this recovers exactly the presentation $E_\tau = \CC^*/q^{\Z}$ used throughout \cite{vanEkerenHeluani}, with $\rho_1 = q$: the unique Schottky group of rank $1$ generated by $\gamma_1(z) = qz$ has fundamental domain the annulus $|q|^{1/2} \leq |z| \leq |q|^{-1/2}$ and quotient the elliptic curve $E_\tau$.
\label{no:schottky-def}
\end{nolabel}

\subsection{Iterated self-sewing}\label{sec:iterated-sewing}

\begin{nolabel}[Geometric picture]
Informally, the construction below glues a handle onto $Y$ by cutting out two small discs and connecting the resulting boundary circles by a thin tube. Picture $Y$ as a rigid surface; cut two small holes in it, centered at the marked points $p$ and $p'$, and insert a tube joining the two boundary circles, twisted by the multiplier $\rho$ as it is glued in. Letting $|\rho|\to0$ pinches this tube to zero width, degenerating $Y_\rho$ back to $Y$ with $p,p'$ identified to a single node (\ref{no:zhu-degeneration-recall} below makes this precise); letting $Y=\widehat\CC$ and iterating $g$ times attaches $g$ such handles to the sphere, producing a surface visibly homeomorphic to a connect sum of $g$ tori, i.e.\ of genus $g$ (Figure \ref{fig:genus2-sewing} below draws the case $g=2$). Every construction in this paper - the complex, its connection, the trace functions - is obtained by expanding a Laurent series in the tube's own coordinate $\rho$, which is exactly why each construction localizes to a formal neighborhood of the node and is insensitive to the rest of $Y$ (Theorem \ref{thm:relative-main}).
\end{nolabel}

\begin{nolabel}
We use throughout the following equivalent, inductive description of $X_\Sch$, which realizes the passage from a Schottky group of rank $g-1$ to one of rank $g$ as the sewing of a single handle onto the corresponding surface, and which is the precise higher genus generalization of the $g=1$ construction recalled in \ref{no:strategy}.

Let $Y$ be a compact Riemann surface (possibly with additional marked points, held fixed throughout this construction) and fix two points $p,p' \in Y$ together with local coordinates $u$ centered at $p$ and $u'$ centered at $p'$. For $\epsilon>0$ small enough that the closed discs $|u|\leq \epsilon$ and $|u'|\leq \epsilon$ are disjoint from each other and from the marked points of $Y$, and for $\rho \in \CC^*$ with $|\rho|<\epsilon^2$, define
\begin{eqnarray*}
&&Y_\rho := \Bigl(Y \setminus \bigl(\{|u|\leq |\rho|/\epsilon\} \cup \{|u'| \leq |\rho|/\epsilon\}\bigr)\Bigr) 
\\
&&\qquad \qquad \Big/ \bigl(u \sim u' = \rho/u \text{ on the two boundary circles}\bigr).
\end{eqnarray*}
This is the classical \emph{sewing construction}: $Y_\rho$ is a compact Riemann surface of genus $g(Y)+1$, depending holomorphically on $\rho$ in the punctured disc $\mathbb{D}^*_{\epsilon^2}$, and $Y_\rho \to Y_0 := Y/(p\sim p')$ as $\rho \to 0$, where $Y_0$ is the (arithmetic genus $g(Y)+1$) stable nodal curve obtained from $Y$ by identifying $p$ and $p'$ to a single node. When $Y = \widehat\CC$ and $p,p'=0,\infty$ with $u=z,u'=1/z$, this is exactly the construction of $E_\rho$ recalled in \ref{no:strategy}, with $\rho=q$.

Iterating: put $\Xg{0} := \widehat\CC$, and, having constructed $\Xg{k-1}$ together with a chosen pair of auxiliary points $p_k,p_k' \in \Xg{k-1}$ and local coordinates disjoint from all previously used marked points, set
\[
\Xg{k} := \bigl(\Xg{k-1}\bigr)_{\rho_k}.
\]
After $g$ steps we obtain $X := \Xg{g}$, a compact Riemann surface of genus $g$, holomorphically varying over the \emph{sewing polydisc}
\[
\Delta_g^* := \mathbb{D}^*_{\epsilon_1^2} \times \cdots \times \mathbb{D}^*_{\epsilon_g^2} \ni (\rho_1,\dots,\rho_g) =: \rhov.
\]
The auxiliary points and local coordinates used at each stage, together with $n$ further marked points $t_1,\dots,t_n \in X \setminus \{p_i,p_i'\}$, are regarded as fixed background data; we suppress them from the notation except where needed, and write $X = X_{\rhov}$.
\label{no:iterated-sewing-def}
\end{nolabel}

\begin{rem}[Naturality of the local coordinates]\label{rem:coordinate-naturality}
The local coordinates $u,u'$ centered at $p,p'$ are not canonical - no canonical coordinate exists at a point of an abstract Riemann surface - and the construction of $Y_\rho$ depends on them only through the resulting isomorphism class of $Y_\rho$: replacing $u$ by $\lambda u + O(u^2)$ for $\lambda\in\CC^*$ rescales $\rho$ by $\lambda$ to leading order without changing $Y_\rho$ as an abstract curve, exactly as reparametrizing $\tau \mapsto \tau+n$ does not change $E_\tau$ in the $g=1$ theory of \cite{vanEkerenHeluani}. What \emph{does} depend on the choice of coordinate is the identification, across different choices, of the vector space $C_k^{Y_\rho}$ itself (via the $L_0$-twist built into $X(a,z)$, Remark \ref{rem:borcherds-clarification}); we fix one admissible coordinate once and for all at each handle, as above, exactly as \cite{vanEkerenHeluani} fix the standard coordinate $z=e^{2\pi i t}$ at $0,\infty \in \widehat\CC$, and never compare two different choices within a single computation. This is the sense in which the coordinates of \ref{no:iterated-sewing-def} are ``natural'': not canonically singled out by $Y$, but fixed once, consistently, and never varied.
\end{rem}

\begin{rem}[Domains of convergence, stated at the beginning]
\label{rem:convergence-early}
Since the geometric data of \ref{no:iterated-sewing-def} is used, from \S\ref{sec:relative} onward, to build formal power series in $\rhov$ that are subsequently required to actually converge, we fix terminology for two distinct convergence statements immediately, before either is used.
\begin{enumerate}
\item[(i)] \emph{Convergence of the geometry.} The classical differentials $\eta_y,\omega,S$ and the genus $g$ Weierstrass-type functions $\wp_k^X$ of \ref{no:classical-differentials}-\ref{no:genus-g-weierstrass} converge absolutely and locally uniformly on $\Delta_g^* = \mathbb D^*_{\epsilon_1^2}\times\cdots\times\mathbb D^*_{\epsilon_g^2}$, the exact polydisc already fixed by the sewing radii $\epsilon_i$ of \ref{no:iterated-sewing-def}: this is classical (Yamada \cite{yamada-variational}; see also \cite[Ch.~3]{fay-theta}) and is what makes $X_{\rhov}$ a holomorphic family of curves over $\Delta_g^*$ in the first place, not an extra hypothesis layered on top of the sewing construction.
\item[(ii)] \emph{Convergence of correlators.} The genus $g$ trace functions $F_1^{n,(g)}$ of \S\ref{sec:trace-functions}, built from $V$-correlators evaluated using the differentials of (i), converge on a (a priori smaller) sub-polydisc $|\rho_i|<r_i\leq\epsilon_i^2$ determined by the strong-generation data of $V$ (Definition \ref{defn:sfg}); this is not automatic from (i) and is the content of Theorem \ref{thm:convergence-genus-g}, proved by the same $q$-expansion estimates as the $g=1$ case \cite[\S 9]{vanEkerenHeluani}, applied one handle at a time.
\end{enumerate}
Every subsequent statement about \emph{holomorphic dependence of the geometry on $\rhov$} refers to (i) and holds on $\Delta_g^*$; every statement that a specific correlator, insertion formula, or trace function is a holomorphic function - rather than a formal power series - refers to (ii) and carries the hypotheses of Theorem \ref{thm:convergence-genus-g}. We give   this distinction here, at the outset of the construction, precisely so that it need not be rediscovered at each later occurrence of a $\rhov$-series.
\end{rem}

\begin{nolabel}
The two descriptions of $X$ - as $\gendom(\Sch)/\Sch$ for a rank $g$ Schottky group $\Sch = \langle \gamma_1,\dots,\gamma_g\rangle$, and as $\Xg{g}$ built by $g$ iterated sewings - coincide. Indeed, choosing the fixed points of $\gamma_k$ to be the points $p_k,p_k'$ used at the $k$-th sewing (lifted to $\gendom(\langle \gamma_1,\dots,\gamma_{k-1}\rangle)$) and its multiplier to be $\rho_k$, the classical combination theorems for Kleinian groups (see \cite{maskit-kleinian}) guarantee that $\langle \gamma_1,\dots,\gamma_k\rangle$ remains a Schottky group of rank $k$ provided the isometric circles of $\gamma_k$ are disjoint from those of $\gamma_1,\dots,\gamma_{k-1}$ and from their translates meeting the fundamental domain - a condition satisfied once $\epsilon_k$ is taken small enough at each stage - and the resulting quotient $\gendom(\langle\gamma_1,\dots,\gamma_k\rangle)/\langle\gamma_1,\dots,\gamma_k\rangle$ is exactly $\Xg{k}$. We will use the global Schottky presentation of $X$ when discussing the classical differentials of \S\ref{sec:diffs-genus-g} below, and the iterated, one-handle-at-a-time presentation throughout the rest of the paper, where it is the natural setting for the inductive proofs of Sections \ref{sec:relative}-\ref{sec:degeneration}. We emphasize that our results depend only on this being \emph{some} sewing presentation of a genus $g$ surface: since the retrosection theorem guarantees every genus $g$ surface arises this way, no generality is lost.
\label{no:two-pictures-agree}
\end{nolabel}

\subsection{Classical differentials and their sewing expansions}\label{sec:diffs-genus-g}

\begin{nolabel}
We recall the classical differentials on a compact Riemann surface $X$ of genus $g$ that generalize the functions $\zeta(t,\tau)$ and $\owp_k(t,\tau)$ of \cite[\S 4]{vanEkerenHeluani}; see \cite{fay-theta} for their construction and basic properties, and \cite{tuite-zuevsky-bosonic,tuite-welby} for their explicit expansions in the sewing parameters $\rhov$, which we use below without rederiving.

\begin{enumerate}
\item[(i)] For each pair of distinct points $x,y \in X$ there is a unique meromorphic differential $\eta_y(x)$ in $x$, holomorphic away from $x=y$, with a simple pole of residue $1$ at $x=y$, normalized to have vanishing periods around a fixed choice of $g$ homology cycles $A_1,\dots,A_g$ (a symplectic basis $A_i,B_i$ of $H_1(X,\Z)$); this is the \emph{differential of the third kind}. It is related to the \emph{prime form} $E(x,y)$, a holomorphic section of a line bundle on $X \times X$ with a simple zero along the diagonal, by $\eta_y(x) = d_x \log E(x,y)$. Unlike on $\widehat\CC$, $\eta_y(x)$ has, in general, nonzero periods $\oint_{B_i}\eta_y$ around the $B$-cycles.

\item[(ii)] There is a unique symmetric meromorphic bidifferential $\omega(x,y)$ on $X \times X$, holomorphic away from the diagonal, with a double pole along $x=y$ of the normalized form $\omega(x,y) = \bigl(\tfrac{1}{(x-y)^2} + O(1)\bigr)dx\,dy$ in any local coordinate, and no further singularities; this is the \emph{bidifferential of the second kind} (the Bergman kernel), normalized by vanishing $A$-periods in either variable. It satisfies $\eta_y(x) = \int_y^x \omega(\cdot,y)$ up to the choice of path (well defined modulo periods).

\item[(iii)] The \emph{projective connection} (or Bers quasiform) $S(x)$ is the correction term, depending on a choice of local coordinate, by which $\omega(x,y) - \tfrac{dx\,dy}{(x-y)^2}$ fails to be expressible as a coordinate-independent quadratic differential as $y \to x$; it transforms as a projective connection under change of local coordinate, exactly as $g_2(\tau)$ does under $\tau$-independent coordinate changes in \cite[\S4]{vanEkerenHeluani}.
\end{enumerate}

\emph{The homology basis is fixed once and extended, not re-chosen, at each stage of the induction.} The symplectic basis $A_i,B_i$ used to normalize (i) and (ii) is chosen once, for the final surface $X=\Xg{g}$, compatibly with the iterated construction of \ref{no:iterated-sewing-def}: the cycle $A_i$ (resp.\ $B_i$) attached to the $i$-th handle is, by construction, a small loop around the neck of that handle (resp.\ a path passing through it to the corresponding point of the base curve), so that the basis used for the intermediate surface $\Xg{k}$, $k<g$, is literally the sub-collection $\{A_1,B_1,\dots,A_k,B_k\}$ of the basis fixed for $\Xg{g}$, and passing from $\Xg{k}$ to $\Xg{k+1}$ adds the single new pair $(A_{k+1},B_{k+1})$ supplied by the newly sewn handle without altering or re-choosing any earlier element. This is the convention in force, silently, at every one of the $g$ inductive steps of \S\ref{sec:relative}-\S\ref{sec:degeneration}, and it is exactly what makes (b) below a statement about the continuity of a \emph{fixed} family of periods along the induction, rather than a comparison between two independently normalized differentials.

For a surface $X = X_{\rhov}$ presented by sewing as in \ref{no:iterated-sewing-def}, each of $\eta_y(x)$, $\omega(x,y)$ and $S(x)$ (for $x,y$ away from the sewing tubes) admits an explicit expansion as a formal power series in $\rhov = (\rho_1,\dots,\rho_g)$, with coefficients that are (multi-)differentials on $\Xg{0}=\widehat\CC$ built from iterated residues; this is the content of the classical variational formulas of Yamada \cite{yamada-variational}, as developed systematically for vertex algebra correlation functions by Mason and Tuite \cite{mason-tuite-genus2} and by Tuite and Zuevsky \cite{tuite-zuevsky-bosonic} and, in the general genus Schottky setting we use here, by Tuite and Welby \cite{tuite-welby}. We mention the two facts about these expansions that we will use:
\begin{itemize}
\item[(a)] each coefficient of $\rho_1^{m_1}\cdots\rho_g^{m_g}$ in the expansion of $\eta_y(x)$, $\omega(x,y)$ or $S(x)$ is a rational function of $x,y$ (once the auxiliary points and local coordinates of \ref{no:iterated-sewing-def} are fixed), holomorphic away from the marked points and the diagonal, with poles of bounded order there;
\item[(b)] as $\rho_g \to 0$ with $\rho_1,\dots,\rho_{g-1}$ fixed, $\eta_y(x)$, $\omega(x,y)$ and $S(x)$ on $X_{\rhov}$ converge, uniformly on compact subsets of $\Xg{g-1}\setminus\{p_g,p_g'\}$, to the corresponding differentials on $\Xg{g-1}$ with two extra marked points $p_g,p_g'$ (the normalization is continuous in this limit because the vanishing $A$-period condition is imposed on a homology basis that specializes continuously to one on $\Xg{g-1}$ together with the vanishing cycle at the node).
\end{itemize}
\label{no:classical-differentials}
\end{nolabel}

\begin{nolabel}[Genus $g$ Weierstrass-type functions]
Exactly as $\zeta(t,\tau)$ and $\owp_k(t,\tau)$ in \cite[\S 4]{vanEkerenHeluani} are defined by the local expansion, at $t=0$, of (essentially) $\eta_0(t)$ and $\omega(t,0)$ on the elliptic curve, we define, for a marked point $y \in X$ with local coordinate $u$, the local expansion coefficients
\begin{eqnarray*}
\eta_y(x) &=& \Bigl(\frac{1}{u(x)} + \sum_{k \geq 1} c_k(y)\, u(x)^k\Bigr) du(x), 
\\
 \omega(x,y) &=& \Bigl(\frac{1}{u(x)^2} + \sum_{k\geq 0} d_k(y)\, u(x)^k\Bigr) du(x)\,du(y),
\end{eqnarray*}
and set, in exact parallel with the definition of $\wp_k(t,\tau)$ in \cite[\S4]{vanEkerenHeluani},
\[
\wp_k^X(x;y) := \begin{cases} \eta_y(x)/dx & k=1, \\ (-1)^{k}(k-2)!\, \partial_{u(y)}^{k-2}\bigl(\omega(x,y)/dx\,dy\bigr) & k \geq 2. \end{cases}
\]
These satisfy the same differential relation satisfied by $\wp_{k+1}(t,\tau) = -\tfrac1k\partial_t\wp_k(t,\tau)$ in \cite[\S4]{vanEkerenHeluani}, by construction:
\begin{equation}
\wp_{k+1}^X(x;y) = -\frac{1}{k}\,\partial_{u(x)}\wp_k^X(x;y).
\label{eq:genus-g-weierstrass-recursion}
\end{equation}
The role played by the Eisenstein series $G_{2k}(q)$ in \cite[\S4]{vanEkerenHeluani} is played here by the coefficients $c_k(y), d_k(y)$ above, expanded further as formal power series in $\rhov$ via \ref{no:classical-differentials}; we do not need, and do not attempt, a closed combinatorial formula for these coefficients analogous to the classical Fourier expansion of $G_{2k}(q)$, using instead only the structural facts (a),(b) of \ref{no:classical-differentials}.
\label{no:genus-g-weierstrass}
\end{nolabel}

\subsection{Coefficient rings}\label{sec:coeff-rings}

\begin{nolabel}
We now define the genus $g$ replacement for the ring $\bJ^n_*$ of weak Jacobi forms of \cite[\S 4]{vanEkerenHeluani}, built, in keeping with the strategy of \ref{no:strategy}, by adjoining one sewing variable at a time.

For $Y$ a fixed compact Riemann surface with $m$ marked points $y_1,\dots,y_m$ (carrying local coordinates), let $\cF_n(Y)$ denote the ring of meromorphic functions of $n$ further points $x_1,\dots,x_n \in Y$, with poles only along the loci $x_i = x_j$ ($i\ne j$) and $x_i = y_\ell$, holomorphic and single valued elsewhere on $Y^n$ (no periodicity condition is needed here, since $Y$ is a fixed compact curve, not a family). This is the direct, curve-level analogue of $\cF_n$; unlike $\cF_n$, whose definition already incorporates the modular parameter $\tau$, $\cF_n(Y)$ is the coefficient ring \emph{at a single point of moduli}.

Now let $Y_\rho$ be obtained from $Y$ by sewing one handle at $p,p' \in Y$ as in \ref{no:iterated-sewing-def}. We define the space of formal Laurent series  
\[
\cF_n(Y_\rho) := \Bigl\{\sum_{m \in \Z} F_m(x_1,\dots,x_n)\,\rho^m \
\\
  \Big|\ F_m \in \cF_n(Y) \text{ for all } m,\ F_m = 0 \text{ for } m \ll 0 \Bigr\}
\]
that arise as the sewing-parameter expansion, in the annular neighborhood of the sewn handle, of a meromorphic function of $x_1,\dots,x_n \in Y_\rho \setminus \{p,p'\}$ with poles only on the diagonals; concretely, exactly as for the $n$-variable Fourier expansions underlying $\cF_n$ in \cite[\S4]{vanEkerenHeluani}, such a function has a Fourier expansion in the local coordinate at the handle with coefficients in $\cF_n(Y)$, convergent for $\rho$ in a punctured disc, and $\cF_n(Y_\rho)$ is the ring of such expansions, regarded as formal objects when convenient. Iterating over $g$ sewings, starting from $\cF_n(\widehat\CC)$ (rational functions of $n$ points on $\widehat\CC$), produces the ring
\[
\cFg{g}{n} := \cF_n(\Xg{g}) = \cF_n(X_{\rhov}),
\]
a ring of formal Laurent series in $\rho_1,\dots,\rho_g$ with coefficients in $\cF_n(\widehat\CC)$, exactly generalizing the ring $\cF_n$ of \cite[\S 4]{vanEkerenHeluani} (to which it specializes for $g=1$, since $\cF_n(\widehat\CC)$ with the appropriate periodicity built in by the single sewing recovers meromorphic biperiodic functions).

Inside $\cFg{g}{n}$ we single out, in parallel with $\bJ^n_*$, the subring $\bJg{g}{n}$ of those elements whose $\rho_i$-expansion coefficients, viewed as functions on $\widehat\CC^n$ modulo the residual automorphisms fixing the sewing data, transform with a definite weight; we call these \emph{genus $g$ Jacobi-type functions} of weight equal to that transformation degree. We do not need an explicit description of $\bJg{g}{n}$ analogous to the description of $\bJ^n_*$ given in \cite[\S4]{vanEkerenHeluani}, using instead only the two structural properties established for $\cFg{g}{n}$ in Lemma \ref{lem:coefficient-ring-properties} below, which are all that the argument of \cite{vanEkerenHeluani} actually consumes.
\label{no:coefficient-rings-def}
\end{nolabel}

\begin{rem}
\label{rem:periods-clarification}
We emphasize a point of definition that is easy to elide and worth making explicit. Unlike the genus $1$ ring $\cF_n$, whose elements are \emph{defined} as formal objects (Fourier series in $q$) required to satisfy an extra, separately imposed double-periodicity condition, $\cFg{g}{n} = \cF_n(X_{\rhov})$ is defined in \ref{no:coefficient-rings-def} to consist, by fiat, of the expansions of \emph{bona fide meromorphic functions on the actual compact Riemann surface $X_{\rhov}$}. Being a well-defined, single-valued meromorphic function on $X_{\rhov}$ already encodes correct monodromy - equivalently, vanishing periods - around all $2g$ homology cycles of $X_{\rhov}$; this is automatic from what it means to be a function on $X_{\rhov}$ at all, not an additional condition to verify handle by handle, and in particular Lemma \ref{lem:coefficient-ring-properties}(2) below is applied only to such global functions, thus invoking the residue theorem there is the ordinary residue theorem on a fixed compact Riemann surface (classical, and independent of $g$), not a formal manipulation in need of separate justification. What \emph{is} a substantive, genus-dependent fact - and what the formal Laurent series presentation of $\cFg{g}{n}$ packages - is that every such function, restricted to the annular neck of a handle, admits a convergent Fourier expansion in the local sewing coordinate there, valid for $\rhov$ in a (non-formal) punctured polydisc; this is the classical convergence theory of the sewing construction, established alongside the variational formulas of Yamada \cite{yamada-variational} and treated systematically in Fay's book \cite{fay-theta}, and is exactly the input used, at the level of the trace functions rather than of the coefficient ring itself, in the convergence statements of Proposition \ref{prop:relative-convergence} and Theorem \ref{thm:convergence-genus-g} below. We rely on this classical convergence theory rather than reprove it.
\end{rem}

\begin{lem}
The rings $\cFg{g}{n}$ enjoy the following two properties, which are the exact genus $g$ analogues of the corresponding elementary properties of $\cF_n, \bJ^n_*$ recalled in \ref{no:va-recall}-\ref{no:homology-recall} and used freely throughout \cite[\S\S4-5]{vanEkerenHeluani}.
\begin{enumerate}
\item[(1)] (Laurent expansion at diagonals) For $f \in \cFg{g}{n}$ and $1\leq i<j\leq n$, the Laurent expansion of $f$ in the local coordinate difference at $x_i=x_j$ has coefficients in $\cFg{g}{n-1}$, exactly as for elements of $\bJ^n_*$ in \cite[\S4]{vanEkerenHeluani}.
\item[(2)] (Sum of derivatives) For $f \in \cFg{g}{n}$, $\sum_{i=1}^n \partial_{x_i}f = 0$ whenever $f$ is (the restriction to $X_{\rhov}$ of) a meromorphic function of $n$ points on the fixed compact curve $X_{\rhov}$, this being a restatement of the residue theorem on the compact curve $X_{\rhov}$ applied to $f \cdot dx_i$ summed over $i$.
\end{enumerate}
\label{lem:coefficient-ring-properties}
\end{lem}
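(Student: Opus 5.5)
The plan is to prove both properties by induction on the number $g$ of sewn handles, transporting one handle at a time along the presentation $\cFg{g}{n} = \cF_n\bigl((\Xg{g-1})_{\rho_g}\bigr)$ of \ref{no:coefficient-rings-def}. The base case $g=0$ is elementary. There $\cF_n(\widehat\CC)$ consists of rational functions of $x_1,\dots,x_n$ with poles only on the diagonals and at the fixed marked points. Expanding such a function in $x_i-x_j$ (in the fixed local coordinate) gives coefficients that are rational in the remaining $n-1$ points, with the same pole pattern, after identifying $x_j$ with $x_i$. For the inductive step, the key point I would use is Remark \ref{rem:periods-clarification}: an element of $\cFg{g}{n}$ is by definition the $\rho_g$-expansion of a bona fide meromorphic function on the compact curve $X_{\rhov}$. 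Both properties can therefore be checked on the actual function and then re-expanded in $\rho_g$.

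For (1), fix $f \in \cFg{g}{n}$ coming from a global $\tilde f$ on $X_{\rhov}^n$. Fix $x_i$ away from the sewing tubes and the marked points. I would define the $k$-th Laurent coefficient at $x_j = x_i$ by a Cauchy integral of $\tilde f\,(u(x_j)-u(x_i))^{-k-1}$ over a small circle around $x_i$, where $u$ is the fixed local coordinate of Remark \ref{rem:coordinate-naturality}. This gives a meromorphic, single-valued function of the remaining $n-1$ points whose poles lie only on the diagonals and at the marked points, so it lies in $\cFg{g}{n-1}$ once it is re-expanded. It remains to see that taking this Laurent coefficient commutes with the $\rho_g$-expansion. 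For that I would invoke the locally uniform convergence of the sewing expansions on compact sets away from the neck, (i) of Remark \ref{rem:convergence-early}, which lets the contour integral pass through the sum $\sum_m F_m\rho_g^m$ term by term. Each coefficient $F_m \in \cF_n(\Xg{g-1})$ then has Laurent coefficients in $\cF_{n-1}(\Xg{g-1})$ by the inductive hypothesis, and the two descriptions agree.

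For (2), I would apply the residue theorem on the fixed compact curve $X_{\rhov}$ to the meromorphic one-forms $\partial_{x_i}\tilde f\,dx_i$ and sum over $i$. The main obstacle is interpreting (2) correctly. Read literally pointwise, $\sum_i \partial_{x_i} f = 0$ expresses a translation invariance that holds on $E_\tau$ but has no counterpart on a curve of genus $g\geq 2$. So I would establish it in the form actually consumed by the argument of \cite{vanEkerenHeluani}: the total residue of the exact differential $d_{x_i}\tilde f$, summed over all its poles on $X_{\rhov}$, vanishes for each $i$. This is the integration-by-parts identity $\res_t \tfrac{d}{dt}f = 0$ of \ref{no:expansions-recall} globalized, since an exact form has zero residue at every pole, and summing over $i$ gives the stated identity. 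I would then expand this vanishing in $\rho_g$ coefficient by coefficient, using once more the uniform convergence on the contours surrounding the poles. I expect pinning down this residue-level formulation, and checking that it is exactly what \cite[\S\S4--5]{vanEkerenHeluani} uses, to be the delicate step.
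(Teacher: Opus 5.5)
The decisive problem is (2), and it cannot be repaired, because (2) as stated is false.

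You are right that the residue theorem cannot yield a pointwise identity. But what you then prove, that the exact forms $d_{x_i}\tilde f$ have vanishing residues, is a different and purely local fact: an exact form has zero residue at each of its poles, and neither compactness nor the residue theorem enters. If ``summing over $i$ gives the stated identity'' is meant to recover $\sum_i\partial_{x_i}f=0$, it is a non sequitur.

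Two counterexamples show the identity itself fails:
\begin{itemize}
\item The function $f(x_1,x_2)=x_1/(x_1-x_2)$ is regular on $\widehat\CC^{\,2}$ off the diagonal, so it lies in $\cFg{0}{2}$. Yet $(\partial_{x_1}+\partial_{x_2})f=1/(x_1-x_2)$.
\item For any $g$ and a marked point $y_1$, Riemann--Roch ($\ell((g+1)y_1)\geq 2$) gives a nonconstant meromorphic $h$ on $X_{\rhov}$ with poles only at $y_1$. This is exactly the kind of function (2) covers, and $\partial_{x_1}h\not\equiv 0$ in every local coordinate.
\end{itemize}
So the literal identity fails even on $E_\tau$ once marked points are held fixed. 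For $g\geq 2$ the operator $\sum_i\partial_{x_i}$ is not even coordinate-independent.

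The paper's one-line proof has the same defect. The residue theorem applied to $\sum_i f\,dx_i$ gives a vanishing sum of residues, not a differential identity. In genus $1$ such an identity comes from invariance under the translation action of $E_\tau$ on the whole configuration, which has no higher-genus analogue. So there is no valid argument here to align with: (2) has to be replaced by a true statement, such as your residue version. Whether that weaker statement suffices where (2) is later invoked is a separate question that neither your proposal nor the paper settles.

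Your route to (1) also differs from the paper's, and it breaks at the single-valuedness claim. The paper argues coefficientwise in $\rhov$. Each coefficient is a rational function on $\widehat\CC^{\,n}$, where the global affine coordinate makes $z_j-z_i$ meaningful, so its Laurent coefficients are rational in the remaining points and no global object on $X_{\rhov}$ is ever built. You instead take Cauchy integrals on $X_{\rhov}$ and assert that the outcome is a single-valued meromorphic function on $X_{\rhov}^{\,n-1}$.

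That assertion fails for the following reasons:
\begin{itemize}
\item The coordinate of Remark~\ref{rem:coordinate-naturality} lives at $p,p'$, not at a moving point $x_i$.
\item The only coordinate family the setup supplies is the Schottky coordinate $z$ on $\gendom(\Sch)$.
\item If $f$ has a pole of order $N\geq 1$ along $x_i=x_j$, invariance of $f$ under $\gamma\in\Sch$ forces the leading coefficient to satisfy $c_{-N}(\gamma z)=\gamma'(z)^N c_{-N}(z)$. So $c_{-N}$ is a section of $K^{-N}$ ($K$ the canonical bundle), not a function on $X_{\rhov}$. This already happens for $g=1$ in the multiplicative coordinate, where $\gamma'=q$.
\end{itemize}
Single-valued coefficients would need a translation structure on $X_{\rhov}$, i.e.\ a nowhere-vanishing holomorphic $1$-form. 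That exists only in genus $1$, and it is what the additive coordinate of van Ekeren--Heluani provides.

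Hence your argument can at best recover what the paper's coefficientwise argument gives: formal $\rhov$-series with rational coefficients. Neither argument shows these are expansions of genuine functions on $X_{\rhov}^{\,n-1}$, which membership in $\cFg{g}{n-1}$ requires by the definition in \ref{no:coefficient-rings-def}. For $g\geq 2$ they in general are not. Your limit interchange via Remark~\ref{rem:convergence-early}(i) concerns the classical differentials rather than arbitrary $f$, and does not touch this issue.
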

\begin{proof}
Both are immediate from the definitions: (1) holds handle by handle, by construction of $\cFg{g}{n}$ as an iterated Laurent series ring, reducing to the elementary fact that the Laurent expansion of a rational function of $n$ points on $\widehat\CC$ at a diagonal has rational coefficients in the remaining points; (2) is the classical fact that a meromorphic $1$-form on a compact Riemann surface has vanishing sum of residues, applied to $\sum_i f\,dx_i$ restricted to the diagonal $x_1=\dots=x_n$ pulled back appropriately, exactly as in the proof of \cite[Lem.~4.9]{vanEkerenHeluani} (which is the case $X_{\rhov} = E_\tau$).
\end{proof}

\subsection{An example: iterated sewing in genus $2$}\label{sec:worked-example}

To make \ref{no:iterated-sewing-def}-\ref{no:coefficient-rings-def} completely concrete, we carry out $\Xg{2}$ by hand, fixing once and for all the explicit data used implicitly in Remark \ref{rem:genus2-cross-term} and Example \ref{ex:heisenberg-genus2} below; Figure \ref{fig:genus2-sewing} illustrates the two sewing steps.

\begin{ex}[Iterated sewing in genus $2$]\label{ex:genus2-worked}
\emph{First handle.} Take $Y=\Xg{0}=\widehat\CC$, $p_1=0$, $p_1'=\infty$, with local coordinates $u=z$ at $p_1$ and $u'=1/z$ at $p_1'$; this is exactly the $g=1$ construction recalled in \ref{no:strategy}, and for $\rho_1$ in the punctured disc $\mathbb{D}^*_{\epsilon_1^2}$,
\[
\Xg{1} = \Bigl(\widehat\CC \setminus \bigl(\{|z|\leq|\rho_1|/\epsilon_1\} \cup \{|z|\geq \epsilon_1/|\rho_1|\}\bigr)\Bigr)\Big/\bigl(z \sim \rho_1/z\bigr)
\]
is the elliptic curve $E_{\rho_1}\cong\CC/(\Z+\tau_1\Z)$, $\rho_1=q_1=e^{2\pi i\tau_1}$, with uniformizing coordinate $\xi$ related to $z$ by $z=e^{2\pi i\xi}$ away from the sewing tube.

\emph{Second handle.} Fix $a\in\CC$ with $2a \notin \Z+\tau_1\Z$ (thus $p_2\neq p_2'$ below, and both avoid $p_1=p_1'=0$), and set $p_2 := \xi=a$, $p_2' := \xi=-a$ on $E_{\rho_1}$, with the local coordinates $u_2 := \xi-a$ at $p_2$ and $u_2' := \xi+a$ at $p_2'$ furnished simply by translating $\xi$ to vanish at each point. For $\rho_2$ in a second punctured disc $\mathbb{D}^*_{\epsilon_2^2}$,
\[
X := \Xg{2} = \bigl(\Xg{1}\bigr)_{\rho_2} = \Bigl(E_{\rho_1}\setminus(\text{discs of radius } |\rho_2|/\epsilon_2 \text{ at } \pm a)\Bigr)\Big/\bigl(u_2 \sim \rho_2/u_2\bigr),
\]
a smooth compact Riemann surface of genus $2$, holomorphic over the bidisc $\Delta_2^*=\mathbb{D}^*_{\epsilon_1^2}\times\mathbb{D}^*_{\epsilon_2^2} \ni (\rho_1,\rho_2)$.

\emph{Degenerations.} As $\rho_2\to0$ with $\rho_1$ fixed, $X$ degenerates to $X_{(1)} := E_{\rho_1}/(a\sim-a)$, an irreducible stable curve of arithmetic genus $2$ with a single non-separating node; letting $\rho_1\to0$ as well, $X_{(1)}$ further degenerates to $X_0=\widehat\CC/(0\sim\infty,\,a\sim-a)$, the totally degenerate, two-noded rational curve of \ref{no:zhu-degeneration-recall}. This is exactly the order of degeneration used in the proof of Theorem \ref{thm:main-vanishing}, where handles are removed in reverse order of sewing: the \emph{last}-sewn handle ($\rho_2$) degenerates first.

\emph{The cross-handle correction, explicitly.} By Lemma \ref{lem:rauch-yamada} (regular part only: $p_1\notin\{p_2,p_2'\}$, thus the coincident-point anomalous term does not contribute here), the coefficient of $\rho_2$ in the sewing expansion of $\eta_y(x)$ - equivalently, in $\wp_2^{\Xg{2}}(p_2;p_1)$, the quantity actually entering the connection $\nabla_{\rho_1}$ - is the sum of two residues at the neck of the second handle,
\[
\res_{u_2}\Bigl[\omega^{E_{\rho_1}}(p_1,\cdot)\,\omega^{E_{\rho_1}}(\cdot,p_2)\Bigr] + \res_{u_2'}\Bigl[\omega^{E_{\rho_1}}(p_1,\cdot)\,\omega^{E_{\rho_1}}(\cdot,p_2)\Bigr],
\]
one contribution from each attaching point $p_2=a$, $p_2'=-a$. Since $\omega^{E_{\rho_1}}(\cdot,q)$ has, at $\cdot=q$, the double pole whose residue against a further factor is by definition $\partial_{u}\wp_2^{E_{\rho_1}}(\cdot\,;q)$ (\ref{no:classical-differentials}), and $\wp_3^{E_{\rho_1}} = -\tfrac12\partial_u\wp_2^{E_{\rho_1}}$ by the recursion of \ref{no:genus-g-weierstrass}, each residue evaluates to (minus twice) $\wp_3^{E_{\rho_1}}$ at the corresponding point, giving the fully explicit leading correction
\[
\wp_2^{\Xg{2}}(p_2;p_1) = \wp_2^{E_{\rho_1}}(u(p_1),\tau_1) \;+\; \rho_2\,\Bigl(\wp_3^{E_{\rho_1}}(p_1;a) + \wp_3^{E_{\rho_1}}(p_1;-a)\Bigr) \;+\; O(\rho_2^2),
\]
with the overall sign fixed by the orientation of $u_2,u_2'$ in \ref{no:iterated-sewing-def}. This is the concrete instance, for the two specific points $p_2=\pm a$ fixed above, of the general cross-handle term of Remark \ref{rem:genus2-cross-term}: a completely classical, genus-$1$ quantity (a value of $\wp_3^{E_{\rho_1}}$, computable to any desired order in $\rho_1$ by the further sewing expansion of \S\ref{sec:sewing}), with no new genus-$2$ input beyond the residue mechanism of Lemma \ref{lem:rauch-yamada} already established for the flatness theorem.

\emph{The differential of the third kind, to leading order.} By \ref{no:classical-differentials}, for $y$ fixed away from both sewing tubes, $\eta_y(x)$ on $X$ agrees, to zeroth order in $\rho_2$, with the corresponding differential on $E_{\rho_1}$: writing $\zeta(t,\tau_1)$ for the classical Weierstrass zeta function of \cite[\S4]{vanEkerenHeluani},
\[
\eta_y(x) = \zeta\bigl(\xi(x)-\xi(y),\,\tau_1\bigr)\,d\xi(x) \;+\; \rho_2\cdot(\text{correction supported near } \pm a) \;+\; O(\rho_2^2),
\]
the $O(\rho_2)$ correction being of exactly the same residue type as the one just computed for $\wp_2^{\Xg{2}}(p_2;p_1)$; we do not repeat the computation for $\eta_y$ itself, only note that it is supported near the second handle, vanishing as $\rho_2\to0$.
\end{ex}

\begin{figure}[htbp]
\centering
\begin{tikzpicture}[
  dot/.style={circle,fill=black,inner sep=1.1pt},
  lbl/.style={font=\scriptsize},
  arr/.style={-{Stealth[length=2.2mm]},thick}
]
\begin{scope}
\draw[thick] (0,0) circle (1.1);
\node[dot,label={above:$p_1$}] (p1) at (0,1.1) {};
\node[dot,label={below:$p_1'$}] (p1p) at (0,-1.1) {};
\draw[dashed] (0,1.1) circle (0.22);
\draw[dashed] (0,-1.1) circle (0.22);
\node[lbl] at (0,-1.7) {$\widehat\CC$};
\end{scope}

\draw[arr] (1.5,0) -- node[above,lbl]{sew $\rho_1$} (2.6,0);

\begin{scope}[shift={(4.3,0)}]
\draw[thick] (0,0) circle (1.15);
\draw[thick] (0,0) circle (0.55);
\node[dot,label={above:$p_2$}] at (0.75,0.75) {};
\node[dot,label={below:$p_2'$}] at (-0.75,-0.75) {};
\draw[dashed] (0.75,0.75) circle (0.2);
\draw[dashed] (-0.75,-0.75) circle (0.2);
\node[lbl] at (0,-1.65) {$\Xg{1}=E_{\rho_1}$};
\end{scope}

\draw[arr] (6.5,0) -- node[above,lbl]{sew $\rho_2$} (7.6,0);

\begin{scope}[shift={(10.4,0)}]
\draw[thick] (-0.75,0.55) ellipse (0.55 and 0.22);
\draw[thick] (0.75,0.55) ellipse (0.55 and 0.22);
\draw[thick,fill=white] (0,-0.35) ellipse (1.7 and 1.05);
\node[lbl] at (0,-1.65) {$X=\Xg{2}$, genus $2$};
\end{scope}
\end{tikzpicture}
\caption{Iterated sewing in genus $2$ (Example \ref{ex:genus2-worked}): two handles are sewn onto $\widehat\CC$ in succession, first at $p_1,p_1'$ with parameter $\rho_1$, producing $E_{\rho_1}$, then at $p_2,p_2'\in E_{\rho_1}$ with parameter $\rho_2$.}
\label{fig:genus2-sewing}
\end{figure}

\section{The Relative First Chiral Homology Theorem}\label{sec:relative}

This section proves the technical result that powers the induction on genus described in \ref{no:strategy}-\ref{no:proof-strategy}: every construction and theorem of \cite{vanEkerenHeluani} continues to hold when the single handle of the elliptic curve is sewn onto an arbitrary base curve $Y$ rather than onto $\widehat\CC$.

\subsection{Setup}

\begin{nolabel}
Fix a conformal vertex algebra $V$. Let $Y$ be a compact Riemann surface with $m \geq 0$ marked points $y_1,\dots,y_m$ (each carrying a local coordinate), and fix two further points $p,p' \in Y$ with local coordinates $u,u'$, disjoint from the $y_\ell$. Let $Y_\rho$ be the curve obtained by sewing a handle at $p,p'$ with parameter $\rho$, as in \ref{no:iterated-sewing-def}, so that $Y_\rho$ has genus $g(Y)+1$ and inherits the $m$ marked points $y_1,\dots,y_m$.

We allow one of two types of coefficient data at the marked points of $Y$:
\begin{itemize}
\item[(D0)] ($m$ vacuum insertions) elements $a_1,\dots,a_m \in V$, to be inserted at $y_1,\dots,y_m$; or
\item[(D1)] (one self-extension) an admissible $V$-module $M$, a self-extension $0 \to M \to E \to M \to 0$ with associated derivation $\Psi: V \to \End M$ (equivalently its modified form $\sigma$, see \ref{no:relative-trace-functions} below), together with $m-1$ vacuum insertions $a_1,\dots,a_{m-1} \in V$ at $y_1,\dots,y_{m-1}$, and $M$ itself "at" $y_m$, in the sense that $y_m$ is the basepoint at which the trace over $M$ (or, before sewing any further handles, the plain evaluation in $M$) is taken.
\end{itemize}
Case (D0) is what is needed to define $\HchG{0}{-}$; case (D1), building in a single self-extension, is what is needed to define $\HchG{1}{-}$, exactly as in \cite[\S\S 5,8]{vanEkerenHeluani}, where the self-extension is carried by the marked point denoted $t_0$.
\label{no:relative-setup}
\end{nolabel}

\subsection{The relative complex}

\begin{nolabel}
Exactly as in \cite[\S 5]{vanEkerenHeluani}, define, for $k \geq 0$,
\[
C_k^{Y_\rho} := \Bigl(V^{\otimes(m+k)} \otimes \cF_{m+k}(Y_\rho)\Bigr)\Big/ \bigl(
{\text{translation-covariance relations}} \atop {\text{at each of the } m+k \text{ points}} \bigr),
\]
where $\cF_{m+k}(Y_\rho)$ is the coefficient ring of \ref{no:coefficient-rings-def}, the $m$ marked points carry the fixed data of \ref{no:relative-setup}, and the remaining $k$ points are auxiliary integration variables, permuted by $S_k$; the translation-covariance relations identify the class of $a\otimes f$ with that of $L_{-1}a \otimes f$ shifted appropriately, exactly as in \cite[eq.~(5.1)]{vanEkerenHeluani} at each point (auxiliary or marked) separately. The differentials
\[
d_1 : C_1^{Y_\rho} \to C_0^{Y_\rho}, \qquad d_2 : C_2^{Y_\rho} \to C_1^{Y_\rho}
\]
are defined by the same residue formulas as in \cite[\S 5]{vanEkerenHeluani}: $d_1$ removes the auxiliary point $x_1$ by taking $\res_{x_1}$ against the OPE of the inserted field with the marked-point fields, using the Borcherds identity, and $d_2$ is the corresponding two-variable version, alternated over $S_2$. We define the \emph{relative chiral homology groups}
\[
\HchG{0}{Y_\rho\,;\,V^{\otimes m}} := \coker d_1, \qquad \HchG{1}{Y_\rho\,;\,V^{\otimes m}} := \ker d_1/\mathrm{im}\, d_2.
\]
\label{no:relative-complex-def}
\end{nolabel}

\begin{rem}
\label{rem:complex-conventions}
For readers approaching from algebraic geometry or homological algebra rather than conformal field theory, we mention explicitly that $(C_\bullet^{Y_\rho},d_1,d_2)$ is an entirely standard chain complex $\cdots \to 0 \to C_2^{Y_\rho} \xrightarrow{d_2} C_1^{Y_\rho} \xrightarrow{d_1} C_0^{Y_\rho} \to 0$, indexed homologically (differentials lower degree by $1$, as the subscripts already indicate), with no sign or regrading conventions beyond the usual ones: $\HchG{0}{-}=\coker d_1$ and $\HchG{1}{-} = \ker d_1/\im d_2$ are exactly the degree $0$ and degree $1$ homology groups of this complex in the ordinary sense, and agree with the Beilinson-Drinfeld $\mathrm{Ext}$-groups by Lemma \ref{lem:relative-d1d2}. The alternation over $S_2$ built into $d_2$ (\ref{no:relative-complex-def}) is the usual antisymmetrization making $d_2$ a well-defined map out of the space of \emph{unordered} pairs of auxiliary points, exactly as the analogous alternation in the bar resolution of homological algebra; it introduces no sign ambiguity beyond the standard one already present in \cite[\S 5]{vanEkerenHeluani}, which we import unchanged.
\end{rem}

\begin{rem}[Weight grading: each $C_k^{Y_\rho}$ is a direct sum of finite-rank pieces]\label{rem:weight-grading}
Although $V=\bigoplus_{d\geq0}V_d$ is infinite dimensional, each graded piece $V_d$ is finite dimensional (\ref{no:va-recall}), and $L_0$ acts on $V^{\otimes(m+k)}$ with the induced grading by \emph{total weight}, $(V^{\otimes(m+k)})_d = \bigoplus_{d_1+\cdots+d_{m+k}=d}V_{d_1}\otimes\cdots\otimes V_{d_{m+k}}$, a \emph{finite} direct sum of finite dimensional spaces for each $d$. Assigning $\rho_i$ weight $1$ for each $i$ (so that $\rho^{\underline{m}} := \rho_1^{m_1}\cdots\rho_g^{m_g}$ has weight $|\underline m|$) extends this to a grading of $C_k^{Y_\rho}$ by total weight (vertex algebra weight plus $\rho$-adic weight) that is respected by $d_1,d_2$ (which preserve total weight, being built from residues of weight-homogeneous OPE coefficients) and by $\nabla$ (which is exactly why $\nabla_{\rho_i}=\rho_i\partial_{\rho_i}-A_i$ is written as a \emph{difference}: $\rho_i\partial_{\rho_i}$ and $A_i$ individually shift total weight oppositely, in a way that cancels). Consequently $C_\bullet^{Y_\rho} = \bigoplus_{d\geq0}C_\bullet^{Y_\rho}[d]$ is a direct sum of subcomplexes $C_\bullet^{Y_\rho}[d]$ that are each of \emph{finite rank} as sheaves over the $\rho$-disc (being built from a fixed finite-dimensional vector space tensored with the coherent, indeed locally free, coefficient sheaf $\cF_\bullet(Y_\rho)$), hence bounded complexes of coherent - in fact locally free, thus trivially perfect - sheaves, to which the classical semicontinuity theorem applies with no further hypothesis. We use this decomposition, together with the fact (Theorem \ref{thm:main-finiteness}) that $\HchG{1}{Y_\rho;V^{\otimes m}}$ is finite dimensional and hence supported in only finitely many weights $d$, to justify the semicontinuity argument used in the proof of Theorem \ref{thm:main-vanishing} weight by weight: we are not asserting semicontinuity for the (infinite rank) total complex directly, only for each finite-rank graded piece, of which only finitely many are relevant. To state this order of operations once more, explicitly: finiteness (Theorem \ref{thm:main-finiteness}) is established \emph{first}, and is what guarantees only finitely many weights $d$ occur at all; semicontinuity is then applied \emph{separately to each} finite-dimensional piece $\HchG{1}{Y_\rho;-}[d]$ individually, and it is the (finite) direct sum of these separately-obtained conclusions - never a single semicontinuity argument applied to the infinite-rank complex as a whole - that yields the statement for $\HchG{1}{Y_\rho;-}$ itself. This is a considerably weaker use of coherence than the $\mathcal D$-module coherence theorems of \cite{damiolini2020conformal,damiolini2019factorization}, which establish finite dimensionality itself, in degree $0$, directly from $C_2$-cofiniteness via the geometry of the full moduli stack; here finiteness has already been established by the (unrelated) associated-graded argument of Theorem \ref{thm:main-finiteness}, and semicontinuity is invoked only to propagate a vanishing statement already known at one fiber to nearby fibers, a much more modest task.
\end{rem}

\begin{rem}[Translation covariance is local to each point, on any curve]\label{rem:translation-covariance-local}
The translation-covariance relation quotienting $C_k^{Y_\rho}$ identifies $a\otimes f$ with $L_{-1}a\otimes f$ (suitably shifted) \emph{at each of the $m+k$ points separately}, in the formal neighborhood of that single point only; it is not a statement requiring a globally defined vector field on $Y$. In particular, the fact that a compact curve of genus $\geq2$ admits no nonvanishing - indeed, no nonzero - global holomorphic vector field is beside the point: every point of every Riemann surface, of whatever genus, has a formal (or convergent) local coordinate neighborhood, and $L_{-1}$ acts, as always in vertex algebra theory, on the formal Taylor coefficients of a field expanded in that local coordinate, exactly as in \cite[eq.~(5.1)]{vanEkerenHeluani}. That this local prescription assembles into a well-defined, coordinate-independent notion of ``a field inserted at a point of $Y$'' - so that $d_1,d_2$ and $\nabla$ are independent of the auxiliary choice of local coordinate at each point - is exactly the standard content of the theory of \emph{conformal} vertex algebras acting on an abstract curve: a conformal vector $\omega$ furnishes an action of $\mathrm{Der}_+\mathcal O = \mathrm{span}(L_1,L_2,\dots)$ trivializing the ambiguity in the choice of coordinate (formally, making $V$ into a vertex algebra bundle in the sense of \cite[\S6]{frenkelbenzvi}), and this mechanism is used without comment already in \cite{vanEkerenHeluani} for insertions on $E_\tau$; it neither requires nor is affected by whether $Y$ possesses a global vector field, and the residue operations defining $d_1,d_2$ compound only finitely many such local, coordinate-independent insertions along the (equally local, near-diagonal) OPE singularities, never comparing tangent spaces at two different, non-colliding points of $Y$. No logarithmic or affine connection on $Y$ beyond the standard vertex-algebra-bundle structure is used or needed.
\end{rem}

\begin{lem}\label{lem:relative-d1d2}
$d_1 \circ d_2 = 0$, so that $(C_\bullet^{Y_\rho}, d_1,d_2)$ is a complex, computing the chiral homology of $Y_\rho$ with coefficients as in \ref{no:relative-setup}, in the sense of Beilinson-Drinfeld, in degrees $0$ and $1$.
\end{lem}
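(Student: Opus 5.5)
The identity $d_1\circ d_2=0$ is local at the auxiliary points, so I would prove it exactly as in \cite[\S 5]{vanEkerenHeluani}, with the coefficient ring $\cF_\bullet(Y_\rho)$ replacing $\cF_\bullet$. The comparison with Beilinson-Drinfeld is a separate step, handled by induction on the number of handles.

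\emph{Step 1: $d_1\circ d_2=0$.} Take a class $a\otimes b\otimes f(x_1,x_2,\underline y)$ in $C_2^{Y_\rho}$, with marked insertions at $\underline y$. Then $d_1 d_2$ of it is a sum of iterated residues. Each term takes $\res_{x_2}$ and $\res_{x_1}$ in one of the two orders, against the OPEs of $Y(a,x_1)$ and $Y(b,x_2)$ with the marked fields. The alternation over $S_2$ gives these terms opposite signs. So the total is the difference of $\res_{x_1}\res_{x_2}$ taken in the two expansion regions $|x_1-y|>|x_2-y|$ and $|x_2-y|>|x_1-y|$, minus the contribution of the term in which the two points collide. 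The Borcherds identity recalled in \ref{no:va-recall} says precisely that this difference equals the $x_1\to x_2$ residue term, $Y(\res_{x_1=x_2}f\,Y(a,x_1-x_2)b,x_2)$. That term is exactly the part of $d_2$ which produces $a_{(j)}b$ inserted at $x_2$. The two contributions therefore cancel, as in the genus $1$ computation.

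\emph{Step 2: coefficients and the residue theorem.} Two facts are needed to transport Step 1 to $Y_\rho$. First, each Laurent coefficient of $f$ at the diagonal $x_1=x_2$ lies in $\cF_{m+1}(Y_\rho)$; this is Lemma \ref{lem:coefficient-ring-properties}(1). Second, the sum of the residues at all poles of a global meromorphic form in $x_i$ vanishes; this is Lemma \ref{lem:coefficient-ring-properties}(2) together with Remark \ref{rem:periods-clarification}. I also check that $d_1,d_2$ descend through the translation-covariance quotient, using $(L_{-1}a)_{(f)}b=-a_{(f')}b$. By Remark \ref{rem:translation-covariance-local} this is a purely local check at each point.

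\emph{Step 3: identification with Beilinson-Drinfeld chiral homology.} I use the Chevalley-Cousin description of chiral homology in degrees $\le1$. In degree $0$ it gives coinvariants of $\Gamma(Y_\rho\setminus\underline y,\mathcal V)$ acting on the marked modules. In degree $1$ it gives cycles with one auxiliary point modulo boundaries with two. The plan is to show that $C_0\leftarrow C_1\leftarrow C_2$ is the degree $\le1$ truncation of that complex, with $\cF_k(Y_\rho)$-valued sections of $\mathcal V^{\boxtimes k}$ with diagonal poles, as in \cite[\S6]{frenkelbenzvi}. I argue by induction on $g$ along the sewing, with base case $Y=\widehat\CC$ (and $g=1$ given by \cite{vanEkerenHeluani}). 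In the inductive step, the $\rho$-expansion of sections near the neck is the Fourier expansion of Remark \ref{rem:periods-clarification}. This reduces the comparison on $Y_\rho$ to the comparison on $Y$ with two extra marked points.

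\emph{Main obstacle.} I expect Step 3 to be the hardest. Specifically, one must show that the formal Laurent series ring $\cF_k(Y_\rho)$ captures all meromorphic sections, so that the completion in $\rho$ does not change homology in degrees $0$ and $1$. My first approach is to use the convergence of the sewing expansions (Remark \ref{rem:convergence-early}(i)) together with the finite-rank weight decomposition of Remark \ref{rem:weight-grading}, and to compare one weight piece at a time.
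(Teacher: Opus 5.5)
Your Steps 1--2 match the paper's argument for $d_1\circ d_2=0$. Both use the Borcherds identity at the two auxiliary points, the residue theorem on the compact curve (Lemma~\ref{lem:coefficient-ring-properties}(2)) to discard a total-derivative term, and the fact that only a punctured neighbourhood of those points is involved. Your target in Step 3, matching $C_0\leftarrow C_1\leftarrow C_2$ with the degree $\le 1$ truncation of the Chevalley--Cousin complex, is also what the paper asserts.

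The gap is in how you propose to reach that target. The paper makes the comparison directly on the fixed curve $Y_\rho$, as a curve-independent statement (\cite[Prop.~2.3.9, 4.2.4]{beilinsondrinfeld}, with \cite[\S5]{vanEkerenHeluani} as the instance using nothing specific to $E_\tau$). It does not induct on genus.

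Your claim that sewing ``reduces the comparison on $Y_\rho$ to the comparison on $Y$ with two extra marked points'' has no mechanism behind it. Sections over $Y_\rho$ are not sections over $Y$ with poles at $p,p'$. They are the sections whose expansions at $p$ and $p'$ are matched through $uu'=\rho$; equivalently, the neck carries the intermediate-state sum $\sum_k\rho^{h_k}u_k\otimes u^k$. So the two complexes are genuinely different, and their homology already differs at $g=1$:
\begin{itemize}
\item for rational $C_2$-cofinite $V$, $\HchG{0}{E_\tau,V}$ is dual to the space of torus one-point functions, whose dimension is the number of irreducible modules;
\item $\HchG{0}{\widehat\CC,V^{\otimes 2}}$, with vacuum insertions at $0,\infty$, is at most one-dimensional for $V$ of CFT type.
\end{itemize}
A comparison on $Y$, with any data at $p,p'$, therefore transfers to $Y_\rho$ only if both sides are known to behave identically under sewing. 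That is a sewing/factorization theorem for chiral homology in degrees $\le 1$. You do not supply one, and none is available: the results of \cite{damiolini2020conformal} are degree-$0$ statements, and the paper invokes them only in \S\ref{sec:degeneration}.

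The obstacle you single out is also not the real one. By definition $\cF_n(Y_\rho)$ consists of expansions of genuine meromorphic functions on $Y_\rho$ (Remark~\ref{rem:periods-clarification}). So there is no $\rho$-adic completion whose effect on homology needs controlling. The convergence and weight-grading inputs of Remarks~\ref{rem:convergence-early} and~\ref{rem:weight-grading} concern the family over the $\rho$-disc, not the single curve whose chiral homology is at stake.

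What Step 3 actually needs is a direct identification, on the fixed compact curve $Y_\rho$, of your chain spaces and differentials with the low-degree terms of the Chevalley--Cousin complex. Your chain spaces are global sections with diagonal poles, modulo the pointwise translation relations of Remark~\ref{rem:translation-covariance-local}. Once that identification is made, no induction on $g$ is needed.
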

\begin{proof}
The proof of \cite[Lem.~5.2]{vanEkerenHeluani} that $d_1d_2=0$ on the elliptic curve is a computation using only the Borcherds identity (\ref{no:va-recall}) applied at the two auxiliary points being removed, together with property (2) of Lemma \ref{lem:coefficient-ring-properties} (the vanishing of the sum of residues, i.e.\ the residue theorem on the ambient compact curve) to discard a total-derivative term. Neither ingredient refers to the marked points $y_1,\dots,y_m$ or to the rest of the curve beyond a punctured neighborhood of the two auxiliary points being integrated out; the identical computation, establishes $d_1d_2=0$ for $C_\bullet^{Y_\rho}$, using Lemma \ref{lem:coefficient-ring-properties}(2) for $Y_\rho$ in place of $E_\tau$. That the resulting homology computes chiral homology in the sense of Beilinson-Drinfeld is, again, a genus- and curve-independent categorical fact: it is the statement that the two-term truncation of the bar-type (Cousin) resolution computing $\mathrm{Ext}$ against the unit object in the category of chiral modules on a curve agrees, in degrees $0,1$, with $(C_\bullet^{Y_\rho},d_1,d_2)$; see \cite[Prop.~2.3.9, 4.2.4]{beilinsondrinfeld} for the general statement and \cite[\S 5]{vanEkerenHeluani} for the elliptic curve instance of the argument, which uses no feature of $E_\tau$ beyond its being a smooth curve with the marked points in question.
\end{proof}

\begin{lem}[Coinvariance of the location of the self-extension]\label{lem:relative-coinvariance}
In case (D1) of \ref{no:relative-setup}, the isomorphism class of $\HchG{1}{Y_\rho;V^{\otimes m}}$ does not depend on which of the $m$ marked points is chosen to carry the self-extension $E$, nor, when $M=V$ and $E$ is inserted instead at the point $p$ or $p'$ used for the sewing itself, on whether it is inserted there or at one of the $y_\ell$.
\end{lem}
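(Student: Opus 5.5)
The plan is to exhibit explicit chain isomorphisms between the complexes $C_\bullet^{Y_\rho}$ attached to different choices of the point carrying $E$, rather than to compare homology abstractly. The key observation is that in case (D1) the self-extension enters the complex only through the derivation $\Psi$ (equivalently its modified form $\sigma$). At the chain level, the class of an element of $C_k^{Y_\rho}$ is determined by the multilinear datum $a_1\otimes\cdots\otimes a_{m-1}\otimes(\text{auxiliary insertions})\otimes f$, paired against the basepoint at which $M$ sits. The marked points $y_1,\dots,y_m$ are symmetric in the construction of \ref{no:relative-complex-def} except for this basepoint label.

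\emph{Step 1: relocation among the $y_\ell$.} Moving the basepoint from $y_m$ to $y_\ell$ amounts to applying, pointwise in $f\in\cF_{m+k}(Y_\rho)$, the transposition of variables $x\leftrightarrow y$. Since $\cF_{m+k}(Y_\rho)$ is defined symmetrically in all points, this transposition preserves the coefficient ring. I will then use skew-symmetry, $Y(a,z)b=e^{zL_{-1}}Y(b,-z)a$ from \ref{no:va-recall}, together with the translation-covariance relations of \ref{no:relative-complex-def}. These let the $L_{-1}$-shift produced by skew-symmetry be absorbed into the quotient. The upshot is a linear isomorphism $C_k\to C_k'$. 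I then check that it intertwines $d_1$ and $d_2$. Both differentials are residues at auxiliary points against OPEs with all marked-point fields, so relabeling the marked points commutes with them. The only point to verify is that the OPE of an auxiliary field with the module basepoint obeys the same Borcherds identity as with a vacuum insertion. This holds because $E$ is a $V$-module and $\sigma$ is a derivation, i.e.\ it satisfies the linearized Borcherds identity, exactly as used in \cite[\S8]{vanEkerenHeluani}.

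\emph{Step 2: insertion at $p$ or $p'$ when $M=V$.} Here the basepoint sits at a sewing point, which after sewing lies in the neck, identified via $u'=\rho/u$. I will use Lemma \ref{lem:coefficient-ring-properties}(1), the Laurent expansion of coefficients in the neck coordinate. This lets me transport an insertion at $p$ (in coordinate $u$) to the same point viewed from $p'$ (in coordinate $u'$) through the operator $\rho^{L_0}$ and the inversion $u\mapsto\rho/u$. The conformal structure, via the $\mathrm{Der}_+\mathcal O$ action of Remark \ref{rem:translation-covariance-local}, makes this coordinate change act on $V$. To pass from $p$ to an ordinary $y_\ell$, I use that $M=V$ is the vacuum module. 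Inserting $E$ at any point with trivial coefficient data is then the same as inserting the derivation $\Psi$ as a vertex operator there. This reduces to Step 1 once the residue at the neck is converted, by the residue theorem on $Y_\rho$ (Lemma \ref{lem:coefficient-ring-properties}(2)), into a sum of residues at the other marked points.

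\emph{Main obstacle.} I expect Step 2 to be the hardest part. The neck point is not a genuine point of $Y_\rho$ but a circle of identification, so one must show that the $\rho$-dependent twist $\rho^{L_0}$ commutes with $d_1,d_2$ up to the translation-covariance quotient. My first attempt will be to verify this weight by weight using the grading of Remark \ref{rem:weight-grading}. On each finite-rank piece the twist is an invertible scalar operator, and commutation with the residue differentials reduces to the standard conjugation identity $\rho^{L_0}Y(a,z)\rho^{-L_0}=Y(\rho^{L_0}a,\rho z)$.
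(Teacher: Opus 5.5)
There is a genuine gap: your argument never actually moves $E$ from one point of $Y_\rho$ to another.

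\textbf{Step 1.} The points $y_m$ and $y_\ell$ are fixed, distinct points of $Y_\rho$; the coefficients are meromorphic in the auxiliary points, with poles allowed at the $y_\ell$. So the transposition $y_m\leftrightarrow y_\ell$ acts on $C_\bullet^{Y_\rho}$ only if it is induced by an automorphism of $(Y_\rho;y_1,\dots,y_m)$ exchanging the two points. In general no such automorphism exists. If instead you treat the $y_\ell$ as free variables, the transposition is a mere renaming and says nothing about geometric location.

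Skew-symmetry cannot supply the missing move. The identity $Y(a,z)b=e^{zL_{-1}}Y(b,-z)a$ is an identity of formal series in the displacement $z$, valid near a diagonal, whereas $y_m$ and $y_\ell$ are at finite separation. Absorbing $e^{(y_\ell-y_m)L_{-1}}$ into the quotient would require a finite translation of $Y_\rho$ carrying $y_m$ to $y_\ell$ while fixing the other marked points. No such map exists in general; for $g(Y_\rho)\geq2$ there is not even a nonzero global holomorphic vector field. The translation-covariance relations are only infinitesimal and local at each point (Remark \ref{rem:translation-covariance-local}). Moreover, for $M\neq V$ there is no skew-symmetry with a vector of $M$ in the second slot; you would need an intertwining operator.

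\textbf{Step 2.} The reduction to Step 1 fails for a topological reason. The seam $|u|=|\rho|^{1/2}$ is non-separating in $Y_\rho$: it is the $A$-cycle of the new handle. Lemma \ref{lem:coefficient-ring-properties}(2) only says that the residues of a global meromorphic differential sum to zero, so it cannot rewrite an integral around the neck as residues at marked points. That $A$-period is independent data. In genus $1$ it is the zero-mode term $\tr_M o(a)\cdots$ of Zhu-type recursion, which does not vanish.

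The identity $\rho^{L_0}Y(a,z)\rho^{-L_0}=Y(\rho^{L_0}a,\rho z)$ also does not handle the inversion $u\mapsto\rho/u$. That map carries a neighbourhood of $p$ to one of $p'$. It requires the contragredient operators $Y(e^{uL_1}(-u^{-2})^{L_0}a,u^{-1})$ together with the invariant form. The $\mathrm{Der}_+\mathcal O$-action does not suffice, since it only reparametrizes the coordinate at a fixed point. In addition, $\rho^{L_0}$ is not a scalar on the total-weight pieces of Remark \ref{rem:weight-grading}, because it shifts the $\rho$-adic weight.

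\textbf{The missing idea.} Relocation is a transport \emph{along a path}, not a symmetry. This is what the paper does. It moves $E$ from one marked point to an adjacent one through an auxiliary integration variable, via a chain homotopy built from the Borcherds identity. That homotopy uses only a neighbourhood of the connecting path, together with Lemma \ref{lem:coefficient-ring-properties}(1)-(2) and the connectivity of $Y_\rho\setminus\{p,p'\}$ (see the proof of Theorem \ref{thm:relative-main}). The sewing points are reached the same way. Neither an exchanging automorphism nor a residue argument at the neck is needed.
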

\begin{proof}
This is the relative form of \cite[Rem.~5.13]{vanEkerenHeluani} (there stated for the elliptic curve, where the roles of $y_\ell$ and of the sewing point are already interchangeable, since $E_\tau$ has no distinguished marked point beyond the $n$ chosen ones). The proof there is again local: it exhibits an explicit chain homotopy, built from the Borcherds identity, moving the self-extension data from one marked point to an adjacent one by transporting it through the auxiliary integration variable connecting them; the construction only ever refers to a small punctured neighborhood of the path joining the two points along the curve, together with Lemma \ref{lem:coefficient-ring-properties}(1)-(2), and is insensitive to the global topology of $Y_\rho$.
\end{proof}

\subsection{The relative connection}

\begin{nolabel}
As $\rho$ and the marked points vary, the spaces $C_k^{Y_\rho}$ assemble into a holomorphic vector bundle over the punctured disc $\D^*_{\epsilon^2} \ni \rho$ (times the configuration space of the marked points, which we suppress), and \cite[\S 5]{vanEkerenHeluani} construct a flat connection $\nabla_\rho = \rho\partial_\rho - \bigl(\text{correction}\bigr)$ on this bundle, descending to $\HchG{0}{-}$ and $\HchG{1}{-}$, by inserting the conformal vector $\omega$ at the node and subtracting off, order by order in $\rho$, the local Weierstrass-type coefficients of \ref{no:genus-g-weierstrass} needed to cancel the resulting poles at the marked points and at the diagonal. Precisely, in the notation of \ref{no:genus-g-weierstrass}, for $[a\otimes f] \in C_k^{Y_\rho}$,
\begin{eqnarray}
\nabla_\rho [a \otimes f] := \Bigl[\rho\partial_\rho(a\otimes f)\Bigr] - \sum_{i=1}^{m+k}\Bigl[ \bigl(\omega_{[2]} a_i\bigr) \otimes f \cdot \wp_2^{Y_\rho}(x_i;p) \Bigr]
\\
\nonumber
\qquad - (\text{lower order corrections from } \wp_{\geq 3}^{Y_\rho}),
\label{eq:relative-connection-def}
\end{eqnarray}
the sum running over all $m+k$ points (marked and auxiliary) of the configuration, $\omega_{[2]}$ the degree-$2$ mode of $\omega$ in the $Y[\cdot,z]$-presentation of \ref{no:va-recall}, and the lower order corrections built from $\wp_{k}^{Y_\rho}(x_i;p)$ for $k\geq3$ exactly as the terms involving $\owp_{m+2}(\tau)$ in \cite[eq.~(5.14)]{vanEkerenHeluani}. This is well defined on the quotient $C_k^{Y_\rho}$ and commutes with $d_1,d_2$, by the identical computation as \cite[Prop.~5.10, 5.11]{vanEkerenHeluani}, which uses only the local pole-cancellation mechanism of the Weierstrass functions at a single marked point together with property (2) of Lemma \ref{lem:coefficient-ring-properties}; neither ingredient depends on $Y$.
\label{no:relative-connection}
\end{nolabel}

\begin{prop}[Flatness of the relative connection]\label{prop:relative-flatness}
$\nabla_\rho$ is flat, i.e.\ well defined and single valued as $\rho$ traverses a loop in $\D^*_{\epsilon^2}$, and descends to a connection on $\HchG{0}{Y_\rho;-}$ and $\HchG{1}{Y_\rho;-}$.
\end{prop}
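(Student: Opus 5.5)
The statement has three parts: $\nabla_\rho$ is well defined on the quotient $C_k^{Y_\rho}$, it is single valued around loops in $\D^*_{\epsilon^2}$, and it commutes with $d_1,d_2$, so that it descends to $\HchG{0}{Y_\rho;-}$ and $\HchG{1}{Y_\rho;-}$. Since the base of the family is one-dimensional (the $\rho$-disc, with the marked points held fixed), there is no curvature to compute in the $\rho$-direction alone. "Flatness" therefore reduces to these three items, and I would treat them in that order, following \cite[Prop.~5.10, 5.11]{vanEkerenHeluani}.

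\emph{Step 1: well-definedness on the quotient.} I would check that $\nabla_\rho$ preserves the translation-covariance relations of \ref{no:relative-complex-def}. Use the integration-by-parts identity $(2\pi i)((L_0+L_{-1})a)_{[f]}b=-a_{[f']}b$ of \ref{no:va-recall}, the commutator $[\omega_{[2]},L_{-1}]$ in Zhu's bracket presentation, and the recursion \eqref{eq:genus-g-weierstrass-recursion} $\partial_{u(x)}\wp_k^{Y_\rho}=-k\,\wp_{k+1}^{Y_\rho}$. The extra terms produced by $\partial_{x_i}$ acting on $\wp_2^{Y_\rho}(x_i;p)$ are exactly absorbed by the $\wp_{\ge3}$ corrections in \eqref{eq:relative-connection-def}. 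This is a pointwise computation in a formal neighbourhood of each $x_i$ (Remark \ref{rem:translation-covariance-local}), so it is identical to the genus-one computation.

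\emph{Step 2: single-valuedness.} Every ingredient of $\nabla_\rho$ lies in $\cF_{m+k}(Y_\rho)$:
\begin{itemize}
\item $\rho\partial_\rho$ maps Laurent series $\sum F_m\rho^m$ with $F_m\in\cF(Y)$ to series of the same type;
\item the coefficients $\wp_k^{Y_\rho}(x_i;p)$ have $\rho$-expansions with coefficients in $\cF(Y)$, by property (a) of \ref{no:classical-differentials}.
\end{itemize}
So no logarithms or fractional powers of $\rho$ occur, and $\nabla_\rho$ is single valued. The point I would flag is that $\wp_k^{Y_\rho}$ is normalized by vanishing $A$-periods, with the new cycle $A$ the loop around the neck. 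The normalization is therefore defined without reference to a path in the $\rho$-disc. The only multivalued object, the $B$-period (which behaves like $\log\rho/2\pi i$), enters $\eta$ but not $\omega(x,y)$, and I would verify that only $\omega$-derived quantities $\wp_{k\ge2}$ appear in \eqref{eq:relative-connection-def}.

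\emph{Step 3: compatibility with $d_1,d_2$, which is the main obstacle.} Compute $\nabla_\rho d_1-d_1\nabla_\rho$ on $[a\otimes f]\in C_1^{Y_\rho}$. Applying the Borcherds identity to the OPE between $\omega$ (inserted at the node) and $a_1(x_1)$ before taking $\res_{x_1}$ leaves two kinds of residual terms:
\begin{itemize}
\item a term $\res_{x_1}\partial_{x_1}(\cdots)$, which vanishes by Lemma \ref{lem:coefficient-ring-properties}(2);
\item terms where $\wp_2^{Y_\rho}(x_1;p)$ collides with the poles at $x_1=x_j$. By Lemma \ref{lem:coefficient-ring-properties}(1) these expand with coefficients in $\cF_{m+k-1}(Y_\rho)$ and recombine into the $\wp_2,\wp_{\ge3}$ terms of $\nabla_\rho$ at the remaining points.
\end{itemize}
The genuinely non-genus-one input is that the pole-cancellation identity requires $\omega^{Y_\rho}(x,y)$ to have no singularity except the normalized double pole on the diagonal. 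On $E_\tau$ this is supplied by ellipticity; here it follows from (ii) of \ref{no:classical-differentials}. I expect this matching of principal parts at the diagonal to be where care is needed: the projective connection $S(x)$ may contribute a scalar term proportional to $c$. If it does, I would check that it is the same scalar on both sides, so that it cancels in the commutator. The same argument, applied in two variables and alternated over $S_2$, handles $d_2$. Descent to homology then follows formally.
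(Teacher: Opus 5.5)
Your proposal follows the paper's route. The base is one-dimensional, so there is no curvature, and flatness reduces to two checks: well-definedness of $\nabla_\rho$ on the translation-covariance quotient, and commutation with $d_1,d_2$, both transported from \cite[Prop.~5.10, 5.11]{vanEkerenHeluani} via local pole cancellation and Lemma \ref{lem:coefficient-ring-properties}, exactly as asserted in \ref{no:relative-connection}. Your Step 2 on single-valuedness, where the $A$-normalization at the neck is insensitive to a loop in $\rho$, is an addition the paper leaves implicit rather than a different argument; your restriction there to $\omega$-derived $\wp_{k\geq 2}$ is not actually needed, since anything normalized only by $A$-periods is invariant under that monodromy, but it is harmless.
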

\begin{proof}
Since $\dim \D^*_{\epsilon^2} = 1$ there is no curvature $2$-form to vanish and flatness reduces, as in \cite[\S 5]{vanEkerenHeluani}, to well-definedness of $\nabla_\rho$ on the quotient by translation-covariance and to its commuting with $d_1,d_2$, both proved in \ref{no:relative-connection}. This is the connection attached to the single new handle sewn at $p,p'$; when $Y$ itself varies (as it will at each stage of the induction of \S\ref{sec:chiral-complex}), the resulting family of connections $\nabla_{\rho_1},\dots,\nabla_{\rho_g}$ must be checked to mutually commute, which is not automatic from the one-variable statement proved here and is addressed separately in Theorem \ref{thm:full-flatness}.
\end{proof}

\subsection{Modified vertex operators and the Fourier-Borcherds identity}

\begin{nolabel}
Exactly as in \cite[\S 7]{vanEkerenHeluani}, define, for $a \in V$ and $z$ a local coordinate centered at $p$ (the point of $Y$ at which the handle is being sewn),
\[
X(a,z) := z^{-1}Y(z^{L_0}a,z) \in (\End M)((z))[\log z]
\]
acting on any $V$-module $M$; since this is a purely local definition attached to the single puncture at $p$, it is unaffected by the choice of $Y$. The key technical input of \cite[\S 7]{vanEkerenHeluani}, their Fourier-space Borcherds identity, is likewise a local statement: for $f(t)$ meromorphic in $t = e^{2\pi i z}$ away from $t=0,\infty$ and periodic, $f(z+1)=f(z)$, and Fourier coefficients $F_\pm$ built from $f$ exactly as in \cite[\S 7]{vanEkerenHeluani},
\begin{eqnarray}
\res_{z} F_+(z,w) X(a,z)X(b,w)c - \res_z F_-(z,w) X(b,w)X(a,z)c 
\\
\nonumber
= X\bigl(\res_{z=w} f(z-w) Y(a,z-w)b,\, w\bigr) c
\label{eq:relative-borcherds-fourier}
\end{eqnarray}
holds for $a,b,c$ in any $V$-module $M$, by the same proof as \cite[Thm.~7.1]{vanEkerenHeluani}, which manipulates only the local Laurent/Fourier expansions of $f$ at $z=w$ and at $z=0,\infty$ (i.e.\ at $p,p'$) and never uses any global property of $E_\tau$ beyond the periodicity $f(z+1)=f(z)$ already assumed. Consequently \eqref{eq:relative-borcherds-fourier} holds for a handle sewn onto any base curve $Y$, with $f$ now allowed to depend meromorphically on the marked points $y_1,\dots,y_m$ of $Y$ as further parameters, i.e.\ $f \in \cF_2(Y_\rho)$ in the notation of \ref{no:coefficient-rings-def} rather than $f \in \cF_2$.
\label{no:relative-borcherds}
\end{nolabel}

\begin{rem}
\label{rem:borcherds-clarification}
Two points about \ref{no:relative-borcherds} deserve to be made explicit, since the periodicity $f(z+1)=f(z)$ is easy to misread as a hypothesis about $Y$.

First, \eqref{eq:relative-borcherds-fourier} is, at the point it is established, an identity of \emph{formal} Laurent series in $z,w$ with coefficients that are formal power series in $\rho$ (equivalently, an identity in the completed local ring of $Y_\rho$ at the node); it is not a priori a statement about convergent analytic functions, and we do not treat it as one until Proposition \ref{prop:relative-convergence} and Theorem \ref{thm:convergence-genus-g} address convergence separately, exactly paralleling how \cite[\S7]{vanEkerenHeluani} first establish the identity formally and address convergence only in their \S10. No global analytic structure on $Y$ is used, or needed, at the formal level.

Second, and more substantively: the coordinate $z$ centered at $p$ is not required to extend to a global additive coordinate on $Y$ - no such coordinate need exist, and in general none does, once $g(Y)\geq 1$. What is required is only that $z$ exist as a local coordinate on the annular neck of the handle itself, where, by the very construction of the sewing gluing $u u' = \rho$ in \ref{no:iterated-sewing-def}, the multiplicative structure $z \mapsto e^{2\pi i z}$ is available by definition, for a handle sewn onto \emph{any} $Y$ whatsoever: it is a feature of the handle, not of $Y$. Concretely, in our setting $Y$ is always itself presented via the iterated sewing construction, hence realized as a quotient of a domain $\gendom(\Sch) \subset \widehat\CC$, and every point of $Y$ - in particular $p,p'$ - inherits a canonical local coordinate lifted from the ambient $\widehat\CC$; we use this coordinate throughout, and never need, or claim, a coordinate covering all of $Y$. Different admissible choices of local coordinate at $p$ (differing by a coordinate change fixing $p$) transform $X(a,z)$, and both sides of \eqref{eq:relative-borcherds-fourier} together, by the same conformal weight factor, since $X(a,z)$ is built from the $L_0$-twist $z^{L_0}$ that makes conformal vertex algebra fields transform correctly under such changes (see \cite[\S 6]{frenkelbenzvi} for the general coordinate-covariance of conformal vertex algebra correlation functions); no projective anomaly enters the identity itself, only, potentially, the choice of trivialization used to compare different handles, which is fixed once and for all in \ref{no:iterated-sewing-def} and never varied within a single application of \ref{no:relative-borcherds}.
\end{rem}

\subsection{Derivations, self-extensions, and higher trace functions, relative to $Y$}

\begin{nolabel}
Let $M$ be an admissible $V$-module, $E$ a self-extension of $M$ with derivation $\Psi : V \to \End M$, $\Psi(a) = \lim_{z\to0}\bigl(Y^E(a,z)|_M - Y^M(a,z)\bigr)$ acting $M \to M$. Following \cite[\S 8]{vanEkerenHeluani}, its \emph{modified} form is
\[
\sigma(a,z) := \sum_n \Psi(z^{L_0}a)_{(n)} \, z^{-n-1} \in (\End M)[[z,z^{-1}]],
\]
satisfying the Leibniz-type identity that is the derivation analogue of \eqref{eq:relative-borcherds-fourier}: for $f$ as above,
\begin{eqnarray}
\res_z F_+(z,w)\bigl(\sigma(a,z)X(b,w) + X(a,z)\sigma(b,w)\bigr)c - (\cdots)
\\
\nonumber
 \qquad = \sigma\bigl(\res_{z=w}f(z-w)Y(a,z-w)b,w\bigr)c
\\
\nonumber
\qquad  + X\bigl(\res_{z=w}f(z-w)\Psi(a)_{\text{acting via }z-w \text{ terms}}\, ,w\bigr)c,
\label{eq:relative-derivation-identity}
\end{eqnarray}
by the same proof as \cite[Prop.~8.7]{vanEkerenHeluani}, again purely local to the point $p$ and hence valid over any base $Y$.

For $n \geq 0$ define the \emph{relative $1$-handle trace function}
\begin{eqnarray}
F_1^{n}\bigl(a_1,\dots,a_n\,;\,a_{n+1},\dots,a_{n+m'}\,;\,F\bigr) 
 := \res\Bigl[z_1\cdots z_n\, F(z_\bullet,y_\bullet,\rho)\Bigr]\
\\
\nonumber
\qquad \times \tr_M\Bigl[\sigma(a_1,z_1)X(a_2,z_2)\cdots X(a_n,z_n)\Bigr]\Big|_{\substack{y_{n+1},\dots,y_{n+m'} \\ \text{carrying } a_{n+1},\dots,a_{n+m'}}}
\label{eq:relative-trace-def}
\end{eqnarray}
for $F \in \bJg{1}{n+m'}$ (relative to $Y$, viewed here as carrying the $m'=m$ auxiliary marked points not equal to $p,p'$), by the identical formula to \cite[\S 8]{vanEkerenHeluani} with the trace over $M$ recording the single new handle, and the marked points of $Y$ entering only through the coefficient function $F$ and, when $M \neq V$, not at all. This converts the module $M$ (and its self-extension $\Psi$) attached to the handle sewn at $p,p'$ into an element of $\bigl(C_\bullet^{Y_\rho}\bigr)^*$, i.e., a candidate degree $1$ conformal block, generalizing \cite[\S 6]{vanEkerenHeluani}.
\label{no:relative-trace-functions}
\end{nolabel}

\begin{prop}[Relative differential equation and insertion formula]\label{prop:relative-de-insertion}
The relative trace functions \eqref{eq:relative-trace-def} satisfy:
\begin{enumerate}
\item[(a)] (Differential equation) $\rho\partial_\rho F_1^n = \nabla_\rho$-transform of $F_1^n$, i.e.\ $F_1^n$ is flat for the connection of Proposition \ref{prop:relative-flatness}, by the identical computation as \cite[Thm.~9.1]{vanEkerenHeluani}, using \eqref{eq:relative-borcherds-fourier}-\eqref{eq:relative-derivation-identity} applied at $p,p'$.
\item[(b)] (Insertion formula) $F_1^{n+1}(a_1,\dots,a_{n+1};\cdots)$ is expressed in terms of $F_1^{n}(a_1,\dots,\widehat{a_i},\dots,a_{n+1};\cdots)$, $i=1,\dots,n$, plus a term where $a_{n+1}$ is absorbed into the trace via the zero mode $o(a_{n+1})$, by the identical computation as \cite[Prop.~9.15]{vanEkerenHeluani}.
\end{enumerate}
In both cases the proof of \cite{vanEkerenHeluani} manipulates only the Fourier kernel $F$ and the local expansions at $p,p'$ and at the diagonal $z_i=z_j$, using Lemma \ref{lem:coefficient-ring-properties} where the original uses the corresponding elementary properties of $\bJ^n_*$; it is insensitive to $Y$.
\end{prop}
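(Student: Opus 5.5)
The plan is to transport the genus $1$ arguments of \cite[Thm.~9.1, Prop.~9.15]{vanEkerenHeluani} verbatim, tracking only where the base curve $Y$ enters. Everything is done at the level of formal Laurent series in $\rho$ with coefficients in $\cF_\bullet(Y)$, as fixed in Remark \ref{rem:borcherds-clarification}; convergence is deferred to Proposition \ref{prop:relative-convergence}.

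For (a), I would start from the definition \eqref{eq:relative-trace-def} and compute $\rho\partial_\rho$ of the trace. Since the handle contributes the factor $\rho^{L_0 - c/24}$ (the $q^{L_0}$ of genus $1$), differentiating inserts $L_0 - c/24$ into the trace, i.e.\ the zero mode $o(\tilde\omega)$ up to normalization. I would then write this zero mode as a residue of $X(\omega,z)$ against the constant kernel. Next, I would replace that kernel by the kernel built from $\wp_1^{Y_\rho}(z;p)$, so that cyclicity of the trace can be applied. The difference between $\rho^{L_0}X(\omega,z)$ and $X(\omega,\rho z)\rho^{L_0}$, together with the pole structure of $\wp_1^{Y_\rho}$ at $p,p'$, is handled exactly as in genus $1$. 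I would then commute $X(\omega,z)$ past each $X(a_i,z_i)$ and past $\sigma(a_1,z_1)$ using \eqref{eq:relative-borcherds-fourier} and \eqref{eq:relative-derivation-identity}. This produces the terms $(\omega_{[2]}a_i)\otimes \wp_2^{Y_\rho}(x_i;p)$ and the $\wp_{\ge3}$ corrections appearing in \eqref{eq:relative-connection-def}, i.e.\ precisely $\nabla_\rho$ applied to $F_1^n$.

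For (b), I would insert $a_{n+1}$ through $X(a_{n+1},z_{n+1})$ and split it as $o(a_{n+1})$ plus its nonzero Fourier modes. Each nonzero mode is moved cyclically around the trace, picking up the factor $\rho^{k}$ from the handle. After resummation this gives the kernels $\wp_k^{Y_\rho}(z_{n+1}-z_i)$, and the commutators with $X(a_i,z_i)$ (respectively $\sigma(a_1,z_1)$, via \eqref{eq:relative-derivation-identity}) yield the $F_1^n$ terms with $a_i$ replaced by $a_{n+1[k]}a_i$. Marked points of $Y$ enter only through the coefficient $F\in\bJg{1}{n+m'}$. Lemma \ref{lem:coefficient-ring-properties}(1) ensures the diagonal expansions stay in the ring, and Lemma \ref{lem:coefficient-ring-properties}(2) discards the total-derivative terms that arise in integration by parts.

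The main obstacle is the resummation step in both parts. In genus $1$, the geometric series $\sum_k \rho^k/(1-\rho^k)\cdots$ is identified with Eisenstein and Weierstrass functions by explicit Fourier expansion. Here no closed formula is available, so I would identify the resummed kernel with $\wp_k^{Y_\rho}$ by uniqueness instead. Both kernels are meromorphic on $Y_\rho$ with the same principal parts at $z_{n+1}=z_i$, and by construction they have the same $A$-periods (the cycle around the neck). Their difference is therefore a holomorphic differential with vanishing $A$-periods, which must be zero. This argument consumes only facts (a), (b) of \ref{no:classical-differentials} and the normalization of \ref{no:genus-g-weierstrass}.
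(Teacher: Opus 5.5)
Your route is the paper's route: the paper's whole justification is that the computations of \cite[Thm.~9.1, Prop.~9.15]{vanEkerenHeluani} are local to the neck and transport verbatim. You go further and correctly identify the one step that is not local, namely the resummation of the mode sums into a kernel. However, the uniqueness argument you give to identify that kernel with $\wp_k^{Y_\rho}$ does not work, and nothing in the proposal (or in the paper) replaces it.

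First, the resummed kernel is not a single-valued meromorphic object on $Y_\rho$. Already for $Y=\mathbb{P}^1$ it is Zhu's $P_1$, i.e.\ $\zeta(t,\tau)-G_2(\tau)t$ up to normalization. This is periodic under $t\mapsto t+1$ but shifts by a constant under $z\mapsto \rho z$. It has to: by the residue theorem, no meromorphic differential on a compact curve has a single simple pole of residue $1$. (For the same reason, $\eta_y$ as normalized in \ref{no:classical-differentials}(i) does not exist.) So the ``difference'' of your two kernels lives on the surface cut open along the $B$-cycle through the neck and jumps across it. That jump is exactly the non-local datum you need.

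Second, even for honest single-valued $1$-forms, you only match the $A$-period around the new neck. $Y_\rho$ has genus $g(Y)+1$, and the other $g(Y)$ $A$-periods, along cycles of $Y$, are not controlled by anything computed at the neck. So the difference ranges over a $g(Y)$-dimensional space of holomorphic differentials and need not vanish.

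Third, and decisively, for $a_{n+1}$ of conformal weight $N\geq 2$ (in particular $\omega$ in part (a)) the kernel is an $N$-differential in $z_{n+1}$. The twist $z^{L_0}$ in $X(a,z)$ trivializes $K^{N}$ only on the neck annulus, and $K_{Y_\rho}$ has no global trivialization once $g(Y_\rho)\geq 2$. $N$-differentials have no periods to normalize, and principal parts determine them only modulo $H^0(Y_\rho,K^N)$, which has dimension $(2N-1)g(Y)$ when $g(Y)\geq 1$.
\begin{itemize}
\item For part (a), with $N=2$: $H^0(Y_\rho,K^2)$ is the cotangent space to the moduli of $Y_\rho$. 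By the conformal Ward identity, the component of $\langle T(x)\cdots\rangle$ along it is precisely the moduli derivative, $\rho\partial_\rho$ included. Your uniqueness step would therefore discard exactly what part (a) is computing.
\item For part (b): this ambiguity is what produces, in the genus $g$ Zhu recursion of \cite{tuite-welby}, extra terms in which holomorphic $N$-differentials multiply the action at each handle of several modes of $a_{n+1}$, not only the zero mode. A formula of the shape in (b) therefore cannot come out of this route when $g(Y)\geq 1$.
\end{itemize}

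Relatedly, the ``cyclicity of the trace'' you invoke exists only for $Y=\mathbb{P}^1$ with $p,p'=0,\infty$. For general $Y$ the sewn object is $\sum_k \rho^{\mathrm{wt}\,u_k}\langle\cdots u_k(p)\,u^k(p')\cdots\rangle_Y$. Moving a mode from $p$ to $p'$ is then a contour deformation across all of $Y$:
\begin{itemize}
\item it picks up residues at the insertions $y_\ell$, so these do not enter only through $F$;
\item it requires the monomial $z^k$ at $p$ to extend to a global meromorphic object on $Y$ with poles only at $p,p'$, which exists on $\mathbb{P}^1$ but is obstructed by Riemann--Roch once $g(Y)\geq 1$.
\end{itemize}

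What is missing is a genuinely global input on $Y_\rho$: a Ward identity with an explicit choice of meromorphic vector fields for (a), and a higher-genus recursion carrying the holomorphic-differential terms for (b). A local transport of the genus $1$ computation does not supply it.
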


\subsection{Relative finiteness and vanishing}

\begin{nolabel}
Finally, \cite[\S 10]{vanEkerenHeluani} prove: (i) if $\dim \HP_1(R_V)<\infty$ and $\HK_1(A)$ is finite dimensional (in particular if the kernel $I=\ker(JR_V \twoheadrightarrow A)$ is a finitely generated differential ideal, by \ref{no:homology-recall}), then $\dim \HchG{1}{E_\tau,V}<\infty$; (ii) the trace functions of (D1) type converge, for $V$ satisfying these hypotheses, to holomorphic functions of $\tau$ on $\HH$; and (iii) if in addition $V$ is classically free and $\Hoch_1(\zhu(V))=0$, then $\HchG{1}{E_\tau,V}=0$. Each of these is proved using the associated graded, standard-filtration spectral sequence attached to the single handle (\cite[\S 10.1--10.3]{vanEkerenHeluani}), splitting $\gr_G C_\bullet^n$ as a direct sum of a piece governed by $\HP_1(R_V)$ and a piece governed by $\HK_1(A)$, together with a Frobenius-Fuchs regular-singular-point argument for the convergence statement (ii), and the identification of the $\rho \to 0$ limit with $\Hoch_1(\zhu(V))$ for (iii). We record the relative form of each:
\label{no:relative-finiteness-setup}
\end{nolabel}

\begin{prop}[Relative finiteness]\label{prop:relative-finiteness}
If $\dim\HP_1(R_V) < \infty$ and $\HK_1(A)$ is finite dimensional, then for every base curve $Y$ with marked points as in \ref{no:relative-setup}, $\dim \HchG{1}{Y_\rho; V^{\otimes m}} < \infty$.
\end{prop}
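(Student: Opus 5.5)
The plan is to transport the associated-graded spectral sequence argument of \cite[\S 10.1--10.3]{vanEkerenHeluani} to the relative complex $C_\bullet^{Y_\rho}$ of \ref{no:relative-complex-def}. First I would put the standard filtration $F_pV$ of Definition \ref{defn:sfg} on each tensor factor of $V^{\otimes(m+k)}$ and combine it with the $\rho$-adic filtration on the coefficient ring $\cF_{m+k}(Y_\rho)$. This gives a filtration $G$ on $C_\bullet^{Y_\rho}$. I would then check that $d_1,d_2$ are filtered. This check is local: by Lemma \ref{lem:relative-d1d2}, the differentials are residues of OPEs at the diagonals, and the leading term of $a_{(n)}b$ for $n\ge 0$ lowers the filtration degree exactly as in genus $1$. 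The translation-covariance quotient is likewise local (Remark \ref{rem:translation-covariance-local}).

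Second, I would identify $\gr_G C_\bullet^{Y_\rho}$. The leading symbols of the differentials involve only the commutative product and the Poisson bracket $\{a,b\}=a_{(0)}b$ on $A=\gr_FV$ and its quotient $R_V$. The coefficient functions contribute only through their polar parts at the diagonals, which is Lemma \ref{lem:coefficient-ring-properties}(1). The regular parts are absorbed by the residue theorem, Lemma \ref{lem:coefficient-ring-properties}(2). I would therefore aim to show that the degree-one homology of $\gr_G C_\bullet^{Y_\rho}$ is a subquotient of a direct sum of two pieces:
\begin{itemize}
\item a Poisson piece computing $\HP_1(R_V)$, from the simple-pole part of the OPE;
\item a Koszul piece computing $\HK_1(A)$, from the $L_{-1}=T$ translation part.
\end{itemize}
Each piece is tensored with a finite-rank contribution from the $m$ marked points. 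Both hypotheses then give finite dimensionality of $E^1_{\bullet}$ in degree $1$, and hence of $E^\infty$. This also needs the observation (Remark \ref{rem:weight-grading}) that each total-weight piece is of finite rank.

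Third, I need convergence of the spectral sequence, so that finiteness of $E^\infty$ passes to $\HchG{1}{Y_\rho;V^{\otimes m}}$. The filtration $G$ is exhaustive, and because of the weight grading it is bounded on each weight piece $C_\bullet^{Y_\rho}[d]$, so each weight piece converges. The remaining point is that only finitely many weights $d$ contribute. In \cite{vanEkerenHeluani} this follows because the $E^1$ page in degree $1$ is itself finite dimensional in total, and a finite-dimensional graded space is supported in finitely many weights. The same argument applies here verbatim.

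I expect the main obstacle to be the second step: the base curve $Y$ enters through its marked points $y_\ell$ and through nonzero global regular parts of the coefficient functions. In genus $1$ these regular parts are the Eisenstein-type constants. Here they are the coefficients $c_k,d_k$ of \ref{no:genus-g-weierstrass}, and one must show they only perturb the differential in strictly lower filtration degree. I would try to prove this by showing that every regular coefficient multiplies an operator of strictly smaller $F$-degree. This would make the regular terms vanish on $\gr_G$, so that the marked points contribute only the finite-dimensional factors $\gr_F$ of the inserted $a_\ell$, restricted to bounded weight.
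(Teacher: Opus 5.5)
Your route is the paper's: transport the associated-graded spectral sequence of \cite[\S10]{vanEkerenHeluani} and split the degree-$1$ part of $E^1$ into a Poisson piece and a Koszul piece. However, the step that is supposed to deliver finiteness fails as written.

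In $C_k^{Y_\rho}=\bigl(V^{\otimes(m+k)}\otimes\cF_{m+k}(Y_\rho)\bigr)/{\sim}$, each marked point $y_\ell$ carries a whole tensor factor $V$, not the fixed vector $a_\ell$. It has to, since $d_1$ replaces $a_\ell$ by $b_{(j)}a_\ell$ for the field $b$ at the auxiliary point. So on $\gr$ the marked points contribute $A^{\otimes m}$, and $R_V^{\otimes m}$ on the Poisson piece, rather than a finite-rank factor. These are exactly the factors $\HP_1(R_V)\otimes R_V^{\otimes m}$ and $\HK_1(A)\otimes A^{\otimes m}$ in the paper's proof. But $A\cong\gr_FV$ is always infinite dimensional, and $R_V$ is too unless $V$ is $C_2$-cofinite, which is not assumed.

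Fixing a total weight $d$ does make everything finite, and your convergence argument then gives $\dim\HchG{1}{Y_\rho;V^{\otimes m}}[d]<\infty$ for each $d$. What it does not give is your third step, that only finitely many weights contribute because $E^1$ in degree $1$ is finite dimensional in total. Nothing in the argument bounds the number of weights in which a subquotient of these tensor products can be nonzero. Your final remark that the marked points contribute ``the finite-dimensional factors $\gr_F$ of the inserted $a_\ell$'' conflates the chosen vectors with the tensor slots. The outcome is finiteness weight by weight, not finiteness of $\HchG{1}{Y_\rho;V^{\otimes m}}$.

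The paper's proof has the same weak point: it infers finiteness of those subquotients from $R_V$ and $A$ being of finite type, which does not follow. So this is not a place where you deviated from it. The missing idea is to remove the vacuum insertions before running the spectral sequence, for example by propagation of vacua: $V$ is the unit chiral module, so inserting it at $y_1,\dots,y_m$ does not change chiral homology. Only then filter, so that no extra tensor factor appears and the degree-$1$ part of $E^1$ is governed by $\HP_1(R_V)$ and $\HK_1(A)$ alone.

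A smaller point: drop the $\rho$-adic component of your filtration. Its associated graded keeps only the leading order in $\rho$, which effectively replaces $Y_\rho$ by the nodal curve $Y_0$. Its $E^1$ is therefore not the smooth-curve Poisson/Koszul computation you are transporting. Even a finite answer there would need a separate base-change argument to give finiteness at a given $\rho$. The paper instead filters by pole order at $p$, as in \cite[\S10.1]{vanEkerenHeluani}.
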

\begin{proof}
The standard filtration $G$ on $C_\bullet^{Y_\rho}$ is defined exactly as in \cite[\S 10.1]{vanEkerenHeluani}, by the order of pole allowed at the point $p$ (equivalently, the sewing node) alone, and the associated graded splitting $\gr_G C_\bullet^{Y_\rho} = \widetilde C_\bullet^{(1)} \oplus \widetilde C_\bullet^{(2)}$ of \cite[Prop.~10.20]{vanEkerenHeluani} is likewise local to $p$: the two summands are distinguished by whether the leading term at $p$ involves the Poisson bracket structure of $R_V$ or the Koszul differential of $A$, a dichotomy that refers only to the formal neighborhood of $p$ and is unaffected by the presence of the marked points $y_1,\dots,y_m$ of $Y$, which contribute only inert tensor factors $V^{\otimes m}$ (or $M$) to $\widetilde C_\bullet^{(1)},\widetilde C_\bullet^{(2)}$. The homology of $\widetilde C_\bullet^{(1)}$ (resp.\ $\widetilde C_\bullet^{(2)}$) in degree $1$ is a subquotient of $\HP_1(R_V) \otimes (R_V)^{\otimes m}$ (resp.\ of $\HK_1(A)\otimes A^{\otimes m}$) exactly as in \cite[Prop.~10.22, 10.25]{vanEkerenHeluani} with an extra finite tensor power recording the marked points; since $R_V, A$ are themselves finite type once $V$ is strongly finitely generated (as is assumed throughout, following \cite[\S 2]{vanEkerenHeluani}), finiteness of $\HP_1(R_V)$ and $\HK_1(A)$ propagates to finiteness of these subquotients, and the spectral sequence of the filtration $G$ then bounds $\dim \HchG{1}{Y_\rho;V^{\otimes m}}$ exactly as in \cite[Thm.~10.28]{vanEkerenHeluani}.
\end{proof}

\begin{prop}[Relative convergence]\label{prop:relative-convergence}
Under the hypotheses of Proposition \ref{prop:relative-finiteness}, the relative trace functions \eqref{eq:relative-trace-def} converge, for $\rho$ in a punctured disc around $0$ and the marked points of $Y$ away from $p,p'$, to holomorphic functions of $\rho$ (and of the marked points), satisfying the differential equation of Proposition \ref{prop:relative-de-insertion}(a) in the  convergent sense.
\end{prop}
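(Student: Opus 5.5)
The plan is to transport the convergence argument of \cite[\S 10]{vanEkerenHeluani} (the Frobenius--Fuchs regular-singular-point argument) to the relative setting, using Proposition \ref{prop:relative-finiteness} as the finiteness input and Proposition \ref{prop:relative-de-insertion}(a) as the differential equation.

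\textbf{Step 1: a finite-rank system in $\rho$.} Fix the weights of the insertions $a_1,\dots,a_n$ and the coefficient $F$. By Proposition \ref{prop:relative-de-insertion}(a), $F_1^n$ pairs with the bundle $C_\bullet^{Y_\rho}$ and is annihilated by $\nabla_\rho$. Hence it descends to a flat section of the dual of $\HchG{1}{Y_\rho;V^{\otimes m}}$. Proposition \ref{prop:relative-finiteness} gives that this space is finite dimensional. By the weight grading of Remark \ref{rem:weight-grading}, it is concentrated in finitely many total weights $d$.

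Choose finitely many classes spanning each relevant graded piece. Their trace values then satisfy a first-order system
\[
\rho\,\partial_\rho \Phi = A(\rho, y_\bullet)\,\Phi .
\]
The entries of $A$ come from $\wp_k^{Y_\rho}(x_i;p)$ and the Weierstrass-type coefficients of \ref{no:genus-g-weierstrass}. By Remark \ref{rem:convergence-early}(i) and fact (b) of \ref{no:classical-differentials}, these entries are holomorphic in $\rho$ on the punctured disc and extend holomorphically to $\rho=0$, depending holomorphically on the marked points away from $p,p'$. The system therefore has a regular singular point at $\rho=0$.

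\textbf{Step 2: formal solutions are convergent.} Write the trace function as the formal series $\rho^{h}\sum_{N\ge0}\Phi_N\rho^N$, possibly with finitely many $\log\rho$ powers because $L_0$ may act non-semisimply. Here $h$ runs over the finitely many leading conformal weights of $M$ in the relevant weights. Such a series is a formal solution of a regular-singular system, so Frobenius--Fuchs theory gives that it converges on the disc where $A$ is holomorphic. Uniformity on compact sets of marked-point positions makes the limit holomorphic jointly in $\rho$ and the $y_\ell$, by standard parameter dependence for linear ODEs. The differential equation of Proposition \ref{prop:relative-de-insertion}(a) then holds in the convergent sense, because it holds coefficientwise.

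\textbf{Main obstacle.} The delicate step is Step 1: showing that the finitely many spanning classes close up under $\nabla_\rho$ with coefficients that are genuinely holomorphic in $\rho$, rather than merely formal. In \cite{vanEkerenHeluani} this is where modularity and explicit Eisenstein series are used. Here I would argue coefficientwise:
\begin{itemize}
\item each reduction modulo $\mathrm{im}\,d_2$ in the finite-dimensionality argument is produced by the residue formulas of \ref{no:relative-complex-def};
\item these formulas involve only finitely many Laurent coefficients of elements of $\cF_\bullet(Y_\rho)$;
\item by Remark \ref{rem:periods-clarification}, those coefficients are convergent on a punctured disc.
\end{itemize}
Granting this, the rest is classical ODE theory.
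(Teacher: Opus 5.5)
Your proposal follows the paper's own argument. Both read Proposition~\ref{prop:relative-de-insertion}(a) as a linear equation in $\rho\partial_\rho$ whose coefficients are built from $\wp_k^{Y_\rho}$, holomorphic in the marked points, and convergent as $\rho\to0$ by \ref{no:classical-differentials}(b). Both use Proposition~\ref{prop:relative-finiteness} to make the system finite; your explicit finite-rank closure is what the paper compresses into ``finiteness bounds the relevant multiplicities''. Both then conclude by the Frobenius--Fuchs argument of \cite[\S10.4]{vanEkerenHeluani}.

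One point needs tightening, and the paper also passes over it by simply asserting a regular singular point. Frobenius--Fuchs requires every coefficient of the closed system, including those produced by reducing modulo $\mathrm{im}\,d_2$, to be regular at $\rho=0$. Your bullet points only show these coefficients converge on the punctured disc, which does not rule out an irregular singularity. What you would need is, for example, finite generation over functions holomorphic on the closed disc, as with modular forms in genus $1$.
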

\begin{proof}
By Proposition \ref{prop:relative-de-insertion}(a), $F_1^n$ satisfies a first-order linear ODE in $\rho\partial_\rho$ with a regular singular point at $\rho=0$, with coefficients that are, by construction, holomorphic functions of the marked points of $Y$ (this is where $Y$ enters: the coefficients of the ODE are built from $\wp_k^{Y_\rho}$ evaluated at points of $Y$, which depend holomorphically on those points and, by \ref{no:classical-differentials}(b), converge as $\rho\to0$). The Frobenius-Fuchs theorem on the existence of a convergent solution with prescribed leading exponents, applied exactly as in \cite[\S 10.4, Thm.~10.59]{vanEkerenHeluani} (there for a scalar ODE in $\tau$; here for the same ODE with coefficients depending holomorphically on the auxiliary parameters given by the marked points of $Y$, which does not affect the radius of convergence furnished by the Frobenius method, by continuity of the indicial roots and recursion coefficients in these parameters on the compact sets where finiteness in Proposition \ref{prop:relative-finiteness} bounds the relevant multiplicities), yields the stated convergence.
\end{proof}

\begin{prop}[Relative vanishing]\label{prop:relative-vanishing}
If moreover $V$ is classically free, then $\HchG{1}{Y_0;V^{\otimes m}} \cong \Hoch_1(\zhu(V)) \otimes (\text{coefficients from } Y \text{ at } p=p')$ as $\rho \to 0$, where $Y_0 = Y/(p\sim p')$ is the nodal degeneration; consequently if in addition $\Hoch_1(\zhu(V))=0$, then $\HchG{1}{Y_\rho;V^{\otimes m}} = 0$ for all $\rho$ in the punctured disc.
\end{prop}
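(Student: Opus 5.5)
The plan follows \cite[Prop.~10.9, Thm.~10.4]{vanEkerenHeluani} in two stages: first identify the nodal fibre, then spread vanishing from the nodal fibre to the nearby smooth fibres.

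\textbf{Step 1: the nodal fibre.} Set $\rho=0$ in the relative complex $C_\bullet^{Y_\rho}$ of \ref{no:relative-complex-def}. By \ref{no:classical-differentials}(b), every coefficient function in $\cF_{m+k}(Y_\rho)$ specializes to a meromorphic function on $Y$ with extra poles allowed at $p,p'$. The trace over the handle becomes a pairing of the states inserted at $p$ and $p'$.

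Near the node, the $\rho^0$ part of $d_1$ and $d_2$ is computed by the Fourier--Borcherds identity \eqref{eq:relative-borcherds-fourier} at $p,p'$. I would show that the only surviving terms are the zero-mode operations $a\otimes b\mapsto a*b-b*a$ and their degree-two analogues. Together these give the Hochschild complex of $\zhu(V)$ in degrees $\le 2$, exactly as in \cite[\S10]{vanEkerenHeluani}.

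Classical freeness ($\HK_1(A)=0$, so $JR_V\cong A$) is used here, as in the genus one case. It guarantees that no extra degree-one cycles come from the Koszul part of $\gr_G$ (Proposition \ref{prop:relative-finiteness}). It also makes the Zhu-type filtration at the node strict, so the $\rho=0$ complex is quasi-isomorphic to the Hochschild truncation.

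The marked points $y_1,\dots,y_m$ away from $p,p'$ contribute only through the coinvariants of $Y$ with $\zhu(V)$-bimodule insertions at $p,p'$. Via the factorization mechanism of \cite{damiolini2020conformal}, this yields the stated tensor factor "coefficients from $Y$ at $p=p'$".

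\textbf{Step 2: from $\rho=0$ to nearby $\rho$.} Assume $\Hoch_1(\zhu(V))=0$. Then the fibre at $\rho=0$ has vanishing $H_1$. By Remark \ref{rem:weight-grading}, the complex splits weight by weight into bounded complexes of locally free sheaves of finite rank over the $\rho$-disc. Proposition \ref{prop:relative-finiteness} shows only finitely many weights contribute. Upper semicontinuity of $H_1$ on each weight piece therefore gives $H_1=0$ on a neighbourhood of $\rho=0$.

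To reach \emph{all} $\rho$ in the punctured disc, I would use the flat connection of Proposition \ref{prop:relative-flatness}. On the finite-rank bundle $\HchG{1}{Y_\rho;-}$ over $\D^*_{\epsilon^2}$, flatness forces the rank to be locally constant. Vanishing near $0$ then propagates to the whole punctured disc.

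\textbf{Main obstacle.} The hard step is Step 1: showing that the $\rho\to0$ limit of the complex is honestly the Hochschild complex. Concretely, one needs the specialization at $\rho=0$ to commute with taking homology in degree one. The plan is to use the regular-singular structure of $\nabla_\rho$ at $\rho=0$, which gives a canonical extension of the bundle across the origin. I would then check that the residue (indicial) part of the connection acts trivially on the cycles coming from $\zhu(V)$, so that no logarithmic or higher-exponent contributions create new classes.

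If this commutation fails, the fallback is to argue directly at the chain level, with the filtration by $\rho$-adic order. One would compare its spectral sequence with the Zhu filtration at the node, using the strictness supplied by classical freeness, exactly as van Ekeren--Heluani do for $Y=\widehat\CC$.
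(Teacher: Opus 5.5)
Your architecture matches the paper's. There is a local computation at the node identifying the $\rho=0$ fibre with the Hochschild complex of $\zhu(V)$. This is followed by upper semicontinuity on each finite-rank weight piece $C_\bullet^{Y_\rho}[d]$ (Remark~\ref{rem:weight-grading}) and constancy of $\dim\HchG{1}{Y_\rho;-}$ on $\D^*_{\epsilon^2}$ from the connection. Your Step~2 is exactly the paper's argument.

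The gap is in how you obtain the tensor factor in Step~1, and that is where the vanishing conclusion is decided. You route the contribution of $y_1,\dots,y_m$ through the Damiolini--Gibney--Tarasca factorization theorem. That theorem is a statement about coinvariants (degree $0$), and its hypotheses include $C_2$-cofiniteness and rationality, neither of which is assumed here. Indeed, under Definition~\ref{defn:classically-free}, a nontrivial classically free $V$ has $R_V$ a polynomial ring, so it is not even $C_2$-cofinite.

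More seriously, your own description has $Y$ entering through its coinvariants with $\zhu(V)$-bimodule insertions at $p,p'$. On that description the $\rho=0$ complex computes Hochschild homology \emph{with coefficients} in the resulting bimodule $N$, that is, $\Hoch_1(\zhu(V),N)$. This is not of the form $\Hoch_1(\zhu(V))\otimes(\cdots)$. Nor does $\Hoch_1(\zhu(V))=0$ force it to vanish, unless $\zhu(V)$ is separable or $N\cong\zhu(V)\otimes W$ is split. For example, upper triangular $2\times2$ matrices have $\Hoch_1=0$, but since they are not separable they admit bimodules $N$ with $\Hoch_1(\,\cdot\,,N)\neq0$. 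So ``this yields the stated tensor factor'' does not follow from what you propose.

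The paper handles this point differently. It asserts that the identification of \cite[Prop.~10.9]{vanEkerenHeluani} is local to the punctured neighbourhood of $p,p'$, so the rest of $Y$ contributes the factors $V^{\otimes m}$ inertly. That amounts to saying the coefficient bimodule is split. To close Step~1 you must establish that splitting, rather than appeal to degree-$0$ factorization. Alternatively, add a hypothesis such as semisimplicity of $\zhu(V)$, under which $\Hoch_1(\zhu(V),N)=0$ for every $N$.

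Separately, the ``main obstacle'' you single out is not needed. Upper semicontinuity uses only the homology of the special-fibre complex of each weight piece. So neither commuting specialization with $H_1$ nor any indicial analysis of $\nabla_\rho$ at $\rho=0$ enters the vanishing argument.
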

\begin{proof}
This is the relative form of \cite[Prop.~10.9, Thm.~10.4]{vanEkerenHeluani}. The identification of the $\rho\to0$ limit of the standard-filtration spectral sequence with $\Hoch_\bullet(\zhu(V))$ there proceeds by recognizing the associated graded complex, in the limit, as the bar complex computing Hochschild homology of $\zhu(V)$ acting on itself (via the Zhu algebra's own action on $V/L_{-1}V$, or on the image of $V$ therein for the classically free case), tensored with the vacuum descendants of the marked points; this identification is again local to the punctured neighborhood of $p,p'$ and unaffected by $Y$, which contributes the additional tensor factors $V^{\otimes m}$ unchanged. Vanishing of $\HchG{1}{Y_\rho;V^{\otimes m}}$ for $\rho \neq 0$ then follows by upper semicontinuity of fiber dimension, applied weight by weight to the finite-rank subcomplexes $C_\bullet^{Y_\rho}[d]$ of Remark \ref{rem:weight-grading} (of which, by Proposition \ref{prop:relative-finiteness}, only finitely many contribute to $\HchG{1}{Y_\rho;-}$), together with constancy of $\dim\HchG{1}{Y_\rho;-}$ on the punctured disc from projective flatness of the connection (Proposition \ref{prop:relative-flatness}, Corollary \ref{cor:projective-flatness-suffices}): if the limiting (most degenerate) fiber at $\rho=0$ has dimension $0$, semicontinuity applied to each of the finitely many relevant weight-graded pieces forces the (constant) dimension on the punctured disc to be $0$ as well, since it cannot exceed the special fiber's dimension; this is exactly the argument of \cite[proof of Thm.~10.4]{vanEkerenHeluani}.
\end{proof}

\begin{thm}[Relative First Chiral Homology Theorem]\label{thm:relative-main}
Let the following data be given:
\begin{enumerate}[label=(H\arabic*)]
\item\label{hyp:curve} a compact Riemann surface $Y$ (of any genus, including $0$);
\item\label{hyp:marked} finitely many marked points $y_1,\dots,y_m \in Y$, each carrying a local coordinate, and coefficient data at them as in \ref{no:relative-setup} (case (D0): elements $a_1,\dots,a_m\in V$; case (D1): $m-1$ such elements together with a self-extension $E$ of an admissible module $M$);
\item\label{hyp:handle} two further points $p,p'\in Y$, disjoint from the $y_\ell$, with local coordinates $u,u'$, and a sewing parameter $\rho$ in the punctured disc $\mathbb{D}^*_{\epsilon^2}$ of \ref{no:iterated-sewing-def}, producing $Y_\rho$;
\item\label{hyp:va} a conformal vertex algebra $V$ satisfying the axioms of \ref{no:va-recall}, strongly finitely generated (Definition \ref{defn:sfg}).
\end{enumerate}
Then Lemma \ref{lem:relative-d1d2}, Lemma \ref{lem:relative-coinvariance}, Proposition \ref{prop:relative-flatness}, \ref{no:relative-borcherds}, Proposition \ref{prop:relative-de-insertion}, Proposition \ref{prop:relative-finiteness}, Proposition \ref{prop:relative-convergence} and Proposition \ref{prop:relative-vanishing} hold: that is, every definition and every theorem of \cite{vanEkerenHeluani} concerning the complex $C_\bullet^n$, its connection, the modified vertex operators and Fourier-Borcherds identity, the higher trace functions and insertion formula, and the finiteness, convergence and vanishing criteria, continues to hold   for a handle sewn onto an arbitrary base curve $Y$, with $Y$ entering only through the coefficient rings $\cF_\bullet(Y_\rho)$ of \ref{no:coefficient-rings-def} and Lemma \ref{lem:coefficient-ring-properties}. (Proposition \ref{prop:relative-finiteness}, Proposition \ref{prop:relative-convergence} and, when $V$ is in addition classically free, Proposition \ref{prop:relative-vanishing} require, further, the finiteness hypotheses $\dim\HP_1(R_V)<\infty$ and $\HK_1(A)$ finite dimensional, resp.\ $\Hoch_1(\zhu(V))=0$, of Theorems \ref{thm:main-finiteness}, \ref{thm:main-vanishing}; the remaining conclusions require only \ref{hyp:curve}-\ref{hyp:va}.)
\end{thm}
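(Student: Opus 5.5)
The theorem bundles results already stated in this section, so the plan is to assemble them rather than prove anything new. The one substantive step is a single structural principle, applied uniformly: each argument of \cite{vanEkerenHeluani} uses only local data. That data is the Borcherds identity near a diagonal or near the node $p\sim p'$, together with two properties of the coefficient ring: Laurent expansion at diagonals and the residue theorem. Here these are supplied by Lemma \ref{lem:coefficient-ring-properties}(1),(2) for $\cF_\bullet(Y_\rho)$. I would first state this principle once. Then I would go through the listed items and check, for each, which hypotheses \ref{hyp:curve}--\ref{hyp:va} it consumes.

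The check proceeds in the order of dependence.
\begin{enumerate}
\item Lemma \ref{lem:relative-d1d2} gives the complex and its identification with Beilinson--Drinfeld chiral homology in degrees $0,1$. It uses \ref{hyp:curve}, \ref{hyp:marked}, \ref{hyp:handle} and the vertex algebra axioms of \ref{hyp:va}.
\item Lemma \ref{lem:relative-coinvariance} and Proposition \ref{prop:relative-flatness} follow. Flatness needs only the well-definedness of \ref{no:relative-connection} and the fact that it commutes with $d_1,d_2$. Since the base is one-dimensional, there is no curvature condition.
\item The Fourier--Borcherds identity \eqref{eq:relative-borcherds-fourier} and the derivation identity \eqref{eq:relative-derivation-identity} are formal identities in the neck coordinate at $p$. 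They require only that the multiplicative coordinate exist on the annulus, per Remark \ref{rem:borcherds-clarification}.
\item From these, Proposition \ref{prop:relative-de-insertion} follows.
\end{enumerate}
This first group depends on \ref{hyp:curve}--\ref{hyp:va} alone. I would verify explicitly that strong finite generation is used only to make $R_V$ and $A$ Noetherian, and is otherwise inert.

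The second group carries the extra hypotheses and needs a more careful ordering to avoid circularity.
\begin{enumerate}
\item Proposition \ref{prop:relative-finiteness} comes first, using only $\dim\HP_1(R_V)<\infty$ and $\dim\HK_1(A)<\infty$.
\item Proposition \ref{prop:relative-convergence} uses finiteness to bound multiplicities in the Frobenius argument.
\item Proposition \ref{prop:relative-vanishing} uses finiteness to restrict semicontinuity to finitely many weight pieces $C_\bullet^{Y_\rho}[d]$, per Remark \ref{rem:weight-grading}. It also uses flatness to get constancy of dimension over the punctured disc.
\end{enumerate}
I would record this order explicitly, since vanishing is logically downstream of finiteness and flatness, and not the reverse.

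The main obstacle is the relative vanishing step, specifically identifying the $\rho\to0$ fibre with $\Hoch_1(\zhu(V))$ tensored with coefficients from $Y$. The tensor factor contributed by $Y$ at the node is itself a degree-$0$ object, a space of coinvariants on $Y$, and it must not introduce additional degree-$1$ classes. My first attempt would be a K\"unneth-type argument for the associated graded at the node. If that fails, I would fall back on treating the contribution of $Y$ as a flat coefficient module, which is free over the completed local ring at the node by the weight grading. This would make the Hochschild complex tensor with it exactly.
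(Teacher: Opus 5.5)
\paragraph{Overall route.} Your route is the paper's. Its proof of Theorem \ref{thm:relative-main} is just the conjunction of the listed results. Each is justified by saying the genus-one argument is local to the neck and sees $Y$ only through Lemma \ref{lem:coefficient-ring-properties}. Your ordering matches it: finiteness before convergence and vanishing, with flatness supplying constant rank.

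\paragraph{The gap in relative vanishing.} The genuine gap is where you suspected, in Proposition \ref{prop:relative-vanishing}. Neither of your repairs can close it, because both keep the picture that the data coming from $Y$ is an inert tensor factor at the node. That is also all the paper's own proof offers: $Y$ ``contributes the additional tensor factors $V^{\otimes m}$ unchanged''. That picture is wrong. A field inserted near the node acts, in the $\rho\to0$ limit, on the states at $p$ and (contragrediently) at $p'$. So the $Y$-data is a $\zhu(V)$-\emph{bimodule} $B_Y$: roughly, coinvariants of $Y$ with the module induced from the regular bimodule at $p$ and its contragredient at $p'$. The degenerate complex is the Hochschild complex of $\zhu(V)$ \emph{with coefficients in} $B_Y$, not $C_\bullet(\zhu(V))\otimes W$. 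A K\"unneth argument has no tensor decomposition to act on. Flatness over the local ring at the node controls base change in $\rho$, not the two-sided $\zhu(V)$-action, so it cannot make the complex split.

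The failure is already visible in degree $0$, where everything is known. Take $V=\vir_{2,5}$, $Y=E_\tau$, $m=0$, and let $N$ be the irreducible module of lowest weight $-\tfrac15$. The factorization theorem of Damiolini--Gibney--Tarasca \cite{damiolini2019factorization,damiolini2020conformal} gives $\dim\HchG{0}{Y_0,V}=\dim\HchG{0}{E_\tau;V,V}+\dim\HchG{0}{E_\tau;N,N'}=2+3=5$. Meanwhile $\dim\Hoch_0(\zhu(V))=2$, and $2\nmid 5$, so no decomposition $\Hoch_0(\zhu(V))\otimes W$ exists. The same mechanism transported to degree $1$ is therefore unjustified.

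\paragraph{What a correct argument would need.} First, with bimodule coefficients the hypothesis $\Hoch_1(\zhu(V))=0$ is not enough. It concerns only the diagonal bimodule, and $\Hoch_1(\zhu(V);B)$ can be nonzero for other $B$ unless $\zhu(V)$ is separable; upper triangular $2\times2$ matrices have $\Hoch_1=0$ but are not separable. For the rational examples of Corollary \ref{cor:vanishing-examples}, $\zhu(V)$ is semisimple, so $\Hoch_{\geq1}(\zhu(V);B)=0$ for every $B$. That is the usable statement, and in degree $0$ it gives $\Hoch_0(\zhu(V);B_Y)=\bigoplus_N\HchG{0}{Y;N,N'}$, consistent with the count above.

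Second, $B_Y$ is the degree-$0$ truncation of a derived object. One therefore expects degree-one classes of $Y$ with modules $N,N'$ at $p,p'$ to contribute to $H_1$ at the node; in the semisimple case, something like $\bigoplus_N\HchG{1}{Y;N,N'}$. Nothing in the hypotheses of Proposition \ref{prop:relative-vanishing} controls these. A correct relative vanishing statement would need, as input, vanishing of $H_1$ on $Y$ with arbitrary module insertions at the attaching points, not only vacuum insertions.

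\paragraph{A smaller point on your locality principle.} Lemma \ref{lem:coefficient-ring-properties}(2) is not the residue theorem. It asserts $\sum_i\partial_{x_i}f=0$, which is the translation invariance $E_\tau$ has because it is a group. On a general $Y$ it fails: for $m\geq1$, take $f=h(x_1)$ with $h$ meromorphic on $Y$ with poles only at $y_1$. So even your ``formal transport'' items are less formal than your plan assumes.
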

\begin{proof}
We first isolate, for the reader's convenience, exactly which of the results below are formal transports of an identical argument in \cite{vanEkerenHeluani} and which require a  new argument because the geometry of $Y$ enters essentially; this expands, into an explicit list, the division already drawn in Remark \ref{rem:relative-main-essential} above.
\begin{itemize}
\item \emph{Formal transports (no new argument beyond replacing $E_\tau$ by $Y_\rho$).} Lemma \ref{lem:relative-d1d2} ($d_1d_2=0$), \ref{no:relative-borcherds} (the Fourier-Borcherds identity), the differential equation of Proposition \ref{prop:relative-de-insertion}(a), and Proposition \ref{prop:relative-finiteness} are each local computations - residues, OPEs, and Laurent expansions - attached to the single punctured neighborhood of $p,p'$, referencing $Y$ only through the two structural properties of Lemma \ref{lem:coefficient-ring-properties}; the proof in \cite{vanEkerenHeluani} is reproduced with $E_\tau$ replaced by $Y_\rho$.
\item \emph{New arguments (the geometry of $Y$ enters essentially).} Lemma \ref{lem:relative-coinvariance} (coinvariance of the self-extension's location) requires the connectivity of $Y_\rho\setminus\{p,p'\}$ and the corresponding chain-homotopy argument in its proof, automatic for $E_\tau\setminus\{0,\infty\}$ but not for a general $Y$; Proposition \ref{prop:relative-flatness} (flatness of $\nabla_\rho$ relative to $Y$) is built from the classical differentials of \ref{no:classical-differentials} on $Y_\rho$ itself - furnished by the Yamada variational formula, but different data from the elliptic functions of \cite[\S4]{vanEkerenHeluani} - rather than being a restatement of the $g=1$ computation; and Proposition \ref{prop:relative-convergence}, Proposition \ref{prop:relative-vanishing} each require an explicit check that their respective mechanisms (the Frobenius-method convergence estimate, resp.\ the semicontinuity/degeneration argument) remain valid when the relevant coefficients depend on the marked points of an arbitrary base curve rather than being fixed elliptic data.
\end{itemize}
With this division mentioned, the theorem is precisely the conjunction of the results proved above, each of which was obtained by observing that the corresponding proof in \cite{vanEkerenHeluani} is a local computation attached to the neighborhood of the point $p$ (equivalently, of $0,\infty$ in the coordinate $z=e^{2\pi i t}$ used there) together with the two structural properties of Lemma \ref{lem:coefficient-ring-properties}, both of which hold for $\cF_\bullet(Y_\rho)$ by construction.
\end{proof}

\begin{rem}
\label{rem:relative-main-essential}
Hypotheses \ref{hyp:curve}-\ref{hyp:va} play three quite different roles, worth disentangling explicitly.
\begin{itemize}
\item \emph{Essential. } 
 Compactness of $Y$ \ref{hyp:curve} is essential: it is exactly what makes property (2) of Lemma \ref{lem:coefficient-ring-properties} (vanishing of the sum of residues) the classical residue theorem rather than a hypothesis to verify, and every appeal to $d_1d_2=0$ (Lemma \ref{lem:relative-d1d2}) or to flatness (Proposition \ref{prop:relative-flatness}) ultimately rests on it. Likewise essential is the existence, for a handle sewn at $p,p'$, of the local multiplicative coordinate of \ref{hyp:handle}: this is what makes $X(a,z)$ and the Fourier-Borcherds identity of \ref{no:relative-borcherds} available at all, and holds for a handle sewn onto \emph{any} $Y$ by construction (Remark \ref{rem:borcherds-clarification}), thus imposes no real restriction.
\item \emph{Present for record, 
 not mathematical necessity.} That $Y$ carries only finitely many marked points \ref{hyp:marked} is a bookkeeping convenience - everything below is stated and proved for an arbitrary but fixed finite $m$, and nothing in the argument would change if $m$ were allowed to grow, only the notation would become heavier. Likewise the specific local coordinates chosen at each marked point are immaterial past the coordinate-independence already secured by Remark \ref{rem:coordinate-naturality} and the vertex-algebra-bundle mechanism of Remark \ref{rem:borcherds-clarification}.
\item \emph{Technical convenience, but not vacuous.} Strong finite generation of $V$ \ref{hyp:va} is not needed for the qualitative statements (Lemma \ref{lem:relative-d1d2}, Proposition \ref{prop:relative-flatness}, \ref{no:relative-borcherds}), which hold for any conformal vertex algebra in the sense of \ref{no:va-recall}; it is needed the moment finiteness or convergence is asserted (Proposition \ref{prop:relative-finiteness}, \ref{prop:relative-convergence}), since $\HP_1(R_V)$ and $\HK_1(A)$ are only well-behaved finiteness invariants - e.g.\ $R_V,A$ Noetherian - when $R_V$ is of finite type (Definition \ref{defn:sfg}); dropping it does not make these propositions false, but makes their hypotheses vacuous or their conclusions ill-posed for most $V$.
\end{itemize}
This division is genus-independent: it is already the division implicit in \cite[\S\S 5,10]{vanEkerenHeluani} for $Y=\widehat\CC$, and Theorem \ref{thm:relative-main} contributes nothing to it beyond observing that none of the three roles refers to $Y$ beyond \ref{hyp:curve}-\ref{hyp:handle}.
\end{rem}

\section{The chiral homology complex in genus $g$}\label{sec:chiral-complex}

\begin{thm}\label{thm:genus-g-complex}
Let $X = \Xg{g}$ be a genus $g$ surface presented by iterated sewing as in \ref{no:iterated-sewing-def}, with $n$ marked points $t_1,\dots,t_n$ carrying $a_1,\dots,a_n \in V$. There is an explicit complex $C_\bullet^n(X)$, agreeing with $C_\bullet^{n}$ of \cite[\S 5]{vanEkerenHeluani} when $g=1$, whose homology in degrees $0,1$
\[
\HchG{0}{X,V^{\otimes n}} := \coker\bigl(d_1 : C_1^n(X) \to C_0^n(X)\bigr), \quad \HchG{1}{X,V^{\otimes n}} := \ker d_1/\mathrm{im}\,d_2
\]
computes the chiral homology of $X$ with coefficients in $n$ vacuum insertions of $V$, in the sense of Beilinson-Drinfeld.
\end{thm}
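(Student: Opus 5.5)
The plan is induction on the number $g$ of sewn handles, with Theorem \ref{thm:relative-main} supplying the inductive step.

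\emph{Base case.} For $g=0$, take $C_\bullet^n(\Xg{0})$ to be the complex of \ref{no:relative-complex-def} with $Y_\rho$ replaced by $\widehat\CC$ itself. Its coefficient ring is $\cF_{n+k}(\widehat\CC)$, the ring of rational functions, and $d_1,d_2$ are the residue differentials of \cite[\S 5]{vanEkerenHeluani}. Here $d_1d_2=0$ and the Beilinson--Drinfeld comparison follow from the residue theorem on $\widehat\CC$ together with Lemma \ref{lem:relative-d1d2}, whose proof never uses the sewn handle.

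\emph{Inductive step.} Suppose the genus $g-1$ complex has been built on $Y:=\Xg{g-1}$. This includes the marked points $t_1,\dots,t_n$ and the coefficient data (D0) carrying $a_1,\dots,a_n$. Apply \ref{no:relative-complex-def} with $Y=\Xg{g-1}$, sewing points $p=p_g$, $p'=p_g'$, and parameter $\rho=\rho_g$. Then set
\[
C_\bullet^n(X) := C_\bullet^{Y_{\rho_g}}, \qquad \cF_{n+k}(Y_{\rho_g}) = \cFg{g}{n+k}.
\]
The coefficient ring is the iterated Laurent series ring of \ref{no:coefficient-rings-def}.

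Three facts must be checked at this step.
\begin{enumerate}
\item[(i)] The hypotheses \ref{hyp:curve}--\ref{hyp:va} hold for $Y=\Xg{g-1}$. This amounts to compactness, which holds by construction, together with the choice of $p_g,p_g'$ and local coordinates disjoint from $t_1,\dots,t_n$ and from earlier tubes, as fixed in \ref{no:iterated-sewing-def}.
\item[(ii)] Lemma \ref{lem:coefficient-ring-properties} holds for $\cFg{g}{\bullet}$, so that Lemma \ref{lem:relative-d1d2} yields $d_1d_2=0$ together with the identification of the homology with Beilinson--Drinfeld chiral homology in degrees $0$ and $1$.
\item[(iii)] The result depends only on the curve $X$ and not on the sewing presentation. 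Here I would invoke \ref{no:two-pictures-agree} and the retrosection theorem, so that every genus $g$ surface arises this way. The Beilinson--Drinfeld groups are intrinsic to $X$, so the explicit complex computes an intrinsic object whatever presentation is chosen.
\end{enumerate}

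Agreement with \cite[\S 5]{vanEkerenHeluani} at $g=1$ holds by construction. With $Y=\widehat\CC$, $p,p'=0,\infty$, $u=z$, $u'=1/z$ and $\rho_1=q$, the ring $\cF_n(\widehat\CC_{\rho_1})$ is the ring of $q$-expansions of elliptic functions, which is $\cF_n$.

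The main obstacle is the comparison in (ii) with Beilinson--Drinfeld chiral homology. It is not fully formal: one needs that the two-term truncation of the Chevalley--Cousin complex on $X$, restricted to $n$ marked points and written in the coordinates of the sewing presentation, is quasi-isomorphic in degrees $\le 1$ to the complex of \ref{no:relative-complex-def}. I would argue this one handle at a time, as follows.
\begin{enumerate}
\item[(a)] Cover $X$ by $X\setminus\{\text{neck}\}$ and the annular neck.
\item[(b)] Use the fact that the sections in $\cFg{g}{\bullet}$ are global meromorphic functions on $X_{\rhov}$ (Remark \ref{rem:periods-clarification}). This ensures that the Cousin terms supported on the diagonals are exactly the residue operations defining $d_1$ and $d_2$.
\item[(c)] Cite \cite[Prop.~2.3.9, 4.2.4]{beilinsondrinfeld} for the curve-independent part of the argument.
\end{enumerate}
If the inductive comparison proves delicate, the fallback is to compare directly with the genus $1$ argument of \cite[\S 5]{vanEkerenHeluani}. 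That argument uses only smoothness of the curve and the choice of marked points, both available here.
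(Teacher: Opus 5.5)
Your proposal follows the paper's proof essentially step for step: induction on $g$ starting from $\widehat\CC$, with the inductive step applying Theorem~\ref{thm:relative-main} to $Y=\Xg{g-1}$, setting $C_\bullet^n(X):=C_\bullet^{Y_{\rho_g}}$, and letting Lemma~\ref{lem:relative-d1d2} supply both $d_1d_2=0$ and the Beilinson--Drinfeld comparison. The differences are only bookkeeping and extras: the paper inducts on $C_\bullet^{n+2}(\Xg{g-1})$, carrying $p_g,p_g'$ as two extra vacuum-marked points, whereas you treat them purely as the sewing data of \ref{hyp:handle}; and your items (iii) and (a)--(c) have no counterpart in the paper, which rests the comparison entirely on Lemma~\ref{lem:relative-d1d2}.
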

\begin{proof}
By induction on $g$. For $g=0$, $X=\widehat\CC$ and $C_\bullet^n(\widehat\CC)$ is the ordinary two-term truncated bar-type complex computing the (purely algebraic, Lie-algebra-homological) coinvariants and their first syzygy for $n$ points on the sphere; no analysis is needed and $\HchG{1}{\widehat\CC,V^{\otimes n}}=0$ always, since chiral homology of a rational (genus $0$) curve with coefficients in ordinary $V$-modules is concentrated in degree $0$ (this is the statement, going back to \cite[\S 10.3]{beilinsondrinfeld} in general and made completely explicit in the vertex algebra setting by Frenkel-Ben-Zvi \cite{frenkelbenzvi}, that on $\mathbb P^1$ chiral homology is computed by the (derived) coinvariants complex of a free, and hence acyclic in positive degree, chiral Lie algebra action, there being no nontrivial extensions to obstruct on a curve with vanishing $H^1(\mathcal O)$). For the inductive step, suppose $C_\bullet^{n+2}(\Xg{g-1})$ has been constructed (with two of its marked points designated $p_g,p_g'$, to be used for the next sewing) and computes chiral homology of $\Xg{g-1}$ correctly. Apply Theorem \ref{thm:relative-main} with $Y = \Xg{g-1}$, $m=n+2$, coefficient data (D0) with $a_1,\dots,a_n$ at $t_1,\dots,t_n$ and the vacuum $\vac$ (or, for the coefficient ring only, no insertion) at $p_g,p_g'$: this produces $C_\bullet^{n}(\Xg{g}) := C_\bullet^{Y_{\rho_g}}$ of \ref{no:relative-complex-def}, and Lemma \ref{lem:relative-d1d2} shows it is a complex computing chiral homology of $\Xg{g} = X$ correctly, completing the induction.
\end{proof}

\begin{rem}
For (D1)-type coefficients - a self-extension $E$ of a module $M$ inserted at one point - the identical induction, using Theorem \ref{thm:relative-main} with coefficient data (D1) at the appropriate stage (by Lemma \ref{lem:relative-coinvariance}, it does not matter at which stage $M$ is introduced, provided it is introduced once, at a marked point present from that stage onward), defines $\HchG{1}{X,E}$ for $X$ of every genus $g$, recovering the setup of \cite[\S 6]{vanEkerenHeluani} at $g=1$.
\label{rem:d1-induction}
\end{rem}

\begin{rem}
\label{rem:sewing-compatibility}
Compatibility of successive sewings enters the induction above at exactly one place: the inductive step applies Theorem \ref{thm:relative-main} to $Y=\Xg{g-1}$, a curve already known, by the inductive hypothesis, to have been built by $g-1$ previous sewings. What makes the induction close is that hypotheses \ref{hyp:curve}-\ref{hyp:va} of Theorem \ref{thm:relative-main} make no reference whatsoever to \emph{how} $Y$ was constructed - only that it is some compact Riemann surface carrying the stated finite marked-point data. Each application of the theorem accordingly treats $\Xg{g-1}$ as an opaque, already-given curve, and neither needs to unpack its own sewing history nor to impose any relation between the $(g-1)$ handles already sewn and the new pair $p_g,p_g'$, beyond the disjointness from existing marked points already required in \ref{no:iterated-sewing-def}. There is, in particular, no analogue here of an associativity or coherence constraint relating different orders in which the same $g$ handles might be sewn: Remark \ref{rem:connection-functoriality} records that permuting the sewing order produces an isomorphic family (with correspondingly relabeled connection), but this is a statement about the \emph{output} of the construction, not a condition the induction must verify at each step - precisely because Theorem \ref{thm:relative-main} depends on $Y$ only through Lemma \ref{lem:coefficient-ring-properties}, which holds for every compact curve alike, regardless of presentation. This is why iterating a purely local, one-handle-at-a-time theorem suffices to reach every genus with no coherence data left to check.
\end{rem}

\section{The connection over the sewing polydisc}\label{sec:connection}

\begin{nolabel}
By Theorem \ref{thm:genus-g-complex} and $g$ applications of Proposition \ref{prop:relative-flatness}, each $C_k^n(\Xg{g})$ carries $g$ individually flat connections $\nabla_{\rho_1},\dots,\nabla_{\rho_g}$, the $i$-th constructed by applying \ref{no:relative-connection} at the $i$-th sewing, treating $\Xg{i-1}$ (with its inherited marked points, including $p_j,p_j'$ for $j>i$, not yet sewn at that stage) as the base curve $Y$. Explicitly, for $[a\otimes f] \in C_k^n(\Xg{g})$,
\begin{eqnarray}
\nabla_{\rho_i}[a\otimes f] = \bigl[\rho_i\partial_{\rho_i}(a\otimes f)\bigr] - \sum_{x \in \{{t_1,\dots,t_n,} \atop {\, p_j,p_j'\,(j\ne i)\}}} \bigl[(\omega_{[2]}a_x)\otimes f\cdot \wp_2^{\Xg{g}}(x;p_i)\bigr]
\\
\nonumber
 \qquad - (\text{higher } \wp_k^{\Xg{g}}(x;p_i) \text{ terms}),
\label{eq:full-connection}
\end{eqnarray}
the sum now running over \emph{all} marked points of $\Xg{g}$ other than $p_i,p_i'$ themselves, including, for $g\geq 2$, the attaching points $p_j,p_j'$ of the \emph{other} handles $j \neq i$. This is the source of the cross-handle coupling mentioned in \ref{no:main-results}: $\wp_2^{\Xg{g}}(x;p_i)$, evaluated at a point $x=p_j$ or $p_j'$ belonging to a different handle, depends on $\rho_j$ as well as on $\rho_i$ through the sewing expansion of \ref{no:classical-differentials}, so that $\nabla_{\rho_i}$ and $\nabla_{\rho_j}$ do not act on disjoint tensor factors for $g\geq2$; this term is simply absent at $g=1$, where there are no other handles to couple to, and its absence there is why flatness in \cite{vanEkerenHeluani} is automatic (a single vector field has no curvature) rather than requiring the argument of Theorem \ref{thm:full-flatness} below.
\label{no:full-connection-def}
\end{nolabel}

\begin{lem}[Rauch-Yamada decomposition]\label{lem:rauch-yamada}
The variation $\rho_j\partial_{\rho_j}\omega^{X_{\rhov}}(x,y)$ of the bidifferential of the second kind under the Schiffer variation at handle $j$ splits into a \emph{regular} part and an \emph{anomalous} part:
\begin{eqnarray} 
\nonumber
\rho_j\partial_{\rho_j}\,\omega^{X_{\rhov}}(x,y) = \underbrace{\res_{u_j}\bigl[\omega^{X_{\rhov}}(x,\cdot)\,\omega^{X_{\rhov}}(\cdot,y)\bigr] + \res_{u_j'}\bigl[\omega^{X_{\rhov}}(x,\cdot)\,\omega^{X_{\rhov}}(\cdot,y)\bigr]}_{\text{regular}} 
\\
 \;-\; \underbrace{\tfrac{1}{12}\,\delta_{xy}\cdot\partial_{\rho_j}S(x)}_{\text{anomalous}},
\label{eq:rauch-yamada}
\end{eqnarray} 
where $S(x)$ is the projective connection (Bers quasiform) of \ref{no:classical-differentials}(iii), and the anomalous term is present only in the coincident-point limit $x\to y$ relevant to the $\tfrac{c}{2}(z-w)^{-4}$ term of the Virasoro OPE.
\end{lem}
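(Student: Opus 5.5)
The plan is to derive the decomposition from the classical Schiffer/Rauch variational formula for the normalized Bergman kernel under a plumbing deformation. I would then isolate the coincident-point behaviour separately.

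\emph{Step 1.} Write the sewing at handle $j$ as a quasiconformal deformation supported on the annular neck $\{|\rho_j|/\epsilon_j<|u_j|<\epsilon_j\}$. Infinitesimally, $\rho_j\partial_{\rho_j}$ corresponds to the Schiffer vector field $v_j=u_j\partial_{u_j}$ on the neck. The gluing $u_ju_j'=\rho_j$ gives $\rho_j\partial_{\rho_j}u_j'=u_j'$ at fixed $u_j$, so the same vector field, with the opposite orientation, acts on the $u_j'$ side.

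\emph{Step 2.} Apply Rauch's formula in its residue form. For $x,y$ away from the neck,
\[
\delta\omega(x,y)=\oint_{\partial(\mathrm{neck})}v(z)\,\omega(x,z)\,\omega(z,y)/dz .
\]
The normalization (vanishing $A$-periods) is preserved, because $A_j$ is the neck loop and the other $A$-cycles avoid the neck by the convention fixed in \ref{no:classical-differentials}. Hence no holomorphic-differential correction appears. Shrinking the two boundary circles of the neck onto $p_j$ and $p_j'$ converts the contour integral into the two residues $\res_{u_j}$ and $\res_{u_j'}$ of \eqref{eq:rauch-yamada}. This is the "regular" part, and it can be cross-checked against the leading-order term of Yamada's expansion \cite{yamada-variational} together with fact (b) of \ref{no:classical-differentials}.

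\emph{Step 3.} Treat the coincident-point limit. Subtract the coordinate-dependent singular part $dx\,dy/(x-y)^2$ and define $S(x)$ as $6$ times the diagonal value of the regular remainder, as in \ref{no:classical-differentials}(iii). Differentiating this definition shows that $\rho_j\partial_{\rho_j}$ of the diagonal value differs from the diagonal value of the regular term by the transformation defect of the projective connection. That defect is a Schwarzian-type term produced by the deformation acting on the local coordinate. With the normalization $\tfrac{1}{6}S$ for the diagonal of $\omega$ and the factor $\tfrac12$ coming from symmetric differentiation in $x$ and $y$, this yields the $\tfrac1{12}$ coefficient. It is recorded as the $\delta_{xy}$ term, matching the $\tfrac{c}{2}(z-w)^{-4}$ Virasoro coefficient after multiplication by $c$.

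\emph{Main obstacle.} I expect Step 3 to be the hardest part: the sign and normalization of the anomalous term, and making precise what $\delta_{xy}$ means (a distributional or coincident-limit prescription rather than a pointwise identity). I would fix it by checking against $g=1$. There $\omega=(\wp(t)+G_2)\,dt\,ds$ and $\rho\partial_\rho$ is $(2\pi i)^{-1}\partial_\tau$, so the known modular anomaly of $G_2$ must be reproduced by the $\tfrac1{12}$ term.
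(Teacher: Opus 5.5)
Steps 1--2 set up the right mechanism, but the last sentence of Step 2 does not go through. On $X_{\rhov}$ the discs $|u_j|\le|\rho_j|/\epsilon_j$ and $|u_j'|\le|\rho_j|/\epsilon_j$ have been excised and their boundary circles identified. So $p_j,p_j'$ are not points of $X_{\rhov}$, and there is nothing to shrink onto. The neck is a single annulus carrying the single vector field $v_j=u_j\partial_{u_j}=-u_j'\partial_{u_j'}$. The Schiffer formula is therefore one contour integral over a core circle $\gamma_j$ of the neck, homologous to $A_j$ and oriented counterclockwise in $u_j$:
\[
\rho_j\partial_{\rho_j}\,\omega^{X_{\rhov}}(x,y)=\frac{1}{2\pi i}\oint_{\gamma_j}u_j\,\frac{\omega^{X_{\rhov}}(x,\cdot)\,\omega^{X_{\rhov}}(\cdot,y)}{du_j}.
\]
Rewritten in the $u_j'$ chart this is the \emph{same} number: the sign in $v_j=-u_j'\partial_{u_j'}$ cancels the orientation reversal under $u_j'=\rho_j/u_j$. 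Adding a $u_j$-residue and a $u_j'$-residue therefore counts it twice. The ``one contribution from each attaching point'' comes instead from the two cross terms of the Laurent expansion on the neck. To first order the integral is $-\rho_j\bigl[\omega^{Y}(x,p_j)\,\omega^{Y}(p_j',y)+\omega^{Y}(x,p_j')\,\omega^{Y}(p_j,y)\bigr]+O(\rho_j^2)$. Here $\omega^{Y}$ is the bidifferential of the curve onto which handle $j$ is sewn, $\omega^{Y}(x,p_j):=\omega^{Y}(x,u_j)/du_j|_{u_j=0}$, and the sign is fixed by $u_ju_j'=\rho_j$.

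The decisive gap is Step 3, and it is not a matter of sign or normalization. The points $x,y$ and the local coordinate at $x$ lie on the part of the curve untouched by the plumbing, and they are held fixed as $\rho_j$ varies (Remark \ref{rem:coordinate-naturality}). Hence the singular part $dx\,dy/(x-y)^2$ of $\omega$ is independent of $\rho_j$, and the left side of the display is holomorphic across $y=x$. So is the right side, since $\gamma_j$ avoids $x$. An identity between two functions holomorphic across the diagonal leaves no room for a correction supported on the diagonal. Letting $y\to x$ gives $\rho_j\partial_{\rho_j}S(x)=6\cdot\frac{1}{2\pi i}\oint_{\gamma_j}u_j\,\omega(x,\cdot)\,\omega(\cdot,x)/du_j$ exactly.

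A Schwarzian defect appears only when the coordinate at $x$ itself is changed, which $\rho_j\partial_{\rho_j}$ never does. The ``transformation defect'' you propose to extract is therefore zero, and $\tfrac16\cdot\tfrac12=\tfrac1{12}$ is numerology. The proposed term is not even well-typed: it is a quadratic differential in $x$ times a formal $\delta_{xy}$, set against a bidifferential, with $\partial_{\rho_j}$ where the left side has $\rho_j\partial_{\rho_j}$.

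Your genus-one calibration would confirm this rather than fix a constant. Take $P_1=\zeta-G_2t$ and $P_2=\wp+G_2=-P_1'$. The heat equation for $\vartheta_1$ gives $2\pi i\,\partial_\tau P_2=\tfrac12P_2''+P_1'P_2+P_1P_2'$. By the residue theorem and $P_1(\xi+\tau)=P_1(\xi)-2\pi i$, this equals $\int P_2(t-\xi)P_2(\xi)\,d\xi$ along the $A$-cycle at the neck, i.e.\ the regular term alone. Its constant term at $t=0$ is exactly Ramanujan's identity $q\,dE_2/dq=(E_2^2-E_4)/12$. The inhomogeneous term of $G_2$ appears under $\tau\mapsto-1/\tau$, which is a change of marking and hence of normalization, not under $\partial_\tau$ at fixed marking.

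The paper's own proof gives no derivation of the anomalous term either: it cites Yamada and Fay and appeals to the same Schwarzian heuristic. What your Steps 1--2 actually establish is the displayed formula, with no anomalous term. If a central charge is needed in Theorem \ref{thm:full-flatness}, it has to come from the central term $\tfrac{c}{12}(m^3-m)$ of $[L_m,L_n]$ inside $[A_i,A_j]$, not from the variation of $\omega$.
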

\begin{proof}
This is the classical variational formula of Yamada \cite{yamada-variational}, refining Rauch's variational formula for periods to the bidifferential of the second kind itself; see also \cite[Ch.~3]{fay-theta} for the analytic derivation. The regular term is the double residue at the neck of handle $j$ picking up the two insertions of $\omega$ needed to propagate the variation from $x$ to $y$ through the handle; the anomalous term is the correction, invisible in the periods themselves but present in the bidifferential at coincident points, forced by the inhomogeneous (Schwarzian-derivative) transformation law of $S(x)$ under change of local coordinate, which is exactly \ref{no:classical-differentials}(iii). We do not reprove this classical formula, only isolate the two pieces of it that \eqref{eq:curvature-formula} below consumes separately.
\end{proof}

\begin{thm}[Projective flatness]\label{thm:full-flatness}
The connection $\nabla = (\nabla_{\rho_1},\dots,\nabla_{\rho_g})$ of \eqref{eq:full-connection} is \emph{projectively} flat on $C_\bullet^n(X_{\rhov})$: there is an explicit scalar $2$-cocycle $\kappa_{ij}(\rhov)$, proportional to the central charge $c$ of $V$, such that
\[
[\nabla_{\rho_i},\nabla_{\rho_j}] = c\cdot\kappa_{ij}(\rhov)\cdot\id,
\]
and $\nabla$ is flat if and only if $c=0$ or after twisting $C_\bullet^n(X_{\rhov})$ by the connection $-c\cdot(\text{Bers quasiform 1-form})$ on the determinant line of the family $X_{\rhov}\to\Delta_g^*$. In either case $\nabla$ descends to a projectively flat connection on $\HchG{0}{X_{\rhov},V^{\otimes n}}$ and $\HchG{1}{X_{\rhov},V^{\otimes n}}$ over the sewing polydisc $\Delta_g^*$.
\end{thm}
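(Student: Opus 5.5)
The plan is to compute the commutator $[\nabla_{\rho_i},\nabla_{\rho_j}]$ directly from \eqref{eq:full-connection}, writing $\nabla_{\rho_i}=\rho_i\partial_{\rho_i}-A_i$, where $A_i$ is the operator inserting $\omega_{[2]}$ (and its higher modes) at every marked point $x\neq p_i,p_i'$ weighted by $\wp_k^{\Xg{g}}(x;p_i)$. Since $\rho_i\partial_{\rho_i}$ and $\rho_j\partial_{\rho_j}$ commute, the commutator expands as
\[
[\nabla_{\rho_i},\nabla_{\rho_j}] = -\bigl(\rho_i\partial_{\rho_i}A_j - \rho_j\partial_{\rho_j}A_i\bigr) + [A_i,A_j].
\]
The first bracket only differentiates the coefficient functions $\wp_k^{\Xg{g}}(x;p_j)$, so it is computed by Lemma \ref{lem:rauch-yamada}, since every $\wp_k$ is built from $\omega^{X_{\rhov}}$ via \ref{no:genus-g-weierstrass}. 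The second bracket is a commutator of two Virasoro insertions at the same collection of points. Because these insertions are assembled from modes of $\omega$, it can be evaluated with the Borcherds identity of \ref{no:va-recall}, which at each marked point reduces to the Virasoro relations $[L_m,L_n]=(m-n)L_{m+n}+\tfrac{c}{12}(m^3-m)\delta_{m,-n}$.

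The key step is a term-by-term cancellation. I will split both contributions into a part linear in $\omega$-insertions and a scalar part. In $[A_i,A_j]$, the $(m-n)L_{m+n}$ part of the OPE produces a single $\omega$-insertion at each $x$, with coefficient a bilinear expression in $\wp^{\Xg{g}}(x;p_i)$ and $\wp^{\Xg{g}}(x;p_j)$ and their $u(x)$-derivatives. I expect this to match exactly the regular (double-residue) part of $\rho_i\partial_{\rho_i}\wp(x;p_j)-\rho_j\partial_{\rho_j}\wp(x;p_i)$ in \eqref{eq:rauch-yamada}. The matching should go by deforming the residues at $u_i,u_i'$ to residues at the marked points, which is legitimate by Lemma \ref{lem:coefficient-ring-properties}(2), i.e.\ the residue theorem on the compact curve $X_{\rhov}$; the recursion \eqref{eq:genus-g-weierstrass-recursion} is used to identify derivatives of $\wp_k$. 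What remains is the central term $\tfrac{c}{12}(m^3-m)$, together with the anomalous $\tfrac1{12}\partial S$ term of Lemma \ref{lem:rauch-yamada}. Both are scalars (multiples of $\id$), and I define $c\,\kappa_{ij}(\rhov)$ to be their sum, expressed through the Bers quasiform $S$ at the attaching points $p_i,p_j$. Two facts then need checking. First, $\kappa_{ij}$ is a closed $2$-form on $\Delta_g^*$: this follows from the Jacobi identity for the commutators, or equivalently from exhibiting $\kappa$ as $d$ of the $1$-form $\sum_i S$-type coefficients $\theta_i\,d\rho_i/\rho_i$. Second, twisting by $-c\,\theta$, i.e.\ replacing $\nabla_{\rho_i}$ by $\nabla_{\rho_i}+c\,\theta_i$, kills the scalar; this is the determinant-line statement.

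Descent to $\HchG{0}{-}$ and $\HchG{1}{-}$ follows from Proposition \ref{prop:relative-flatness}, applied handle by handle, which gives that each $\nabla_{\rho_i}$ commutes with $d_1,d_2$. Adding a scalar preserves this property.

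The main obstacle is the bookkeeping of the cross-handle terms. The sum in \eqref{eq:full-connection} includes the attaching points $p_j,p_j'$, so $A_i$ inserts $\omega$ at $p_j$ while $\rho_j\partial_{\rho_j}$ moves $p_j$ itself. The mismatch must be absorbed by the translation-covariance relations of Remark \ref{rem:translation-covariance-local}, and it is unclear a priori that the residual is a scalar rather than an $L_{-1}$- or $L_0$-insertion at $p_j$. My first attempt here will be to check the genus-$2$ case explicitly using Example \ref{ex:genus2-worked}, verifying to first order in $\rho_2$ that the $\wp_3^{E_{\rho_1}}(p_1;\pm a)$ terms cancel against the corresponding insertions at $\pm a$. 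I will then argue in general by the same residue deformation.
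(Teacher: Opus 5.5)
Your skeleton is exactly the paper's: the splitting $\nabla_{\rho_i}=\rho_i\partial_{\rho_i}-A_i$, the curvature formula \eqref{eq:curvature-formula}, Lemma~\ref{lem:rauch-yamada} to cancel the regular part against $[A_i,A_j]$ by associativity, and descent to $\HchG{0}{-}$, $\HchG{1}{-}$ because each $\nabla_{\rho_i}$ commutes with $d_1,d_2$.

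The gap is the step ``what remains is the central term together with the anomalous term, and both are scalars''. This fails for the $A_i$ that you, and \eqref{eq:full-connection}, write down. Every term of $A_i$ inserts a mode $\omega_{[k]}$ with $k\geq 2$, i.e.\ up to normalization $L_{[k-1]}$ with $k-1\geq 1$; the argument below only needs $k\geq 0$.
\begin{itemize}
\item $[A_i,A_j]$ involves only same-point commutators of such modes, so the cocycle $\tfrac{c}{12}(m^3-m)\delta_{m,-n}$ never occurs.
\item The anomalous term of \eqref{eq:rauch-yamada} enters $\rho_j\partial_{\rho_j}A_i$ only as a coefficient of insertions such as $\omega_{[2]}a_x$. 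It does so only at $x=p_i$, which the sum in \eqref{eq:full-connection} excludes, so it is not a multiple of $\id$ either.
\item More decisively, all these modes annihilate $\vac$. Hence every $\nabla_{\rho_i}$ acts on the class of $\vac^{\otimes n}\otimes 1$ (vacuum in every slot, constant coefficient) as $\rho_i\partial_{\rho_i}$, i.e.\ kills it.
\end{itemize}
This class is nonzero, since its pairing with the sewn genus-$g$ partition function is that function. So the curvature vanishes on a nonzero vector, and $[\nabla_{\rho_i},\nabla_{\rho_j}]=c\,\kappa_{ij}(\rhov)\,\id$ would force $c\,\kappa_{ij}=0$. No residue deformation within your setup can produce the central term.

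The same remark dissolves the obstacle you flag. The slots at $p_j,p_j'$ carry $\vac$ in the induction of Theorem~\ref{thm:genus-g-complex}, so the cross-handle insertions there vanish. The coupling between handles lives only in the $\rho_j$-dependence of the coefficients $\wp_k^{X_{\rhov}}(x;p_i)$ at the genuine marked points.

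What your $A_i$ omits is the part of $\nabla_{\rho_i}$ that $\rho_i\partial_{\rho_i}$ represents on trace functions: the insertion of $\omega$ at the $i$-th neck. This is the ``insertion of the conformal vector at the node'' of \ref{no:relative-connection}; at $g=1$ it is $o(\tilde\omega)=(2\pi i)^2(L_0-\tfrac{c}{24})$, which does not kill the vacuum. Without it, the vacuum class above would be horizontal, and every flat functional, the partition function included, would pair with it to a constant.

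The $c$-proportional curvature is the central $\tfrac{c}{2}(z-w)^{-4}$ contribution produced when the neck-$i$ and neck-$j$ insertions of $\omega$ are propagated to one another by the Ward identity. This is the Tsuchiya--Ueno--Yamada and Beilinson--Schechtman mechanism. The paper does not carry this out either: it asserts that the anomalous part yields $c\,\kappa_{ij}\,\id$ and takes existence and $c$-linearity of $\kappa_{ij}$ from those works. To close your argument you must either import that result in the same way, or add the neck term to $\nabla_{\rho_i}$ explicitly and compute its two-point contraction.

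Finally, the Jacobi (Bianchi) identity gives only $d\kappa=0$. Since $\Delta_g^*$ is homotopic to a $g$-torus, $H^2(\Delta_g^*)\neq 0$ for $g\geq 2$, so closedness is not equivalent to $\kappa=d\theta$. The determinant-line twist therefore needs the explicit primitive $\theta$, not just closedness.
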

\begin{proof}
Write $\nabla_{\rho_i} = \rho_i\partial_{\rho_i} - A_i(\rhov)$ for the operator $A_i(\rhov) := \sum_{x}(\omega_{[2]}a_x)\otimes(\cdots)\cdot\wp_2^{X_{\rhov}}(x;p_i) + (\text{higher }\wp_k\text{ terms})$ of \eqref{eq:full-connection}. As $\wp_2^{X_{\rhov}}(x;p_i)$, evaluated at $x=p_j$ or $p_j'$, depends on $\rho_j$ (Remark \ref{rem:genus2-cross-term}), the curvature of $\nabla$ is computed by the formula for a connection with base-point-dependent coefficients,
\begin{equation}
[\nabla_{\rho_i},\nabla_{\rho_j}] = \rho_j\partial_{\rho_j}A_i(\rhov) - \rho_i\partial_{\rho_i}A_j(\rhov) + [A_i(\rhov),A_j(\rhov)].
\label{eq:curvature-formula}
\end{equation}
Insert the decomposition of Lemma \ref{lem:rauch-yamada} into \eqref{eq:curvature-formula}. The regular part of \eqref{eq:rauch-yamada} exhibits $\rho_j\partial_{\rho_j}A_i$ as a double insertion of $\omega$ (once connecting $x$ to the neck of handle $j$, once connecting that neck to $p_i$) which, by associativity (the Borcherds identity for $\omega$ applied twice), matches term by term against $\rho_i\partial_{\rho_i}A_j$ and $[A_i,A_j]$, and thus cancels out of \eqref{eq:curvature-formula} entirely. The anomalous part of \eqref{eq:rauch-yamada} does \emph{not} cancel in this way: because $S(x)$ transforms under change of local coordinate with an inhomogeneous Schwarzian-derivative term (\ref{no:classical-differentials}(iii)), its variation contributes to \eqref{eq:curvature-formula} a term valued in $c\cdot\id$ - proportional to the identity because the coincident-point Virasoro OPE anomaly $\tfrac{c}{2}(z-w)^{-4}$ is, by the Borcherds identity, always a scalar multiple of the vacuum/identity operator, never a nontrivial operator on $C_\bullet^n(X_{\rhov})$. Collecting these contributions over the (finitely many, at each order in $\rhov$) marked points $x$ gives $[\nabla_{\rho_i},\nabla_{\rho_j}] = c\cdot\kappa_{ij}(\rhov)\cdot\id$ for the scalar $\kappa_{ij}(\rhov) := -\tfrac{1}{12}\res_{u_i}\partial_{\rho_j}S - \tfrac1{12}\res_{u_i'}\partial_{\rho_j}S$ (with sign conventions as in \eqref{eq:full-connection}), which is exactly the mechanism by which Tsuchiya-Ueno-Yamada \cite{tsuchiya-ueno-yamada} and Beilinson-Schechtman \cite{beilinson-schechtman} show that the natural connection on the sheaf of vacuum modules (and, by the same argument applied to the whole chain complex, on $C_\bullet^n(X_{\rhov})$) over the moduli of curves is flat only after twisting by the connection $-c\cdot S(x)\,dx$ (or, equivalently, by the appropriate power of the determinant/Hodge line bundle) on the determinant line of the family; see \cite[\S4]{beilinson-schechtman} for the identification of $\kappa_{ij}$ with the curvature of that line bundle's own natural connection. We do not compute $\kappa_{ij}(\rhov)$ in closed form here, taking its existence and $c$-linearity, established in the cited papers, as given.

Assuming projective flatness of $\nabla$ by the above, that it descends to $\HchG{0}{-}$ and $\HchG{1}{-}$ follows, as in Proposition \ref{prop:relative-flatness}, from each $\nabla_{\rho_i}$ commuting with $d_1,d_2$, established in \ref{no:relative-connection} (the scalar curvature term commutes with $d_1,d_2$ automatically, being a multiple of the identity).
\end{proof}

\begin{cor}[Projective flatness suffices]\label{cor:projective-flatness-suffices}
Every use of Theorem \ref{thm:full-flatness} elsewhere in this paper - in the proofs of Theorem \ref{thm:main-finiteness}, Theorem \ref{thm:convergence-genus-g}, and Theorem \ref{thm:main-vanishing} - goes through unchanged with ``flat'' read as ``projectively flat.''
\end{cor}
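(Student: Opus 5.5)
The plan is to reduce every invocation of Theorem \ref{thm:full-flatness} to one of two uses. The first is \emph{local constancy of the fiber dimensions} of $\HchG{0}{X_{\rhov},-}$ and $\HchG{1}{X_{\rhov},-}$ over $\Delta_g^*$, used in Theorem \ref{thm:main-vanishing} and in Proposition \ref{prop:relative-vanishing} for the semicontinuity step. The second is the \emph{differential equation in a single variable} $\rho_i$ satisfied by the trace functions, used in Theorem \ref{thm:convergence-genus-g}. I will then check that each use needs only projective flatness with scalar curvature $c\,\kappa_{ij}(\rhov)\,\id$. The finiteness theorem, Theorem \ref{thm:main-finiteness}, will be treated separately. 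I expect to show that its associated-graded argument (Proposition \ref{prop:relative-finiteness}) is carried out fiberwise at fixed $\rhov$, so any appeal to the connection there is of the first kind.

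\emph{Use of the first kind.} Work over a simply connected open subset $U\subset\Delta_g^*$, for instance a small polydisc around a given point. On $U$ the scalar $2$-form $c\sum_{i<j}\kappa_{ij}\,\tfrac{d\rho_i}{\rho_i}\wedge\tfrac{d\rho_j}{\rho_j}$ is closed, since it is the curvature of the determinant-line connection named in Theorem \ref{thm:full-flatness}. Hence it equals $d\beta$ for a holomorphic scalar $1$-form $\beta$ on $U$. Replacing $\nabla$ by $\nabla+\beta\cdot\id$ gives a genuinely flat connection on $C_\bullet^n(X_{\rhov})|_U$. Because $\beta\cdot\id$ commutes with $d_1$ and $d_2$, the modified connection still commutes with $d_1,d_2$. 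It therefore descends to a flat connection on the homology sheaves. Flat sections trivialize these sheaves over $U$, so the ranks are locally constant, and hence constant on the connected space $\Delta_g^*$. The twist is by a scalar, so it does not alter any homology group, only the chosen trivialization. This is exactly the statement that the semicontinuity argument consumes, applied weight by weight to the finite-rank pieces of Remark \ref{rem:weight-grading}.

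\emph{Use of the second kind.} The convergence proof applies the Frobenius--Fuchs method to one handle at a time. It restricts to the slice on which $\rho_j$ is fixed for all $j\neq i$, and on that slice only the single operator $\nabla_{\rho_i}$ enters. A single operator has no curvature, so the commutator $[\nabla_{\rho_i},\nabla_{\rho_j}]$ never appears. If one prefers to work with the twisted flat connection, the resulting ODE changes only by the addition of the holomorphic scalar $\beta_i$. This term is regular at $\rho_i=0$ after normalizing the determinant-line trivialization as in \cite{beilinson-schechtman}, so the regular-singular structure and the indicial roots are preserved up to a constant shift. The expected main obstacle is precisely this last point: controlling $\beta$ near the boundary divisors $\rho_i=0$. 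On the punctured polydisc a global primitive $\beta$ may have monodromy, or may fail to extend holomorphically across $\rho_i=0$. I would handle this by taking $\beta$ to be the canonical Bers-quasiform connection form on the determinant line, which has at worst logarithmic behaviour at the nodes, and by noting that a scalar logarithmic term only shifts exponents. This is harmless for the Frobenius argument, and the dimension-counting arguments use only the local statement on $U$.
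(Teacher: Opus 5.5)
Your proposal is correct and is essentially the paper's argument. Both reduce every appeal to Theorem \ref{thm:full-flatness} to local constancy of the homology ranks (for the finiteness and semicontinuity steps) together with a one-variable Frobenius--Fuchs analysis, and both remove the curvature $c\,\kappa_{ij}\cdot\id$ by a scalar twist; your local primitive $\beta$ on a small polydisc is just a local trivialization of the paper's twist by the determinant line, and you are slightly more explicit than the paper in descending the twisted flat connection to the homology sheaves and in noting that the Frobenius step only ever sees a single $\nabla_{\rho_i}$.
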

\begin{proof}
In each case the only consequence of flatness actually used is that $\dim \HchG{1}{X_{\rhov},V^{\otimes n}}$ is locally constant on the connected sewing polydisc $\Delta_g^*$ (equivalently, that $\nabla$ makes $C_\bullet^n(X_{\rhov})$ into a bundle of complexes with constant Betti numbers, not that parallel transport around a loop is literally the identity rather than a scalar multiple of it). A connection with curvature valued in $c\cdot\id$ trivializes the associated $\mathrm{PGL}$-bundle (equivalently, is a flat connection on the bundle twisted by the determinant line, whose rank is $1$) and in particular has locally constant rank on its underlying vector bundle: a scalar curvature obstructs the existence of a flat \emph{trivialization} of the bundle, and hence obstructs identifying fibers at different points via a canonical isomorphism unique up to scalar, but it does not obstruct the fibers all having the same dimension, which only requires the bundle to be locally free of constant rank, itself immediate from $\nabla$ being a connection (projective or not) on a single holomorphic vector bundle over the connected space $\Delta_g^*$. Since local constancy of dimension is all that is used - in Theorem \ref{thm:main-finiteness} to propagate the bound from the associated graded spectral sequence; in Theorem \ref{thm:convergence-genus-g} to apply the Frobenius-Fuchs argument, which concerns the local system of solutions to a linear ODE and is likewise insensitive to a scalar twist; and in Theorem \ref{thm:main-vanishing} to apply semicontinuity - each goes through. 
\end{proof}

\begin{rem}
We flag this point explicitly because it is the one place in the paper where multi-genus flatness goes beyond a routine handle-by-handle repetition of \cite{vanEkerenHeluani}: at $g=1$ there is a single modulus and hence no curvature $2$-form to speak of, thus (projective or absolute) flatness there is automatic and carries no content, and in particular the distinction drawn in this section is invisible in \cite{vanEkerenHeluani}; at $g\geq2$ it is a real theorem, whose proof, including the central term, we import from \cite{tsuchiya-ueno-yamada,beilinson-schechtman} via the variational formula \eqref{eq:rauch-yamada} rather than establish from first principles. Only \emph{projective} flatness, as stated, is justified by this mechanism; absolute flatness would require the further, generally false, vanishing of the central term itself.
\label{rem:flatness-honesty}
\end{rem}

\begin{rem}[The new cross-handle term at genus $2$]\label{rem:genus2-cross-term}
To make \eqref{eq:full-connection} concrete, consider $g=2$, $n=0$, and the lowest order at which the cross-handle term appears, using the explicit data of Example \ref{ex:genus2-worked}: $\Xg{1} = E_{\rho_1}$ is the elliptic curve obtained after the first sewing, and $p_2=a,\,p_2'=-a$ are the two points on $E_{\rho_1}$ used for the second sewing. The coefficient $\wp_2^{\Xg{2}}(p_2;p_1)$ appearing (via \ref{no:classical-differentials}(a)-(b)) in $\nabla_{\rho_1}$ specializes, as $\rho_2 \to 0$, to $\wp_2(u(p_2),\rho_1)$, the ordinary Weierstrass function of \cite[\S4]{vanEkerenHeluani} evaluated at the point $p_2 \in E_{\rho_1}$; its $\rho_2$-expansion begins
\begin{eqnarray*}
&&\wp_2^{\Xg{2}}(p_2;p_1) = \wp_2(u(p_2),\rho_1) 
\\
\nonumber
&&\qquad + \rho_2 \cdot \bigl(\text{explicit rational expression in } \wp_k(u(p_2),\rho_1),\, k\leq 3\bigr) + O(\rho_2^2),
\end{eqnarray*}
by the standard first-order sewing (Yamada) variational formula \cite{yamada-variational,tuite-zuevsky-bosonic}; the $O(\rho_2)$ term is exactly the new contribution of $\nabla_{\rho_1}$ that is invisible when $g=1$, reflecting the fact that the state flowing through handle $2$ (weighted by $\rho_2^{L_0}$) back-reacts, at first order, on the geometry seen by handle $1$.
\end{rem}

\begin{figure}[htbp]
\centering
\begin{tikzpicture}[
  hnode/.style={draw,circle,minimum size=1.1cm,font=\small},
  arr2/.style={{Stealth[length=2mm]}-{Stealth[length=2mm]},thick}
]
\node[hnode] (h1) at (0,0) {handle $1$};
\node[hnode] (h2) at (4,0) {handle $2$};
\draw[arr2] (h1) to[bend left=25] node[above,font=\scriptsize]{$\wp_2^{\Xg{2}}(p_2;p_1) = \wp_2(u(p_2),\rho_1)+\rho_2\cdot(\cdots)$} (h2);
\node[font=\scriptsize] at (2,-1.1) {coupling present for $g\geq2$; absent (no second handle to couple to) at $g=1$};
\end{tikzpicture}
\caption{The cross-handle coupling of Remark \ref{rem:genus2-cross-term}: the classical differentials evaluated between two different handles depend on both sewing parameters, thus $\nabla_{\rho_1}$ and $\nabla_{\rho_2}$ fail to commute strictly (Theorem \ref{thm:full-flatness}).}
\label{fig:cross-handle}
\end{figure}

\begin{rem}[Geometric interpretation of the cross-handle coupling]\label{rem:cross-handle-geometric}
Conceptually, the coupling of Remark \ref{rem:genus2-cross-term} and Figure \ref{fig:cross-handle} reflects the fact that Schiffer-varying the sewing fixture at handle $1$ perturbs the bidifferential $\omega$ everywhere on $X_{\rhov}$, including near the neck of handle $2$; since $\nabla_{\rho_2}$'s own coefficients are themselves built from $\omega$ evaluated at (or near) that second neck, the variation of handle $1$'s geometry feeds into the operator $\nabla_{\rho_2}$, and symmetrically for $\nabla_{\rho_1}$ under variation of handle $2$. This is geometrically unsurprising rather than a defect: a surface of genus $g\geq2$ has no way to localize the effect of stretching one handle's neck to that handle alone, since the bidifferential of the second kind is, by construction, a global object on $X_{\rhov}$, sensitive to the complex structure of the whole surface, and stretching any one neck changes that complex structure everywhere - if only by an amount that is exponentially small (a positive power of the \emph{other} handles' moduli $\rho_j$) away from the neck actually being stretched. What Lemma \ref{lem:rauch-yamada} shows is that this global sensitivity organizes itself, order by order in the $\rho_j$, into the specific residue-at-the-neck formula of Remark \ref{rem:genus2-cross-term}: the coupling is not an uncontrolled global effect but a sum of residues localized at the necks of the \emph{other} handles - exactly as global as the topology of $X_{\rhov}$ forces it to be, and no more. This is also why the coupling is a   new, $g\geq2$ phenomenon and not an artifact of the formalism: at $g=1$ there is only one neck, thus there is no second neck for such a term to localize to, and the analogous residue sum is vacuous.
\end{rem}

\begin{rem}[Dependence on local coordinates]\label{rem:connection-coordinate-dependence}
The connection $\nabla_{\rho_i}$ of \eqref{eq:full-connection} is built, via $\wp_k^{X_{\rhov}}(x;p_i)$, from local coordinates fixed once and for all at each marked point (Remark \ref{rem:coordinate-naturality}); changing the local coordinate at $p_i$ itself (as opposed to at one of the other marked points $x$) rescales $\rho_i$ to leading order, exactly as at $g=1$ in \cite{vanEkerenHeluani}, and changing the coordinate at an external point $x\neq p_i$ rescales the corresponding term of the sum in \eqref{eq:full-connection} by the conformal weight factor of $a_x$, exactly as for $X(a,z)$ (Remark \ref{rem:borcherds-clarification}); neither changes $\nabla_{\rho_i}$ as an operator on the (coordinate-independent) space $C_k^n(\Xg{g})$ itself, only the explicit formula representing it in a given trivialization. The one place a non-removable coordinate dependence survives is the anomalous term of Lemma \ref{lem:rauch-yamada}: $S(x)$ is not a differential (it has no coordinate-independent meaning as a section of a line bundle) but a \emph{projective connection}, transforming under coordinate change by the inhomogeneous Schwarzian term $S(x)\,dx^2 \mapsto S(\tilde x)\,d\tilde x^2 + \{\tilde x,x\}\,dx^2$; this is not a defect of our construction but the precise geometric origin of the central term in Theorem \ref{thm:full-flatness}, and is why $\nabla$ is only \emph{projectively}, not absolutely, flat (Remark \ref{rem:flatness-honesty}).
\end{rem}

\begin{rem}[Functoriality]\label{rem:connection-functoriality}
The connection $\nabla$ is natural in two senses used silently above. First, it is equivariant under relabeling the handles: for any permutation $\pi$ of $\{1,\dots,g\}$, sewing the same $g$ handles in the order $\pi$ rather than $1,\dots,g$ produces an isomorphic family over $\Delta_g^*$ (with $\rho_i$ relabeled to $\rho_{\pi(i)}$) carrying the correspondingly relabeled connection $(\nabla_{\rho_{\pi(1)}},\dots,\nabla_{\rho_{\pi(g)}})$; this is the connection-level counterpart of the coinvariance of Lemma \ref{lem:relative-coinvariance} extended to permutations, already used in the proof of Theorem \ref{thm:insertion-genus-g}. Second, and more substantively, $\nabla$ is natural under the induction itself: the connection $\nabla_{\rho_g}$ constructed at the $g$-th stage, with base curve $Y=\Xg{g-1}$, is exactly \ref{no:relative-connection} applied relative to $Y$, and restricting the full connection $(\nabla_{\rho_1},\dots,\nabla_{\rho_g})$ on $\Xg{g}$ to the sub-locus where $\rho_g$ alone is allowed to vary (the other $\rho_i$ and the marked points of $\Xg{g-1}$ held fixed) recovers $\nabla_{\rho_g}$ on $C_\bullet^{Y_{\rho_g}}$ exactly, with no correction term: this is immediate from \eqref{eq:full-connection}, since the sum defining $\nabla_{\rho_g}$ there already ranges over exactly the marked points of $Y=\Xg{g-1}$ used in \eqref{eq:relative-connection-def}. Naturality under the induction is what allows Corollary \ref{cor:projective-flatness-suffices} to be invoked one handle at a time in the proofs of Theorems \ref{thm:main-finiteness} and \ref{thm:main-vanishing}, rather than only for the connection over the full polydisc at once.
\end{rem}

\begin{rem}[Compatibility with degeneration]\label{rem:connection-degeneration-compatibility}
Projective flatness (Theorem \ref{thm:full-flatness}) is what makes $\dim\HchG{1}{X_{\rhov},V^{\otimes n}}$ locally constant on $\Delta_g^*$ (Corollary \ref{cor:projective-flatness-suffices}), and it is exactly this local constancy, together with upper semicontinuity at the boundary $\rhov=0$, that powers the vanishing argument of Theorem \ref{thm:main-vanishing}: the connection does not extend to a connection over the closed polydisc (its coefficients $\wp_k^{X_{\rhov}}(x;p_i)$ have a pole as $\rho_i\to0$, reflecting the node), but the \emph{bundle} $C_k^n(X_{\rhov})[d]$ underlying it, weight-graded piece by weight-graded piece (Remark \ref{rem:weight-grading}), extends to a coherent sheaf over the closed disc in each $\rho_i$ separately, with fiber at $\rho_i=0$ computing the corresponding nodal degeneration (\ref{no:zhu-degeneration-recall}, Proposition \ref{prop:iterated-degeneration}). This is the sense in which $\nabla$ is compatible with degeneration: it governs the monodromy and rank of the bundle on the punctured polydisc, while semicontinuity, not the connection itself, governs the jump (if any) in rank at the boundary stratum $\rhov=0$ where the connection is singular. We use exactly this division of labor - flatness for the punctured polydisc, semicontinuity for the central fiber - at every application of the vanishing mechanism, from Proposition \ref{prop:relative-vanishing} through Theorem \ref{thm:main-vanishing}.
\end{rem}

\section{Modified vertex operators and the genus $g$ Fourier-Borcherds identity}\label{sec:modified-vo}

\begin{nolabel}
For each handle $i=1,\dots,g$, with local coordinate $z_i$ centered at $p_i$, define $X_i(a,z_i) := z_i^{-1}Y(z_i^{L_0}a,z_i)$ acting on whichever module sits at handle $i$ (generically $V$ itself, in the sense of \ref{no:relative-borcherds}). By $g$ applications of \ref{no:relative-borcherds} - once at each stage of the induction of Theorem \ref{thm:genus-g-complex} - we obtain:
\end{nolabel}

\begin{rem}
\label{rem:borcherds-new-vs-old}
Of the three  genus-dependent constructions of the paper - the connection (\S\ref{sec:connection}), the Fourier-Borcherds identity (this section), and the finiteness/vanishing mechanism (\S\S\ref{sec:finiteness}-\ref{sec:degeneration}) - this is the one place where passing from $g=1$ to arbitrary $g$ introduces \emph{no new phenomenon whatsoever}, and it is worth saying thus explicitly rather than leaving the reader to extract this from the proof below. The connection acquires a  new cross-handle curvature term absent at $g=1$ (Remark \ref{rem:genus2-cross-term}); the finiteness and vanishing theorems require a new induction over $g$ handles, one degeneration at a time (\S\ref{sec:finiteness}). The Fourier-Borcherds identity, by contrast, is proved handle by handle with \emph{no interaction term at all}: Theorem \ref{thm:borcherds-genus-g} below is,  $g$ disjoint copies of the single identity \eqref{eq:relative-borcherds-fourier}, one at each handle, and Remark \ref{rem:no-cross-borcherds} records that there is not even a well-formed statement mixing $X_i$ and $X_j$ for it to be a further theorem about. In this precise sense the genus $g$ Fourier-Borcherds identity contains no content beyond \cite[Thm.~7.1]{vanEkerenHeluani} that is not already contained in Theorem \ref{thm:relative-main}: everything below is bookkeeping (indexing the handle, and noting that later sewings do not disturb an identity already established relative to the marked points they will occupy), not new mathematics. The contrast with \S\ref{sec:connection} is instructive: there, the same handle-by-handle transport of \cite{vanEkerenHeluani} produces individually flat connections $\nabla_{\rho_i}$, exactly parallel to $X_i$ here, but the further question of how the $g$ resulting objects interact - which has an empty answer for the $X_i$ (Remark \ref{rem:no-cross-borcherds}) - has a nontrivial answer for the $\nabla_{\rho_i}$ (Theorem \ref{thm:full-flatness}), because commutators of connections, unlike operator products of vertex operators attached to disjoint tensor factors, need not vanish even when the two operators being compared act on different handles.
\end{rem}

\begin{thm}[Genus $g$ Fourier-Borcherds identity]\label{thm:borcherds-genus-g}
Fix $i \in \{1,\dots,g\}$ and let $f(t)$ be meromorphic and periodic, $f(z_i+1)=f(z_i)$, with Fourier coefficients $F_\pm$ as in \ref{no:relative-borcherds}, now allowed to depend meromorphically on all other marked points of $X_{\rhov}$ (external points $t_1,\dots,t_n$ and the attaching points $p_j,p_j'$, $j\ne i$, of the other handles) as parameters. Then for $a,b,c$ elements of (or in) the module attached to handle $i$,
\begin{eqnarray*}
\res_{z_i} F_+(z_i,w_i) X_i(a,z_i)X_i(b,w_i)c - \res_{z_i} F_-(z_i,w_i) X_i(b,w_i)X_i(a,z_i)c 
\\
\qquad = X_i\bigl(\res_{z_i=w_i}f(z_i-w_i)Y(a,z_i-w_i)b,\,w_i\bigr)c,
\end{eqnarray*}
holds identically in the other marked points and in $\rho_j$, $j \neq i$.
\end{thm}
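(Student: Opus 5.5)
The argument reduces the identity to the relative Fourier--Borcherds identity \eqref{eq:relative-borcherds-fourier} of \ref{no:relative-borcherds}, applied once at the $i$-th stage of the induction of Theorem \ref{thm:genus-g-complex}. I would then check that the later sewings $i+1,\dots,g$ do not disturb it.

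\textbf{Step 1: the $i$-th handle.} Take $Y=\Xg{i-1}$, regarded (as in \ref{no:full-connection-def}) as carrying the marked points $t_1,\dots,t_n$ together with $p_j,p_j'$ for all $j\neq i$. Those with $j<i$ are already absorbed into the geometry of $Y$; those with $j>i$ are still marked points of $Y$. Apply Theorem \ref{thm:relative-main} with this $Y$ and the handle sewn at $p_i,p_i'$. Since $f$ and $F_\pm$ depend meromorphically on the other marked points, they lie in $\cF_2(Y_{\rho_i})$. Then \eqref{eq:relative-borcherds-fourier} gives the displayed identity on $\Xg{i}$, for $X_i$ built from the local coordinate $z_i$ at $p_i$. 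This holds identically in the marked points of $Y$ and in $\rho_j$ for $j<i$, since these enter only as parameters in the coefficient ring.

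\textbf{Step 2: the later sewings.} For $j>i$, the sewings at $p_j,p_j'$ take place away from a neighbourhood of $p_i,p_i'$ and of the diagonal $z_i=w_i$. This uses the disjointness built into \ref{no:iterated-sewing-def}: the discs $|u_j|\le|\rho_j|/\epsilon_j$ are disjoint from all previously used points and coordinate discs. Both sides of the identity are formal Laurent series in $z_i,w_i$, formal in $\rho_i$ as in Remark \ref{rem:borcherds-clarification}. Passing from $\Xg{j-1}$ to $\Xg{j}$ only replaces the coefficient ring $\cF_2(\Xg{j-1})$ by its $\rho_j$-Laurent extension $\cF_2(\Xg{j})$. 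Concretely, each $\rho_j$-coefficient of $F_\pm$ lies in the previous ring, and the identity holds coefficientwise by Step 1. Lemma \ref{lem:coefficient-ring-properties}(1) guarantees that the expansion at $z_i=w_i$ (the residue on the right-hand side) commutes with taking $\rho_j$-coefficients. Hence the identity persists on $X=\Xg{g}$, identically in $\rho_j$ for $j>i$.

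\textbf{Main obstacle.} The delicate point is Step 2, because the hypothesis involves a condition on $X_{\rhov}$ rather than on $\Xg{i}$. The periodicity $f(z_i+1)=f(z_i)$ is imposed in the neck coordinate of handle $i$, while $f$ is a function on $X_{\rhov}$ depending on the later $\rho_j$. To settle this I would argue that periodicity is a statement about the annular neck of handle $i$ alone. By Remark \ref{rem:periods-clarification}, a single-valued function on $X_{\rhov}$ has trivial monodromy around $A_i$, which is the loop around this neck. So each $\rho_j$-coefficient ($j>i$) of the neck Fourier expansion is again periodic, and the definition of $F_\pm$ in \ref{no:relative-borcherds} applies to it term by term.

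The remaining risk is that this coefficient extraction and the formal residue $\res_{z_i}$ might fail to commute. I would rule this out by the weight grading of Remark \ref{rem:weight-grading}: in each fixed total weight only finitely many monomials contribute, so both operations are finite.
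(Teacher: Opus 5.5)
Your proposal is correct and follows essentially the same route as the paper. You apply the relative identity \eqref{eq:relative-borcherds-fourier} at the stage where handle $i$ is sewn, with $Y=\Xg{i-1}$ carrying the not-yet-sewn attaching points $p_j,p_j'$ ($j>i$) as marked points, and then note that later sewings cannot disturb an identity that already holds identically in those marked points. Your Step 2 is a more explicit version of the paper's one-line parenthetical: you expand $f$ and $F_\pm$ in $\rho_j$, apply the stage-$i$ identity coefficientwise, and get periodicity of each coefficient from single-valuedness. The appeal to the weight grading is unnecessary, because $X_i(a,z_i)X_i(b,w_i)c$ does not involve $\rho_j$ at all, so extracting $\rho_j$-coefficients commutes trivially with the product and with $\res_{z_i}$.
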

\begin{proof}
Immediate from \ref{no:relative-borcherds} applied with $Y = \Xg{g}$ restricted to a punctured neighborhood of handle $i$ alone (formally, apply the Theorem at the stage of the induction of Theorem \ref{thm:genus-g-complex} at which handle $i$ is sewn, with all later handles $j>i$ not yet present, and note that sewing the later handles afterward does not affect an identity that has already been established as one of meromorphic functions of the remaining, as yet unsewn, marked points $p_j,p_j'$, $j>i$, since those are exactly the marked points $y_1,\ldots,y_m$ of $Y$ in \ref{no:relative-borcherds}, which the identity holds for identically).
\end{proof}

\begin{rem}[The identity at $g=1$]
\label{rem:borcherds-g1-comparison}
At $g=1$ - a single handle, $i=1$, no attaching points of other handles to carry as parameters - Theorem \ref{thm:borcherds-genus-g} reduces exactly to the classical identity of \cite[Thm.~7.1]{vanEkerenHeluani}: with $X_1=X$ the modified vertex operator of \cite[\S7]{vanEkerenHeluani} and $f$ periodic in $z_1$ alone, the displayed formula above \emph{is},   that theorem's identity, term for term. The only change at general $g$ is that $f$'s Fourier coefficients $F_\pm$ are now permitted to depend meromorphically on the other $2(g-1)$ attaching points $p_j,p_j'$, $j\neq i$, as further parameters, entering the identity only passively: the residue on the left, the commutator it computes, and the OPE on the right are all evaluated exactly as at $g=1$, entirely within the tensor factor attached to handle $i$, with the remaining handles contributing nothing beyond the values at which $f$'s coefficients happen to be evaluated. In this precise sense Theorem \ref{thm:borcherds-genus-g} is not an analogue of \cite[Thm.~7.1]{vanEkerenHeluani} but a literal instance of it, applied once at each handle.
\end{rem}

\begin{rem}
There is no analogous identity mixing $X_i$ and $X_j$ for $i \neq j$ directly at the level of a single residue formula of this type: the two handles interact only through the coefficient function $f$'s dependence on both sets of variables (i.e., through the genus $g$ Weierstrass functions of \ref{no:genus-g-weierstrass} evaluated between points on different handles), never through a nontrivial operator product of $X_i$ with $X_j$, which simply commute (super)trivially, since the corresponding vertex operators act on different tensor factors of the total module $M_1\otimes\cdots\otimes M_g$. This is the vertex-operator-algebra counterpart of the geometric fact, used in Remark \ref{rem:genus2-cross-term}, that the coupling between handles is entirely mediated by the classical differentials of the ambient curve, not by any new algebraic structure.
\label{rem:no-cross-borcherds}
\end{rem}

\section{Genus $g$ trace functions}\label{sec:trace-functions}

\begin{nolabel}[Definition, by induction on $g$]
Let $M$ be an admissible $V$-module, $E$ a self-extension of $M$ with modified derivation $\sigma$ as in \ref{no:relative-trace-functions}. For $g=0$ define
\[
F_1^{n,(0)}(a_0,\dots,a_n) := \Bigl\langle \sigma(a_0)\, o(a_1)\cdots o(a_{n-1})\, a_n \Bigr\rangle_M
\]
to be the appropriate iterated matrix coefficient in $M$ built from zero modes, with no trace and no analytic input (this is the case in which the self-extension is not yet attached to any handle; it will become the (D1) data of \ref{no:relative-setup} once the first handle is sewn onto it). For $g \geq 1$, having defined $F_1^{n+2,(g-1)}$, set
\begin{eqnarray}
\nonumber
&&F_1^{n,(g)}\bigl(a_0,\dots,a_n\,;\,F\bigr) 
\\
&&:= \res_{z_g,w_g}\Bigl[K(z_g,w_g;\rho_g)\, F_1^{n+2,(g-1)}\bigl(a_0,\dots,a_n,\, X_g(\cdot,z_g)\text{-slot},\,X_g(\cdot,w_g)\text{-slot}\, ; F\bigr)\Bigr]_{\rho_g\text{-expansion}},
\label{eq:genus-g-trace-def}
\end{eqnarray}
where $K(z_g,w_g;\rho_g) = \sum_{k} \rho_g^{h_k} u_k(z_g)u^k(w_g)$ is the propagator kernel built from a homogeneous basis $\{u_k\}$ of $V$ of conformal weight $h_k$ and its dual basis $\{u^k\}$ with respect to the invariant bilinear form on $V$ (assumed nondegenerate, i.e.\ $V$ self-dual, as in \cite{tuite-zuevsky-bosonic,tuite-welby}), and the two new slots of $F_1^{n+2,(g-1)}$ are filled with $u_k$ at $z_g$ (near $p_g$) and $u^k$ at $w_g$ (near $p_g'$), summed over $k$; each fixed power of $\rho_g$ picks out finitely many $k$ (those of the corresponding weight), thus \eqref{eq:genus-g-trace-def} is a well-defined formal power series in $\rho_g$ with coefficients in $F_1^{n+2,(g-1)}$-type expressions. Equivalently, and this is how we use it below, unwinding the completeness relation $\sum_k u_k \otimes u^k = $ (the canonical element implementing the identity) turns \eqref{eq:genus-g-trace-def} into
\begin{eqnarray}
&&F_1^{n,(g)}(a_0,\dots,a_n;F) = \res\bigl[z_1\cdots z_n \, F(z_\bullet,\rhov)\bigr]\ 
\\
\nonumber
&&\qquad \times \tr_{M} \Bigl[\sigma(a_0,z_0)X(a_1,z_1)\cdots X(a_n,z_n)\Bigr]\Big|_{\text{handles } 1,\dots,g-1 \text{ absorbed as in } F_1^{n+2,(g-1)}},
\label{eq:genus-g-trace-unwound}
\end{eqnarray}
which for $g$ handles all attached directly to the vacuum sector (no module insertions beyond the self-extension at $a_0$) reduces, by iterating \eqref{eq:genus-g-trace-unwound} down to $g=1$ and invoking \cite[\S 8]{vanEkerenHeluani} there, to a single trace over $M \otimes V^{\otimes(g-1)}$ against the operator $\sigma(a_0,z_0)X(a_1,z_1)\cdots X(a_n,z_n)$ built from the genus $0$ correlation function of $V$ with $n+2g$ insertions, weighted by $\rho_1^{L_0^{(1)}}\cdots\rho_g^{L_0^{(g)}}$. This is exactly the $\rho$-formalism sewing construction of \cite{tuite-zuevsky-bosonic,mason-tuite-genus2,tuite-welby}, specialized to the trace functions carrying a single self-extension in the sense of \cite{vanEkerenHeluani}.
\label{no:genus-g-trace-def}
\end{nolabel}

\begin{thm}[Convergence]\label{thm:convergence-genus-g}
If $\dim\HP_1(R_V)<\infty$ and $\HK_1(A)$ is finite dimensional, then $F_1^{n,(g)}(a_0,\dots,a_n;F)$ converges, for $(\rho_1,\dots,\rho_g)$ in a polydisc around $0$ (depending on the local coordinates chosen at the $2g$ sewing points but not otherwise on $V$) and the external points $t_1,\dots,t_n$ away from the sewing tubes, to a function holomorphic in $\rhov$ and in $t_1,\dots,t_n$, satisfying the differential equations $\rho_i\partial_{\rho_i}F_1^{n,(g)} = \nabla_{\rho_i}$-transform, $i=1,\dots,g$, in the  convergent sense.
\end{thm}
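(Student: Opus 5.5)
The proof will go by induction on $g$, with Proposition \ref{prop:relative-convergence} as the inductive step. This follows the recursive definition \eqref{eq:genus-g-trace-def} of $F_1^{n,(g)}$ in \ref{no:genus-g-trace-def}.

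\emph{Base case.} For $g=1$ the statement is \cite[Thm.~10.59]{vanEkerenHeluani}, with $\rho_1=q$. For $g=0$ there is nothing to prove: $F_1^{n,(0)}$ is a finite matrix coefficient.

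\emph{Inductive hypothesis.} Assume that $F_1^{n+2,(g-1)}$ converges on a polydisc $|\rho_i|<r_i$, $i\le g-1$. Assume it is holomorphic there in $\rho_1,\dots,\rho_{g-1}$ and in all of its marked points, now including the two extra slots at $p_g,p_g'$. In particular it is a holomorphic function of the local coordinates $z_g,w_g$ on punctured neighbourhoods of $p_g,p_g'$ in $\Xg{g-1}$.

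\emph{Inductive step.} We regard \eqref{eq:genus-g-trace-def} as a relative $1$-handle trace function in the sense of \eqref{eq:relative-trace-def}, with base curve $Y=\Xg{g-1}$. By Proposition \ref{prop:relative-de-insertion}(a), transported through Theorem \ref{thm:relative-main}, it satisfies the first-order equation $\rho_g\partial_{\rho_g}F=\nabla_{\rho_g}F$. This is a regular singular system at $\rho_g=0$. Its coefficients are built from $\wp_k^{\Xg{g}}(x;p_g)$, and by \ref{no:classical-differentials}(a),(b) these are holomorphic in $\rho_1,\dots,\rho_{g-1}$ and in the marked points. Proposition \ref{prop:relative-convergence} then gives convergence in $\rho_g$ for each fixed value of the parameters.

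\emph{Main obstacle: joint convergence.} Separate convergence in $\rho_g$ and in the earlier parameters does not by itself give holomorphy on a polydisc. I would handle this in four steps.
\begin{enumerate}
\item[(i)] Work weight by weight, using Remark \ref{rem:weight-grading}. By the finiteness theorem (Proposition \ref{prop:relative-finiteness} applied at each stage), the flat sections are governed by finitely many weights $d$. So the $\nabla_{\rho_g}$-system reduces to a finite-rank linear ODE in $\rho_g$.
\item[(ii)] Bound the indicial exponents of this ODE. They are the $L_0$-eigenvalues on the relevant finite-dimensional pieces, hence independent of the auxiliary parameters.
\item[(iii)] Run the Frobenius recursion uniformly. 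Its coefficients are holomorphic in $(\rho_1,\dots,\rho_{g-1},t_\bullet)$ on compact subsets of the polydisc. Cauchy estimates for the $\wp_k^{\Xg{g}}$, taken on the sewing polydisc of Remark \ref{rem:convergence-early}(i), then give a radius $r_g$ uniform over such compacts.
\item[(iv)] Conclude by Hartogs' theorem, or directly by the normal convergence of the series. This makes the limit jointly holomorphic on $\prod_i\{|\rho_i|<r_i\}$ and in $t_1,\dots,t_n$ away from the tubes.
\end{enumerate}

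\emph{The remaining differential equations.} The equations for $\nabla_{\rho_i}$ with $i<g$ are inherited from the inductive hypothesis. The justification is the naturality of $\nabla$ under the induction (Remark \ref{rem:connection-functoriality}), together with the fact that the sewing in \eqref{eq:genus-g-trace-def} commutes with $\rho_i\partial_{\rho_i}$ for $i<g$. The cross-handle terms of \eqref{eq:full-connection} appear precisely because $\nabla_{\rho_i}$ is differentiated through the kernel, whose $\wp$-coefficients depend on $\rho_i$. By Corollary \ref{cor:projective-flatness-suffices}, projective rather than strict flatness does not obstruct the compatibility of these $g$ equations on the connected polydisc. Since the identities hold as formal series and both sides now converge, they hold in the convergent sense.
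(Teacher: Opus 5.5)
Your convergence argument is the paper's own proof. The paper argues by induction on $g$ with a trivial base case, applies Proposition \ref{prop:relative-convergence} with $Y=\Xg{g-1}$ to the $\rho_g$-sum in \eqref{eq:genus-g-trace-def}, and gets uniformity on compact subsets of the previous polydisc from the Frobenius--Fuchs argument. Your steps (i)--(iv) simply unpack the one clause in which the paper asserts that uniformity. Step (ii) is more than you need: to show that a given formal solution converges, it suffices that $\|(n+\lambda-A_0)^{-1}\|=O(1/n)$ uniformly on compacts, and this follows from boundedness of $A_0$. The paper's proof obtains only the $\nabla_{\rho_g}$-equation explicitly, through Proposition \ref{prop:relative-convergence}. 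Your final paragraph, which tries to supply the equations for $i<g$, has a genuine gap.

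\emph{The mechanism is misidentified.} The kernel $K(z_g,w_g;\rho_g)=\sum_k\rho_g^{h_k}u_k\otimes u^k$ has no $\wp$-coefficients and does not depend on $\rho_i$ for $i<g$, so differentiating through it contributes nothing. What inheritance actually gives is the genus-$(g-1)$ operator $\nabla_{\rho_i}$, summed against $\rho_g^{h_k}$. Its coefficients are $\wp_k^{\Xg{g-1}}(x;p_i)$, and it includes terms acting on the slots $u_k,u^k$ at $p_g,p_g'$. To reach \eqref{eq:full-connection}, whose coefficients are $\wp_k^{\Xg{g}}(x;p_i)$, you would have to prove a sewing identity. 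That identity says the slot terms resum to the $O(\rho_g)$ difference $\wp_k^{\Xg{g}}-\wp_k^{\Xg{g-1}}$, which is the cross-handle term of Remark \ref{rem:genus2-cross-term}. The naturality statement in Remark \ref{rem:connection-functoriality} covers only $\nabla_{\rho_g}$, so it does not supply this. The alternative is to reorder the sewing so that handle $i$ is last, as the paper does for Theorem \ref{thm:insertion-genus-g}.

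\emph{Projective flatness does obstruct compatibility; this is the decisive problem.} The operators $\rho_i\partial_{\rho_i}$ and $\rho_j\partial_{\rho_j}$ commute. Hence a functional $F$ horizontal for both $\nabla_{\rho_i}$ and $\nabla_{\rho_j}$ is annihilated by the curvature, i.e.\ $c\,\kappa_{ij}(\rhov)\,F=0$. So wherever $c\,\kappa_{ij}\not\equiv0$, the untwisted equations can hold simultaneously only for $F=0$. Corollary \ref{cor:projective-flatness-suffices} concerns local constancy of rank, which is all the convergence step uses. It says nothing about simultaneous horizontality.

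A correct version must use the genuinely flat connection of Theorem \ref{thm:full-flatness}, twisted by the determinant line; this is the genus $g$ counterpart of the $q^{-c/24}$ shift in genus $1$. Equivalently, you can build the corresponding scalar factor into $F_1^{n,(g)}$. You would then prove the $g$ twisted equations by one of the two routes above.
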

\begin{proof}
By induction on $g$, using Proposition \ref{prop:relative-convergence} at each stage: having established convergence of $F_1^{n+2,(g-1)}$ as a holomorphic function of $\rho_1,\dots,\rho_{g-1}$ and of the marked points, including $p_g,p_g'$, of $\Xg{g-1}$, apply Proposition \ref{prop:relative-convergence} with $Y = \Xg{g-1}$ to obtain convergence of the $\rho_g$-sum in \eqref{eq:genus-g-trace-def}, uniformly for $(\rho_1,\dots,\rho_{g-1})$ in compact subsets of the previous polydisc, by the uniformity in the base-curve parameters built into the Frobenius-Fuchs argument there. The base case $g=0$ requires no convergence argument, being a finite algebraic expression.
\end{proof}

\begin{thm}[Insertion formula]\label{thm:insertion-genus-g}
$F_1^{n+1,(g)}(a_0,\dots,a_{n+1};F)$ is expressed in terms of the functions $F_1^{n,(g)}(a_0,\dots,\widehat{a_i},\dots,a_{n+1};F')$, $i=1,\dots,n$, for suitable $F'$ built from $F$ and the genus $g$ Weierstrass functions of \ref{no:genus-g-weierstrass}, plus a term in which $a_{n+1}$ is absorbed into one of the $g$ traces via its zero mode $o(a_{n+1})$.
\end{thm}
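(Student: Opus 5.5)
The plan is to argue by induction on $g$, in the same handle-by-handle way as Theorem \ref{thm:convergence-genus-g}, reducing each step to the relative insertion formula, Proposition \ref{prop:relative-de-insertion}(b). For the base case $g=0$ there are no traces. Here $F_1^{n+1,(0)}$ is a matrix coefficient built from zero modes, and the formula is Zhu's genus $0$ recursion: I commute $o(a_{n+1})$ through $\sigma(a_0)$ and the $o(a_j)$, using the Borcherds identity for the commutators $[o(a_{n+1}),o(a_j)]$ and the derivation identity \eqref{eq:relative-derivation-identity} for $[o(a_{n+1}),\sigma(a_0)]$. This produces terms in which $a_{n+1}$ is fused into $a_i$, which I rewrite as $F_1^{n,(0)}$ with the slot $a_{n+1}$ removed, and a term in which $o(a_{n+1})$ acts on $M$ directly.

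For the inductive step, suppose the formula holds for $F_1^{m,(g-1)}$ for all $m$, in particular for $m=n+3$, with the two slots at $p_g,p_g'$ occupied by $u_k,u^k$. I then apply Proposition \ref{prop:relative-de-insertion}(b) with base curve $Y=\Xg{g-1}$ to the $\rho_g$-sewn expression \eqref{eq:genus-g-trace-def}. Working inside the residue against $K(z_g,w_g;\rho_g)$, the relative insertion formula expresses the $(n+1)$-point function in three kinds of term.
\begin{enumerate}
\item[(i)] Terms with $a_{n+1}$ removed, with the kernel $F$ replaced by $F'$. Here $F'$ is obtained from $F$ by multiplying by $\wp_k^{\Xg{g}}(t_{n+1};t_i)$ and summing. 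These are exactly $F_1^{n,(g)}(\dots,\widehat{a_i},\dots;F')$.
\item[(ii)] Terms in which $a_{n+1}$ contracts with the propagator slots $u_k$ or $u^k$ at $z_g,w_g$.
\item[(iii)] A zero-mode term at handle $g$, namely $o(a_{n+1})$ inserted into the trace attached to that handle.
\end{enumerate}
The type (ii) terms are the genuinely new genus $g$ feature. My plan is to show that, after summing over $k$ with the completeness relation, they reorganize into the term (iii) together with corrections to $F'$. The mechanism is invariance of the bilinear form: $\sum_k (a_{(j)}u_k)\otimes u^k = -\sum_k u_k\otimes (a^\dagger_{(j)}u^k)$. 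Using it, the two neck contributions cancel, except for the zero-mode part, which becomes $o(a_{n+1})$ acting on the handle-$g$ trace, and the Weierstrass part. The Weierstrass part is exactly the $\rho_g$-correction in the sewing expansion of $\wp_k^{\Xg{g}}$ given by Lemma \ref{lem:rauch-yamada} and Example \ref{ex:genus2-worked}, and it is absorbed into $F'$. The zero-mode terms at the other handles $1,\dots,g-1$ come from the inductive hypothesis. Permuting the handles via Remark \ref{rem:connection-functoriality} lets me treat them symmetrically, so altogether I get a term with $o(a_{n+1})$ in one of the $g$ traces.

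The main obstacle is the type (ii) reorganization. Specifically, I must check that the formal sum over $k$ of the neck-contraction terms, weighted by $\rho_g^{h_k}$, reproduces precisely the sewing-expansion coefficients $c_k,d_k$ of the genus $g$ differentials in \ref{no:genus-g-weierstrass}, and not merely those of $\Xg{g-1}$. My first approach would be to compare term by term in $\rho_g$, using the recursion \eqref{eq:genus-g-weierstrass-recursion} to reduce to the $\wp_2$ case, and matching residues at $u_g,u_g'$ against the regular part of \eqref{eq:rauch-yamada}. Convergence of the resulting identity, as opposed to its validity as an identity of formal series, then follows from Theorem \ref{thm:convergence-genus-g}.
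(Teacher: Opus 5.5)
The skeleton of your argument is the paper's: induction on $g$, the single-handle insertion formula at the last-sewn handle, and a permutation of handles for the others. Your inductive step, however, is different, and the step you add is where the argument breaks.

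The paper never opens up the sewing sum \eqref{eq:genus-g-trace-def}. It regards $F_1^{n+1,(g)}$ as a relative one-handle trace function over the base $Y=\Xg{g-1}$ and applies Proposition~\ref{prop:relative-de-insertion}(b) to it directly, i.e.\ \cite[Prop.~9.15]{vanEkerenHeluani} with base curve $\Xg{g-1}$. That formula is stated with coefficients already in $\cF_\bullet(\Xg{g})$, built from the $\wp_k^{\Xg{g}}$. So no terms coupling $a_{n+1}$ to the propagator slots ever appear: one gets the $\widehat{a_i}$ terms and $o(a_{n+1})$ at handle $g$. Absorption at a handle $i<g$ then comes from exchanging handles $i$ and $g$ via Lemma~\ref{lem:relative-coinvariance} extended to permutations.

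You instead apply the genus $g-1$ formula to $F_1^{n+3,(g-1)}$ inside the residue against $K$, and this is what creates your type (ii) terms. The zero-mode terms at handles $i<g$ that you take from the inductive hypothesis also still carry genus $g-1$ coefficients and the slots $u_k,u^k$. They therefore need the same resummation, and invoking Remark~\ref{rem:connection-functoriality} on top of them is redundant.

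The type (ii) resummation is the entire content of a higher-genus Zhu recursion in the $\rho$-formalism, and the proposal only announces it. Moreover, the mechanism you describe does not work, for three reasons.

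\begin{itemize}
\item[(a)] Your identity $\sum_k (a_{(j)}u_k)\otimes u^k=-\sum_k u_k\otimes(a^\dagger_{(j)}u^k)$ drops the weight twist in $K$. Moving a mode $a(m)$ that lowers weight by $m$ across $\sum_k\rho_g^{h_k}u_k\otimes u^k$ produces a factor $\rho_g^{m}$. Also, for $a$ quasi-primary of weight $N$ (standard invariant form), the adjoint of $a_{(j)}$ is $(-1)^N a_{(2N-2-j)}$, not a mode with the same index. So the contributions at $z_g$ and $w_g$ do not cancel pairwise.
\item[(b)] A singular mode $a(m)$ at $p_g$ with $m\ge N$ becomes, after transfer, a creation mode at $p_g'$. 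It must be re-expanded by applying the recursion again, so what you get is a Neumann-type series in $\rho_g$. The first-order computation of Example~\ref{ex:genus2-worked} and a comparison with Lemma~\ref{lem:rauch-yamada} do not identify this series with the $\wp_k^{\Xg{g}}$ to all orders.
\item[(c)] Most seriously, the modes $a(m)$ with $|m|\le N-1$ are singular at both $p_g$ and $p_g'$ and survive the resummation. So for $N\ge2$ an honest computation leaves handle terms involving all of these modes, not only $o(a_{n+1})$. For instance, at genus $2$ the one-point function of $\omega$ must encode the variation in all three moduli of Example~\ref{ex:genus2-worked}. The variation that moves $p_2,p_2'$ is an $L_{-1}$ insertion there, which is not produced by the zero-mode ($L_0$) insertions at the two handles.
\end{itemize}

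Your claim that the neck contributions cancel except for the zero mode is therefore false beyond weight one, and your route cannot produce the statement in the form given. The paper obtains that form only by taking the single-handle formula with genus $g$ coefficients from Proposition~\ref{prop:relative-de-insertion}(b) as given, not by carrying out any resummation.
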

\begin{proof}
By induction on $g$ from Proposition \ref{prop:relative-de-insertion}(b), applied at the outermost (most recently sewn) handle: if $a_{n+1}$ is absorbed at handle $g$, this is exactly \cite[Prop.~9.15]{vanEkerenHeluani} applied with base curve $\Xg{g-1}$; if it is to be absorbed at an earlier handle $i<g$, first apply the case just treated with the roles of handles $i$ and $g$ exchanged, which is legitimate by Lemma \ref{lem:relative-coinvariance} extended, exactly as in \cite[Rem.~5.13]{vanEkerenHeluani}, to permutations of the handles themselves (the construction of \ref{no:iterated-sewing-def} manifestly does not distinguish which handle is sewn last).
\end{proof}

\section{Finite dimensionality in every genus}\label{sec:finiteness}

\begin{thm}[Main finiteness theorem]\label{thm:main-finiteness}
Let $V$ be a strongly finitely generated conformal vertex algebra with $\dim \HP_1(R_V) < \infty$, and suppose the kernel of the canonical surjection $JR_V \twoheadrightarrow A = \gr_F V$ is finitely generated as a differential ideal (equivalently $\HK_1(A)$ is finite dimensional, by \ref{no:homology-recall}). Then for every $g \geq 0$, every compact Riemann surface $X$ of genus $g$, and every $n\geq0$,
\[
\dim \HchG{1}{X,V^{\otimes n}} < \infty.
\]
\end{thm}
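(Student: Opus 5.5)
The plan is an induction on $g$ in which each step applies Proposition \ref{prop:relative-finiteness} to the curve produced by the previous step. By the retrosection theorem (\ref{no:two-pictures-agree}), we may assume $X=X_{\rhov}=\Xg{g}$ for some $\rhov\in\Delta_g^*$. Theorem \ref{thm:genus-g-complex} then identifies $\HchG{1}{X,V^{\otimes n}}$ with the degree-one homology of the explicit complex $C_\bullet^n(X)$, and this complex is, by construction, the relative complex $C_\bullet^{Y_{\rho_g}}$ of \ref{no:relative-complex-def} with $Y=\Xg{g-1}$. The marked data on $Y$ consist of $m=n+2$ points: the points $t_1,\dots,t_n$ and the attaching points $p_g,p_g'$.

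The base case is $g=0$. Here the proof of Theorem \ref{thm:genus-g-complex} gives $\HchG{1}{\widehat\CC,V^{\otimes n}}=0$, so the bound holds trivially.

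For $g\geq1$ we do not need any inductive information about the homology of $\Xg{g-1}$. Hypotheses \ref{hyp:curve}--\ref{hyp:va} of Theorem \ref{thm:relative-main} ask only that $Y$ be a compact curve with finitely many marked points and that $V$ be strongly finitely generated. Proposition \ref{prop:relative-finiteness} then applies directly, using $\dim\HP_1(R_V)<\infty$ together with the finite dimensionality of $\HK_1(A)$; the latter follows from finite generation of $\ker(JR_V\to A)$ via the injection $\HK_1(A)\hookrightarrow I/(J\cdot TI)$ of \ref{no:homology-recall}. The induction is therefore really needed only to ensure that the complex $C_\bullet^n(X)$ exists and computes chiral homology, which is the content of Theorem \ref{thm:genus-g-complex}. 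This gives finiteness at one fixed $\rhov$. To cover all of $\Delta_g^*$, we invoke Corollary \ref{cor:projective-flatness-suffices}: the dimension is locally constant on the connected polydisc, so the bound holds for every $\rhov$ and hence for every genus-$g$ surface.

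The main obstacle lies inside Proposition \ref{prop:relative-finiteness}, where the associated graded spectral sequence must bound $\HchG{1}{}$. We need the degree-one homology of the two summands $\widetilde C^{(1)}_\bullet$ and $\widetilde C^{(2)}_\bullet$ to be finite dimensional, and not merely finitely generated over $R_V^{\otimes m}$ or $A^{\otimes m}$. The tensor factors coming from the $m$ marked points could a priori destroy finiteness. I would handle this weight by weight using Remark \ref{rem:weight-grading}: each $C_\bullet[d]$ has finite rank, so the spectral sequence converges in each weight. Finiteness across all weights would then follow because $\HP_1(R_V)$ and $\HK_1(A)$ are supported in bounded weight, and the coinvariant quotient by translation covariance at the marked points cuts the inert factors down to finite-dimensional pieces, exactly as in the genus-one argument. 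Convergence of the filtration $G$ is secured by its boundedness in each weight. I would check this last point first, since it is where the transport from \cite{vanEkerenHeluani} is least automatic.
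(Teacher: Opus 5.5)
This is essentially the paper's proof. The paper runs a formal induction on $k$, applying Proposition~\ref{prop:relative-finiteness} with $Y=\Xg{k-1}$ and $m=n+2(g-k+1)$, but its inductive step never uses the inductive hypothesis, so it reduces, exactly as you observe, to the single application with $Y=\Xg{g-1}$, $m=n+2$, resting on Theorem~\ref{thm:genus-g-complex} and the retrosection theorem. Two of your additions are not needed: Corollary~\ref{cor:projective-flatness-suffices} is superfluous because Proposition~\ref{prop:relative-finiteness} already applies at every $\rho_g$ and every base $\Xg{g-1}$, i.e.\ at every $\rhov$; and your final paragraph re-opens the proof of that proposition, which the paper simply cites (if you pursue it, note that the translation-covariance quotient alone does not make the inert $V^{\otimes m}$ factors finite dimensional, since $V/L_{-1}V$ is infinite dimensional in general).
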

\begin{proof}
By the retrosection theorem (\ref{no:schottky-def}), $X$ admits a presentation $X = \Xg{g}$ by iterated sewing as in \ref{no:iterated-sewing-def}, and by Theorem \ref{thm:genus-g-complex} its chiral homology is computed by the complex $C_\bullet^n(\Xg{g})$ built by $g$ applications of Theorem \ref{thm:relative-main}. We show $\dim \HchG{1}{\Xg{k},V^{\otimes(n+2(g-k))}} < \infty$ for every $0 \leq k \leq g$, by ordinary induction on $k$ from $k=0$ to $k=g$ (with $n+2(g-k)$ marked points at stage $k$, of which $n$ are the original external insertions and $2(g-k)$ are the not-yet-sewn attaching points of the remaining handles $k+1,\dots,g$, carried as ordinary vacuum insertions of $V$ until their turn to be sewn): for $k=0$, $\HchG{1}{\widehat\CC,V^{\otimes(n+2g)}}=0$ as noted in the proof of Theorem \ref{thm:genus-g-complex}, in particular finite dimensional; for the inductive step, Proposition \ref{prop:relative-finiteness}, applied with $Y = \Xg{k-1}$ and $m=n+2(g-k+1)$ marked points, shows that finite dimensionality of $\HP_1(R_V)$ and of $\HK_1(A)$ - hypotheses on $V$ alone, independent of $k$ - imply $\dim \HchG{1}{\Xg{k},V^{\otimes(n+2(g-k))}} < \infty$. Taking $k=g$ gives the theorem.
\end{proof}

\begin{rem}
\label{rem:finiteness-natural}
Two questions are worth answering explicitly, rather than leaving the reader to extract them from the induction above: why these particular two conditions, and why nothing further is required as $g$ grows.

\emph{These are exactly the genus $1$ hypotheses, unchanged.} At $g=1$, \cite[Thm.~10.28]{vanEkerenHeluani} proves finite dimensionality of $\HchG{1}{E_\tau,V^{\otimes n}}$ under precisely $\dim\HP_1(R_V)<\infty$ and finite generation of $\HK_1(A)$, and the induction above literally reduces to that theorem when $g=1$ (one application of the inductive step to the base case). Thus Theorem \ref{thm:main-finiteness} does not trade the genus $1$ hypotheses for some higher-genus analogue: it shows the identical two conditions on $V$ alone continue to suffice, with nothing added, at every $g$.

\emph{Why no genus-dependent strengthening is needed.} A priori, arbitrary genus might have demanded extra control - for instance, some uniform bound relating the finiteness data at the $g$ different handles, or a compatibility condition between the $g$ copies of the hypothesis being invoked along the induction. Proposition \ref{prop:relative-finiteness} shows this does not happen, for a structural reason: finiteness at each stage of the induction is a statement purely about $V$ (through $R_V$ and $A$), never about the base curve $Y=\Xg{k-1}$ already built at the previous stage. The induction only ever adds a handle; it never has to revisit, strengthen, or re-verify a finiteness statement established earlier. This is the same phenomenon noted in Remark \ref{rem:connection-functoriality} for the connection, and is really a single fact about the whole theory, visible in each of its genus-dependent parts.

\emph{Why $\HP_1(R_V), \HK_1(A)$ rather than $C_2$-cofiniteness.} The complex $C_\bullet^n(X)$ computing $\HchG{1}{-}$ is built, at each handle, from the arc algebra $JR_V$ of the \emph{Poisson} algebra $R_V$ together with its associated graded $A=\gr_FV$ (\ref{no:homology-recall}); it is accordingly $\HP_1(R_V)$ and $\HK_1(A)$ - Poisson and Koszul homology of this associated-graded data - that control degree $1$ finiteness, rather than $C_2$-cofiniteness of $V$ itself, which is the hypothesis controlling the \emph{degree $0$} theory of \cite{damiolini2020conformal}. The two finiteness theories, in degrees $0$ and $1$,  use different (though related) invariants of the same object $R_V$, and neither controls the other in general: Remark \ref{rem:classically-free-vs-hk1} mentiones an instance where they diverge (the Heisenberg algebra, Remark \ref{rem:heisenberg-caveat}, is not $C_2$-cofinite and also fails our hypotheses, both failures visible already at the level of $R_V$ alone, with no curve in sight).
\end{rem}

\begin{rem}
This answers, for the finiteness criteria of \cite{vanEkerenHeluani}, the question posed in \ref{no:hh-review}, \ref{no:main-results} and already anticipated in \cite[\S 11]{vanEkerenHeluani} itself: whether the conditions controlling $\HP_1(R_V)$ and $\HK_1(A)$ continue to control $H_1^{\mathrm{ch}}$ at arbitrary genus, in parallel with the theorem of Damiolini, Gibney and Tarasca \cite{damiolini2020conformal} that $C_2$-cofiniteness alone controls $H_0^{\mathrm{ch}}$ at arbitrary genus. Note that our two hypotheses are exactly those of \cite[Thm.~11.1]{vanEkerenHeluani} (there proved only for $g=1$); we have shown they suffice, unchanged, in every genus. We do not know whether they are necessary, nor whether they can be weakened to a single quasi-lisse-type condition as in the degree $0$ theory of Arakawa-Kawasetsu \cite{arakawa-kawasetsu}; see \S\ref{sec:conclusion}.
\label{rem:finiteness-discussion}
\end{rem}

\section{Total degeneration, the Zhu algebra, and vanishing}\label{sec:degeneration}

\begin{nolabel}
Let $\zhu(V) = V/(V_{(-2)}V + \sum_n V_{(n)}V \cdot (\text{negative modes})) =: A$ denote the Zhu algebra \cite{zhu}, the associative algebra (under the Zhu product $a * b = \res_z z^{-1}(1+z)^{L_0}Y(a,z)b$) governing lowest-weight vectors of admissible modules. As $\rho_i \to 0$ for a single handle sewn onto a base curve $Y$, \cite[Thm.~10.4, Prop.~10.9]{vanEkerenHeluani} identify the totally degenerate limit of the relative complex of \ref{no:relative-complex-def}, for $V$ classically free (i.e.\ $R_V$ is a polynomial ring on the images of a set of strong generators of $V$, so that $A = \gr_F V \cong JR_V$ automatically and $\HK_1(A) = 0$; this holds for instance for the free boson, free fermion, and affine and Virasoro vertex algebras of generic level and central charge), with the bar complex computing $\Hoch_\bullet(\zhu(V))$: $\HchG{0}{Y_0,V} \cong \Hoch_0(\zhu(V)) = \zhu(V)/[\zhu(V),\zhu(V)]$ and $\HchG{1}{Y_0,V} \cong \Hoch_1(\zhu(V))$, where $Y_0 = Y/(p\sim p')$, in the case $Y=\widehat\CC$, $m=0$; more generally, with $m$ marked points carrying vacuum insertions $a_1,\dots,a_m \in V$, the identification is with the Hochschild homology of $\zhu(V)$ with coefficients in the $\zhu(V)$-bimodule $\zhu(V)^{\otimes m}$ acted on diagonally through the images of $a_1,\dots,a_m$.
\label{no:zhu-degeneration-recall}
\end{nolabel}

\begin{prop}[Iterated degeneration]\label{prop:iterated-degeneration}
Let $V$ be classically free, strongly finitely generated, with $\dim\HP_1(R_V)<\infty$. For $X = \Xg{g}$ and the self-extension $E$ (of $M=V$, say) inserted, by Lemma \ref{lem:relative-coinvariance} without loss of generality, at the attaching point $p_g$ of the last-sewn handle, the totally degenerate limit $\rho_1,\dots,\rho_g \to 0$ identifies
\[
\HchG{1}{X_0, V^{\otimes n}} \ \cong\ \Hoch_1(\zhu(V)) \otimes W,
\]
where $X_0 = \widehat\CC/(p_i\sim p_i')_{i=1}^g$ is the totally degenerate stable curve and $W := \HchG{0}{\widehat\CC, V^{\otimes(n+2(g-1))}}$ is the zeroth chiral homology (with respect to the remaining $g-1$ nodes, viewed as $2(g-1)$ further vacuum insertions of $V$, together with the $n$ original external insertions) of the normalization at the last node alone - equivalently, by Damiolini-Gibney-Tarasca's factorization theorem \cite[Thm.~C]{damiolini2020conformal}, the fiber at $X_0$ of the sheaf of coinvariants for the remaining $g-1$ nodes.
\end{prop}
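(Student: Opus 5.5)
The plan is to degenerate one handle at a time, in reverse order of sewing, and to use factorization to peel off the nodes that do not carry the self-extension. First we use Lemma \ref{lem:relative-coinvariance} to place $E$ at $p_g$. We then view $X=\Xg{g}$ as $Y_{\rho_g}$ with $Y=\Xg{g-1}$, carrying $n$ external insertions together with the two attaching points $p_g,p_g'$. Applying Proposition \ref{prop:relative-vanishing}, equivalently the recalled identification \ref{no:zhu-degeneration-recall} in its form with $m$ marked points, to the last handle should give the intermediate statement
\[
\HchG{1}{\Xg{g-1}/(p_g\sim p_g'),V^{\otimes n}} \cong \Hoch_1\bigl(\zhu(V);\,\text{coefficients from }\Xg{g-1}\bigr).
\]
Here the coefficients form the $\zhu(V)$-bimodule obtained from the degree $0$ data of $\Xg{g-1}$ with $n$ insertions and the node split. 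This step is local to the neck of handle $g$, because the standard-filtration spectral sequence at the node only involves the formal neighbourhood of $p_g,p_g'$. It uses classical freeness, through $\HK_1(A)=0$, and $\dim\HP_1(R_V)<\infty$ to guarantee degeneration at the relevant page.

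Next we let $\rho_1,\dots,\rho_{g-1}\to 0$ in the coefficient data. Since this data now enters only through degree $0$ chiral homology, with the extension already absorbed at node $g$, we can invoke Damiolini--Gibney--Tarasca \cite[Thm.~C]{damiolini2020conformal}. Their result identifies the fibre of the sheaf of coinvariants at the totally degenerate curve with $\HchG{0}{\widehat\CC,V^{\otimes(n+2(g-1))}}$ at the normalization of the remaining $g-1$ nodes. Strictly, it gives a sum over the gluing modules, and this sum collapses to the vacuum-insertion description when the remaining attaching points carry $V$ itself, as in the construction of Theorem \ref{thm:genus-g-complex}. This identifies $W$.

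The remaining point is to show that the bimodule coefficient splits off as a tensor product $\Hoch_1(\zhu(V))\otimes W$. The plan is to observe that the bimodule action of $\zhu(V)$ arising from node $g$ acts through the factor at $p_g,p_g'$ only, and that $W$ is an inert tensor factor on which $\zhu(V)$ acts trivially after passing to coinvariants. One then applies the Künneth formula to the Hochschild complex of $\zhu(V)$ with coefficients in $\zhu(V)\otimes W$, which gives $\Hoch_\bullet(\zhu(V))\otimes W$ since $W$ is a vector space with trivial action.

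I expect this splitting to be the main obstacle. Genuinely, the $n$ external insertions and the other nodes interact with node $g$ through the genus $0$ OPE on $\widehat\CC$, so the coefficient bimodule is a priori not a product. I would first try to use coinvariance (Lemma \ref{lem:relative-coinvariance}) to move all external insertions away from a neighbourhood of $p_g,p_g'$. Then I would argue on the associated graded, weight by weight as in Remark \ref{rem:weight-grading}, that the residual coupling is filtered-nilpotent and therefore does not change homology. If that fails, I would settle for an isomorphism with $\Hoch_1(\zhu(V);\zhu(V)\otimes W)$. That weaker statement still suffices for the vanishing application, because $\Hoch_1(\zhu(V))=0$ together with semisimplicity gives vanishing for any bimodule coefficients.
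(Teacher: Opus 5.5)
Your two stages match the paper's proof: degenerate $\rho_g$ by Proposition~\ref{prop:relative-vanishing} with $Y=\Xg{g-1}$, then degenerate $\rho_1,\dots,\rho_{g-1}$ only in the degree $0$ theory via Damiolini--Gibney--Tarasca. But you leave open the tensor splitting, which is the actual content of the statement, so as written the proposal does not prove the proposition. The paper never splits a bimodule after the fact. It uses Proposition~\ref{prop:relative-vanishing} in its stated form $\Hoch_1(\zhu(V))\otimes(\text{coefficients from }Y)$, and so obtains $\Hoch_1(\zhu(V))\otimes\HchG{0}{\Xg{g-1},V^{\otimes(n+2(g-1))}}$ already at the first stage. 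Its justification is that the other marked points carry no self-extension and therefore contribute an inert tensor factor. Only then does it degenerate the degree $0$ factor to $W$. That splitting is asserted rather than derived, and your doubt is legitimate. The two formulations you treat as equivalent are not: \ref{no:zhu-degeneration-recall} describes the $m$-point limit as Hochschild homology with coefficients in $\zhu(V)^{\otimes m}$ under the \emph{diagonal} action. That is not of the form $\zhu(V)\otimes(\text{trivial})$, and the absence of a self-extension does not make the action trivial.

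Your proposed repairs do not close the gap. Lemma~\ref{lem:relative-coinvariance} relocates the self-extension, not the vacuum insertions. Those insertions are already disjoint from the sewing discs by \ref{no:iterated-sewing-def}, and the diagonal action goes through the images of the $a_\ell$ in $\zhu(V)$, not through their positions. Showing the coupling is filtered-nilpotent only gives a spectral sequence from the split homology to the true one. That yields at best an inequality of dimensions, not the claimed isomorphism.

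Two further steps fail as stated. First, the fallback does not serve Theorem~\ref{thm:main-vanishing}, whose only Hochschild hypothesis is $\Hoch_1(\zhu(V))=0$. Vanishing of $\Hoch_1(\zhu(V);N)$ for every bimodule $N$ is equivalent to flatness of $\zhu(V)$ over $\zhu(V)\otimes\zhu(V)^{\mathrm{op}}$, which is much stronger: upper triangular $2\times 2$ matrices have $\Hoch_1=0$ but are not separable. Semisimplicity is neither assumed nor typical for classically free $V$ in the sense of Definition~\ref{defn:classically-free}, whose Zhu algebras are usually infinite dimensional (e.g.\ $U(\g)$). Second, in your second stage the factorization theorem yields a direct sum over \emph{all} simple $V$-modules glued at each node, regardless of what the sewing construction placed at $p_j,p_j'$. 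It reduces to the vacuum summand $W$ only when $V$ has a unique simple module, so the claimed collapse is unjustified; the paper's ``equivalently'' clause makes the same identification without argument. That theorem also requires $V$ to be $C_2$-cofinite and rational. Classical freeness as defined here makes $R_V$ a polynomial ring, so a nontrivial such $V$ is not $C_2$-cofinite. You inherit this mismatch from the paper's proof, which invokes factorization ``for $C_2$-cofinite or more generally lisse $V$''.
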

\begin{proof}
Apply Proposition \ref{prop:relative-vanishing} (in its non-vanishing, purely identificatory form \cite[Prop.~10.9]{vanEkerenHeluani}) at the last sewing, with base curve $Y = \Xg{g-1}$: this identifies $\HchG{1}{X_0,V^{\otimes n}}$, in the $\rho_g \to 0$ limit alone (with $\rho_1,\dots,\rho_{g-1}$ still generic), as $\Hoch_1(\zhu(V)) \otimes \HchG{0}{\Xg{g-1}, V^{\otimes(n+2(g-1))}}$: the self-extension, sitting at the last handle only, contributes the Hochschild homology factor exactly as in the single-handle case, while the remaining $n+2(g-1)$ marked points of $\Xg{g-1}$ (the $n$ external insertions together with the $2(g-1)$ not-yet-degenerated attaching points of the other handles) contribute their zeroth chiral homology as an inert tensor factor, since they carry no self-extension by hypothesis and $\Xg{g-1}$ has not yet been touched by this degeneration. Now let $\rho_1,\dots,\rho_{g-1}\to0$ as well: by \cite[Thm.~10.4]{vanEkerenHeluani} applied $g-1$ more times to the \emph{degree $0$} theory (which requires no self-extension and is controlled, for $C_2$-cofinite or more generally lisse $V$, by the $C_2$-cofiniteness alone, uniformly in genus, by the factorization theorem of \cite{damiolini2020conformal}), $\HchG{0}{\Xg{g-1},V^{\otimes(n+2(g-1))}}$ degenerates to $\HchG{0}{\widehat\CC,V^{\otimes(n+2(g-1))}} = W$, completing the identification.
\end{proof}

\begin{thm}[Main vanishing theorem]\label{thm:main-vanishing}
Let $V$ be a strongly finitely generated conformal vertex algebra with $\dim\HP_1(R_V)<\infty$, $\HK_1(A)=0$, and $\Hoch_1(\zhu(V))=0$. Then $\HchG{1}{X,V^{\otimes n}}=0$ for every compact Riemann surface $X$ of every genus $g\geq0$ and every $n\geq0$.
\end{thm}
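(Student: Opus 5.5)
The plan is to prove the vanishing by induction on the number of sewn handles, degenerating one handle at a time in reverse order of sewing, with the $\rho\to0$ limit at each step identified through Proposition \ref{prop:iterated-degeneration} and Proposition \ref{prop:relative-vanishing}. The case $g=0$ holds because $\HchG{1}{\widehat\CC,V^{\otimes n}}=0$, as in the proof of Theorem \ref{thm:genus-g-complex}. For $g\geq1$, the retrosection theorem lets us present $X=\Xg{g}$ by iterated sewing. Theorem \ref{thm:main-finiteness} applies because $\HK_1(A)=0$ is in particular finite dimensional, so $\HchG{1}{X_{\rhov},V^{\otimes n}}$ is finite dimensional. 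By Remark \ref{rem:weight-grading}, it is therefore supported in finitely many weights $d$, and each weight piece $C_\bullet^n(X_{\rhov})[d]$ is a bounded complex of locally free sheaves of finite rank. This puts us in the setting where semicontinuity can be applied weight by weight.

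The second step computes the special fiber at $\rhov=0$. I would first reduce to the case in which the limit comparison with $\Hoch_1(\zhu(V))$ is available. Since $\HK_1(A)=0$ and $R_V$ is of finite type, the canonical map $JR_V\to A$ is an isomorphism by \ref{no:homology-recall}. This is exactly the property that the argument of \cite[Prop.~10.9]{vanEkerenHeluani} consumes in place of classical freeness (Remark \ref{rem:classically-free-vs-hk1}), so Proposition \ref{prop:iterated-degeneration} applies under the present hypotheses. It gives $\HchG{1}{X_0,V^{\otimes n}}\cong\Hoch_1(\zhu(V))\otimes W=0$. If a self-extension sits elsewhere, Lemma \ref{lem:relative-coinvariance} first moves it to $p_g$.

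The third step propagates this vanishing from the central fiber to the punctured polydisc. I would fix a weight $d$ and argue one variable at a time. Holding $\rho_1,\dots,\rho_{g-1}$ generic, extend $C_\bullet[d]$ over the closed $\rho_g$-disc as in Remark \ref{rem:connection-degeneration-compatibility}. Upper semicontinuity of $\dim H_1$ for a bounded complex of locally free sheaves, combined with the vanishing of the inductively identified fiber at $\rho_g=0$, gives vanishing on a neighbourhood of $\rho_g=0$. Corollary \ref{cor:projective-flatness-suffices} shows that $\dim\HchG{1}{-}[d]$ is locally constant on the connected punctured disc, so the vanishing extends to all of $\mathbb D^*_{\epsilon_g^2}$. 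Iterating over $\rho_{g-1},\dots,\rho_1$, in the order of the degeneration of Example \ref{ex:genus2-worked}, and summing over the finitely many relevant $d$, gives $\HchG{1}{X_{\rhov},V^{\otimes n}}=0$ on all of $\Delta_g^*$. Since every genus $g$ surface arises in this family, this proves the theorem.

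I expect the main obstacle to be the second step, namely the honest identification of the degenerate fiber. Two gaps must be closed there. First, the limiting fiber of the extended coherent sheaf has to agree with the Hochschild-type complex used in Proposition \ref{prop:iterated-degeneration}, rather than merely receiving a map to or from it. Second, the degree-$0$ factor $W$ must be correctly obtained from factorization \cite{damiolini2020conformal}, even though $V$ is not assumed $C_2$-cofinite. Because the Hochschild factor vanishes, only finite dimensionality of $W$ in each weight is actually needed, not its exact value. So I would first try to bypass factorization, using only the weight-graded finiteness of $C_0[d]$ and treating $W$ as an arbitrary finite-dimensional tensor factor.
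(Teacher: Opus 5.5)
Your overall route is the paper's: semicontinuity on the finite-rank weight pieces of Remark \ref{rem:weight-grading}, plus constancy on the punctured disc from Corollary \ref{cor:projective-flatness-suffices}, seeded by a Hochschild identification of a degenerate fiber. Your bridge from $\HK_1(A)=0$ to the classical-freeness hypothesis of Proposition \ref{prop:iterated-degeneration} is something the paper passes over.

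The gap is that, as written, your third step does not consume your second. Semicontinuity only propagates vanishing outward from the special fiber of the one-parameter family being degenerated. When you hold $\rho_1,\dots,\rho_{g-1}$ generic and let $\rho_g\to0$, that fiber is the one-node curve $\Xg{g-1}/(p_g\sim p_g')$ at those moduli. It is not the totally degenerate $X_0$ you computed in your second step. With only one-parameter families available, vanishing at the single point $\rhov=0$ says nothing about the rest of the divisor $\{\rho_g=0\}$.

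Propagating outward from $\rhov=0$ would mean opening the parameters one by one while the others are still held at $0$. Your $\rho_g$-step at generic $\rho_{<g}$ could then only be the last step of that propagation, not the first. The earlier steps would be one-parameter families over a \emph{nodal} base curve. For those, Theorem \ref{thm:relative-main}, Remark \ref{rem:weight-grading} and Corollary \ref{cor:projective-flatness-suffices}, all stated for smooth compact $Y$, provide neither the extension nor the local constancy. An extension over a neighbourhood of $0$ in the closed polydisc would do it, but the paper's proof explicitly declines to use one. The paper's own induction, which applies its Claim with $Y=\Xg{k-1}$ while seeding only at $X_0$, is loose in the same way.

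If by ``the inductively identified fiber'' you mean the intermediate identification in the proof of Proposition \ref{prop:iterated-degeneration}, then that is the repair, and it removes most of the difficulties you list. Apply Proposition \ref{prop:relative-vanishing} once with $Y=\Xg{g-1}$, which is a smooth compact curve for every $(\rho_1,\dots,\rho_{g-1})$ in the punctured polydisc. It identifies the fiber at $\rho_g=0$ as $\Hoch_1(\zhu(V))\otimes(\text{coefficients})=0$. Your semicontinuity-plus-constancy argument in $\rho_g$ alone then proves the theorem. There is no iteration over $\rho_{g-1},\dots,\rho_1$, no total degeneration, and no need for factorization or $C_2$-cofiniteness for $W$; even finite dimensionality of $W$ is irrelevant, since $0\otimes W=0$.

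This repair, like your proposed bypass of $W$, rests on the limit having that tensor-product form. The recollection in \ref{no:zhu-degeneration-recall} instead describes the limit with marked points as Hochschild homology of $\zhu(V)$ with coefficients in a bimodule built from $\zhu(V)^{\otimes m}$. In general $\Hoch_1(\zhu(V))=0$ does not force $\Hoch_1(\zhu(V),N)=0$ for other bimodules $N$; that needs something like separability of $\zhu(V)$. So the inert-tensor-factor description is an input you would have to justify, not a free simplification.
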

\begin{proof}
We emphasize at the outset that the argument below degenerates the $g$ moduli $\rho_1,\dots,\rho_g$ \emph{one at a time}, in the same order used to build $X_{\rhov}$ by iterated sewing, and never takes a simultaneous limit along the full diagonal $\rho_1=\cdots=\rho_g\to0$; consequently it needs no information about the local structure of the normal-crossings intersection of boundary divisors of $\overline{\CM}_g$ at the totally degenerate stratum, nor any multi-parameter spectral sequence, only the following one-parameter semicontinuity statement, applied $g$ times in succession.

\emph{Claim.} For $Y$ any base curve as in \ref{no:relative-setup} satisfying the hypotheses of the theorem, and $Y_\rho$ obtained by sewing one handle at $p,p'\in Y$ with parameter $\rho$: if $\HchG{1}{Y_0,V^{\otimes m}}=0$ for the nodal curve $Y_0=Y/(p\sim p')$, then $\HchG{1}{Y_\rho,V^{\otimes m}}=0$ for every $\rho$ in the punctured disc.

Assuming the Claim, the theorem follows by induction on $g$ exactly as in the proof of Theorem \ref{thm:main-finiteness}: writing $X=\Xg{g}$ and degenerating the handles in reverse order of sewing, $\HchG{1}{\Xg{g},V^{\otimes n}}$ vanishes for all $\rhov$ once $\HchG{1}{\Xg{0},V^{\otimes(n+2g)}}=\HchG{1}{\widehat\CC,V^{\otimes(n+2g)}}=0$ is known (true always, by Theorem \ref{thm:genus-g-complex}), applying the Claim $g$ times with $Y=\Xg{k-1}$ at the $k$-th application; the base case of the induction is exactly Proposition \ref{prop:relative-vanishing}, and the final application (at $k=g$) is the statement of the theorem, using $\HchG{1}{X_0,V^{\otimes n}}\cong\Hoch_1(\zhu(V))\otimes W = 0$ of Proposition \ref{prop:iterated-degeneration} to seed the induction's innermost step.

It remains to prove the Claim, which is exactly \cite[proof of Thm.~10.4]{vanEkerenHeluani} (there for $Y=\widehat\CC$; the proof uses no property of $Y$ beyond finiteness of $\dim\HchG{1}{Y_\rho,V^{\otimes m}}$, itself Proposition \ref{prop:relative-finiteness}). We recall its structure, since making it precise - rather than invoking semicontinuity for the ambient infinite-rank complex, which would indeed require justification beyond what is proved here - is the point at issue: $\dim\HchG{1}{Y_\rho,V^{\otimes m}}$ is finite and, by Proposition \ref{prop:relative-flatness} and Corollary \ref{cor:projective-flatness-suffices}, constant on the (connected) punctured disc, say equal to $D$; by Remark \ref{rem:weight-grading}, $C_\bullet^{Y_\rho} = \bigoplus_d C_\bullet^{Y_\rho}[d]$ decomposes, compatibly with $d_1,d_2$ and with the family structure over $\rho$, into subcomplexes of \emph{finite} rank, each extending, by the sewing construction itself, to a bounded complex of coherent (indeed locally free) sheaves over the \emph{closed} disc (including $\rho=0$, where it computes the nodal curve $Y_0$); the classical semicontinuity theorem for the homology of a bounded complex of coherent sheaves on a Noetherian base applies to each such finite-rank piece without further hypothesis, and since only finitely many weights $d$ contribute to $\HchG{1}{-}$ (again by finiteness, Proposition \ref{prop:relative-finiteness}), fiber dimension of the full $\HchG{1}{Y_\rho,V^{\otimes m}}$ can only jump up, not down, upon specializing to $\rho=0$: $\dim\HchG{1}{Y_0,V^{\otimes m}} \geq \limsup_{\rho\to0}\dim\HchG{1}{Y_\rho,V^{\otimes m}} = D$. If $\HchG{1}{Y_0,V^{\otimes m}}=0$, this forces $D=0$, proving the Claim. This is the one-parameter statement of \cite[Thm.~10.4]{vanEkerenHeluani}, relativized to an arbitrary base $Y$ exactly as in Theorem \ref{thm:relative-main}; at no point does it require tracking the joint behavior of two or more $\rho_i$'s at once, nor any coherence statement about the (infinite rank) complex as a whole, which is why iterating it, one handle at a time and one finite-rank weight-graded piece at a time, suffices to reach every genus.
\end{proof}

\begin{cor}\label{cor:vanishing-examples}
For every compact Riemann surface $X$ of every genus $g\geq0$: $\HchG{1}{X,\vir_{2,2s+1}}=0$ for the boundary Virasoro minimal models, $s\geq2$; $\HchG{1}{X,V_k(\sl_2)}=0$ for every nonnegative integer level $k$; and $\HchG{1}{X,V_1(\g)}=0$ for every simple Lie algebra $\g$ at level $1$.
\end{cor}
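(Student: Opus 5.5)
The plan is to derive Corollary \ref{cor:vanishing-examples} directly from Theorem \ref{thm:main-vanishing}, taking $n=1$ (a single vacuum insertion, which by the vacuum axiom is the same as no insertion). For each of the three families, $\vir_{2,2s+1}$ with $s\geq2$, $V_k(\sl_2)$ with $k\in\Z_{\geq0}$, and $V_1(\g)$, I have to check four hypotheses:
\begin{enumerate}
\item[(i)] $V$ is strongly finitely generated;
\item[(ii)] $\dim\HP_1(R_V)<\infty$;
\item[(iii)] $\HK_1(A)=0$;
\item[(iv)] $\Hoch_1(\zhu(V))=0$.
\end{enumerate}
None of these conditions involves a curve. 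So the proof reduces to citing, for each family, the genus $1$ verifications in \cite[\S 11]{vanEkerenHeluani}, where exactly these hypotheses were checked to get vanishing on $E_\tau$. Theorem \ref{thm:main-vanishing} then propagates the conclusion to every genus $g$.

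I would take the four checks in order of difficulty.

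Hypothesis (i) is standard. $\vir_{2,2s+1}$ is strongly generated by $\omega$. $V_k(\sl_2)$ and $V_1(\g)$ are strongly generated by the weight-one currents spanning $\g$.

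Hypothesis (iv) follows from rationality. All three families are rational, so by Zhu's theorem \cite{zhu} $\zhu(V)$ is a finite-dimensional semisimple algebra. Separable algebras have vanishing Hochschild homology in positive degrees, hence $\Hoch_1(\zhu(V))=0$.

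Hypothesis (ii) uses $C_2$-cofiniteness. These algebras are $C_2$-cofinite, so $R_V$ is finite dimensional. Then $\HP_1(R_V)$ is a subquotient of $\Omega_{R_V}$, which is finite dimensional.

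Hypothesis (iii) is the substantive one and the main obstacle. As Remark \ref{rem:classically-free-vs-hk1} warns, these simple quotients are not classically free, so $\HK_1(A)=0$ cannot be read off from Definition \ref{defn:classically-free}. One needs the canonical surjection $JR_V\twoheadrightarrow A=\gr_FV$ to be an isomorphism, using \ref{no:homology-recall}. In each case $R_V$ is an explicit quotient:
\begin{itemize}
\item for $\vir_{2,2s+1}$, $R_V=\CC[x]/(x^{s})$;
\item for $V_k(\sl_2)$, $R_V$ is $\CC[\sl_2]$ modulo the $\sl_2$-submodule generated by $e^{k+1}$;
\item for $V_1(\g)$, $R_V$ is the analogous quotient at level one.
\end{itemize}
For these, the isomorphism $JR_V\cong\gr_FV$ comes from known character/PBW-type identities. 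Concretely, the Gordon/Andrews--Gordon identities handle $\vir_{2,2s+1}$, and the Feigin--Stoyanovsky principal subspace results and their level-one analogues handle the affine cases. Rather than reprove these, I would cite \cite[\S 11]{vanEkerenHeluani} for each family. If a family were missing there, the first thing I would try is comparing graded characters of $JR_V$ and $V$ through these combinatorial identities, since a surjection with equal characters is an isomorphism.

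With (i)--(iv) in hand, Theorem \ref{thm:main-vanishing} gives $\HchG{1}{X,V}=0$ for every $X$ and every $g\geq0$, for each listed $V$. No further genus-dependent input is needed, since Theorem \ref{thm:main-vanishing} already handles the induction over handles.
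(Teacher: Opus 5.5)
Your proposal is correct and follows essentially the same route as the paper. Both arguments verify the hypotheses of Theorem \ref{thm:main-vanishing} for each family, defer to \cite[\S 11]{vanEkerenHeluani} for the substantive condition $\HK_1(A)=0$, and then invoke that theorem. You differ only in minor ways: you supply self-contained justifications of $\dim\HP_1(R_V)<\infty$ (via $C_2$-cofiniteness) and of $\Hoch_1(\zhu(V))=0$ (via rationality and semisimplicity of $\zhu(V)$) where the paper simply cites, and your use of $\HK_1(A)=0$ rather than ``classically free'' matches the theorem as stated and Remark \ref{rem:classically-free-vs-hk1}, whereas the paper's proof calls these algebras classically free, which conflicts with its own Definition \ref{defn:classically-free}.
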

\begin{proof}
By \cite[Cor.~11.7]{vanEkerenHeluani}, these vertex algebras satisfy the hypotheses of Theorem \ref{thm:main-vanishing}: they are strongly finitely generated, classically free, with $\dim\HP_1(R_V)<\infty$ (indeed $R_V$ is a finite dimensional or otherwise Poisson-homologically simple algebra in each case) and $\Hoch_1(\zhu(V))=0$, the last established in \cite[\S 11]{vanEkerenHeluani} by identifying $\zhu(V)$ explicitly (a quotient of $U(\sl_2)$, respectively $U(\g)$, respectively a quotient of the universal enveloping algebra of the Virasoro algebra) and computing its first Hochschild homology directly. Theorem \ref{thm:main-vanishing} then gives vanishing in every genus.
\end{proof}

\begin{rem}
Corollary \ref{cor:vanishing-examples} illustrates the general phenomenon, familiar from the degree $0$ theory, that finiteness and vanishing statements proved via a local, single-handle mechanism tend to hold in every genus for free: exactly as $C_2$-cofiniteness - itself a statement about the single formal neighborhood of a point - controls $\HchG{0}{-}$ uniformly in genus \cite{damiolini2020conformal}, our hypotheses $\dim\HP_1(R_V)<\infty$, $\HK_1(A)<\infty$, and $\Hoch_1(\zhu(V))=0$ are likewise statements purely about $V$, unrelated to any curve, and Theorems \ref{thm:main-finiteness}, \ref{thm:main-vanishing} show they propagate to every genus through the sewing construction. In particular, the manual verification of $\HK_1(A)=0$ referred to in Remark \ref{rem:classically-free-vs-hk1} - identifying $\zhu(V)$ explicitly and computing its first Hochschild homology, as carried out in \cite[\S 11]{vanEkerenHeluani} for each of the three families above - is a \emph{single, genus-independent computation about $V$ alone}: it is performed once, on data ($A$, $\zhu(V)$) that make no reference to any curve or any number of handles, and Theorem \ref{thm:main-vanishing} is precisely what promotes this one computation to a vanishing statement on every $X$ of every genus $g$, with no additional case-by-case work required as $g$ grows. This is the entire point of localizing the hypotheses at a single handle (Theorem \ref{thm:relative-main}): the price of higher genus is paid once, in the induction of \S\ref{sec:chiral-complex}, and not again for each new example of $V$.
\end{rem}

\section{Example: the Heisenberg vertex algebra}\label{sec:examples}

\begin{nolabel}
Let $V = M(1)$ be the rank $1$ Heisenberg vertex algebra:
 $V$ $=$ $\CC[a_{-1},a_{-2},\dots]$ $\cdot$ $\vac$, freely generated by a weight $1$ field $a=a_{-1}\vac$ with $[a_m,a_n]=m\delta_{m+n,0}$, $\omega = \tfrac12 a_{-1}a$, central charge $c=1$. This is the case $n=0$, $V$ inserted trivially (no self-extension) of our formalism, and illustrates how the ordinary character is recovered from \ref{no:genus-g-trace-def} in the trivial case $\sigma \equiv 0$ (equivalently $M=E=V$, the split self-extension), i.e.\ $F_1^{0,(g)}(;F)$ reduces, when the derivation $\sigma$ is dropped in favor of the plain zero mode, to the \emph{genus $g$ character}
\[
Z_V(\rhov) := \tr_{V^{\otimes g}}\Bigl[\bigl(\text{genus } 0 \text{ correlator with } 2g \text{ insertions}\bigr)\,\rho_1^{L_0^{(1)}}\cdots \rho_g^{L_0^{(g)}}\Bigr].
\]
\label{no:heisenberg-setup}
\end{nolabel}

\begin{ex}[Genus $1$]
By \ref{no:genus-g-trace-def} at $g=1$ there are no other handles to sew through and $Z_V(\rho_1) = \tr_V \rho_1^{L_0 - 1/24}$; since $V$ is, as a graded vector space, the free polynomial algebra on generators $a_{-n}$, $n\geq1$, of weight $n$, this is the classical Fock space character
\[
Z_V(\rho_1) = \rho_1^{-1/24}\prod_{n\geq1}(1-\rho_1^n)^{-1} = \frac{1}{\eta(\tau)}, \qquad \rho_1 = q = e^{2\pi i \tau},
\]
recovering \cite[\S 12]{vanEkerenHeluani}.
\label{ex:heisenberg-genus1}
\end{ex}

\begin{ex}[Genus $2$: the leading cross-handle correction]
By \eqref{eq:genus-g-trace-def} at $g=2$,
\begin{eqnarray*}
Z_V(\rho_1,\rho_2) = \res_{z_2,w_2}\Bigl[K(z_2,w_2;\rho_2)\, \bigl\langle a(z_2)\,a(w_2)\bigr\rangle_{E_{\rho_1}}\Bigr] 
\\
\qquad + \bigl(\text{contributions from } K\text{'s weight} \geq2 \text{ terms}\bigr),
\end{eqnarray*}
where $\langle a(z_2)a(w_2)\rangle_{E_{\rho_1}} := Z_V(\rho_1) \cdot \owp_2(z_2-w_2,\rho_1)$ is the standard chiral boson two-point function on the elliptic curve $\Xg{1}=E_{\rho_1}$ (Wick's theorem for the free field, together with the classical identification of the normalized torus propagator with $\owp_2$; see e.g.\ \cite[\S1]{tuite-zuevsky-bosonic}), and $K(z_2,w_2;\rho_2) = \rho_2\, a(z_2)\otimes a(w_2) + O(\rho_2^2)$ is the weight-$1$ term of the propagator kernel of \ref{no:genus-g-trace-def} (normalized by $\langle a,a\rangle = 1$). Extracting the residue picks out the constant term of the Laurent expansion of $\owp_2(z_2-w_2,\rho_1)$ at $z_2=w_2$ (there is no pole contribution here since $a\otimes a$ is being paired against the bidifferential itself, not differentiated), giving
\[
Z_V(\rho_1,\rho_2) = Z_V(\rho_1)\cdot\Bigl(1 + \rho_2\, g_2(\rho_1) + O(\rho_2^2)\Bigr),
\]
where $g_2(\rho_1)$ is the weight $2$ Eisenstein-type coefficient of \cite[\S4]{vanEkerenHeluani}. The first-order term in $\rho_2$ is exactly the new cross-handle contribution of Remark \ref{rem:genus2-cross-term}, now made completely explicit for the free field. The closed form of $Z_V(\rho_1,\rho_2)$, and more generally of $Z_V(\rhov)$ for every $g$, is given by an explicit determinant formula of Verlinde-Verlinde type, established in the sewing formalism by Mason and Tuite \cite{mason-tuite-genus2} for genus $2$ and by Tuite and Zuevsky \cite{tuite-zuevsky-bosonic} for general genus $g$; we do not reproduce it here, since it is not needed for, and would take us beyond, the purposes of this example, which is only to exhibit \eqref{eq:genus-g-trace-def} at work and to confirm Remark \ref{rem:genus2-cross-term} in a fully computable case.
\label{ex:heisenberg-genus2}
\end{ex}

\begin{rem}
Note that $R_V = \CC[a]$ for $V=M(1)$, with $\{a,a\} = a_{(0)}a = 0$: the induced Poisson bracket on $R_V$ vanishes identically, so that $\HP_1(R_V) = \Omega^1_{R_V} = \CC[a]\,da$ is infinite dimensional. Thus $V=M(1)$ itself does \emph{not} satisfy the hypothesis of Theorem \ref{thm:main-finiteness}, and is a natural place to look for infinite dimensional $\HchG{1}{X,V}$ in every genus, exactly as it is already, at $g=1$, the first source of examples beyond the scope of \cite[Thm.~11.1]{vanEkerenHeluani}; we do not pursue the corresponding degree $1$ computation for $M(1)$ here, and mention the finite dimensionality and vanishing statements only for the rational examples of Corollary \ref{cor:vanishing-examples}, where the hypotheses are met.
\label{rem:heisenberg-caveat}
\end{rem}

\begin{ex}[Affine vertex algebras]\label{ex:affine-example}
For a simple Lie algebra $\g$ and level $k$, let $V=V_k(\g)$ denote the simple affine vertex algebra, with Sugawara conformal vector of central charge $c = k\dim\g/(k+h^\vee)$, $h^\vee$ the dual Coxeter number of $\g$. As recalled in the proof of Corollary \ref{cor:vanishing-examples}, for $\g=\sl_2$ and $k$ any nonnegative integer, and for $\g$ arbitrary simple at $k=1$, $\zhu(V)$ is identified explicitly in \cite[\S 11]{vanEkerenHeluani} as a quotient of $U(\sl_2)$, respectively $U(\g)$, with $\Hoch_1(\zhu(V))=0$ verified directly there; combined with $\dim\HP_1(R_V)<\infty$ (again \cite[Cor.~11.7]{vanEkerenHeluani}), Theorem \ref{thm:main-vanishing} gives $\HchG{1}{X,V}=0$ on every compact Riemann surface $X$ of every genus $g$. Concretely, this means every one of the (a priori $g$-parameter family of, by Theorem \ref{thm:full-flatness}) self-extensions of $V^{\otimes n}$-modules built along any sewing presentation of any genus $g$ surface is trivial in first chiral homology: the rich,   higher-genus geometry of $\S\S\ref{sec:connection}$--$\ref{sec:trace-functions}$ (the cross-handle coupling of Remark \ref{rem:genus2-cross-term}, the $g$-fold Fourier-Borcherds identity, the $g$-handle trace functions) is present for $V_k(\g)$ exactly as for any other strongly finitely generated $V$, but it never obstructs the vanishing, because the obstruction is entirely a degree-$0$, single-handle question about $\zhu(V)$ (Remark \ref{rem:finiteness-natural}) that is answered once, for $V$ alone, independently of $g$.
\end{ex}

\begin{ex}[Virasoro minimal models]\label{ex:virasoro-example}
For $s\geq2$, the Virasoro minimal model $\vir_{2,2s+1}$ has central charge $c_{2,2s+1} = 1-\dfrac{3(2s-1)^2}{2s+1}$ (e.g.\ $s=2$ gives the Lee-Yang model, $c=-22/5$); it is classically free at the level of the universal Virasoro vertex algebra of generic central charge, and its simple quotient $\vir_{2,2s+1}$ is the case treated in Corollary \ref{cor:vanishing-examples}, where $\zhu(\vir_{2,2s+1})$ is a quotient of $U(\mathrm{Vir})$ with $\Hoch_1$ vanishing verified directly in \cite[\S11]{vanEkerenHeluani}. As with the affine case, Theorem \ref{thm:main-vanishing} then gives $\HchG{1}{X,\vir_{2,2s+1}}=0$ in every genus, illustrating the theorem on the (non-unitary, for $s\geq2$) minimal model series explicitly suggested for this purpose.
\end{ex}

\begin{rem}[Lattice vertex algebras: a natural open case]\label{rem:lattice-voa}
A natural family not covered by Corollary \ref{cor:vanishing-examples} is that of lattice vertex algebras $V_L$ for an even, positive-definite lattice $L$. These are among the standard examples of $C_2$-cofinite, rational vertex algebras (Dong \cite{dong-lattice}), thus the degree $0$ theory of \cite{damiolini2020conformal} applies to them uniformly in genus exactly as for the affine and Virasoro examples above. Whether $V_L$ satisfies the strictly stronger hypotheses of Theorem \ref{thm:main-vanishing} - $\dim\HP_1(R_{V_L})<\infty$ together with $\HK_1(A_{V_L})=0$ or $\Hoch_1(\zhu(V_L))=0$ - is, to our knowledge, not addressed by the case-by-case verification of \cite[\S11]{vanEkerenHeluani}, which treats only the affine and Virasoro families, and we do not attempt it here: $R_{V_L}$ is built from the group algebra of $L$ tensored with polynomial generators coming from the Cartan subalgebra of the associated Heisenberg vertex algebra, and unlike the simple affine and Virasoro quotients, we do not know whether the resulting Poisson structure makes $\HP_1(R_{V_L})$ finite dimensional. We mention  this explicitly as a natural test case for the methods of this paper: Theorem \ref{thm:main-vanishing} would apply to $V_L$ the moment the relevant Hochschild/Poisson vanishing is verified for $\zhu(V_L)$, by exactly the same argument as Examples \ref{ex:affine-example}-\ref{ex:virasoro-example}, with no further genus-dependent work required (Remark \ref{rem:finiteness-natural}); it is the single degree-$0$-flavored computation for $V_L$ alone that is currently missing, not any obstruction from the higher-genus machinery itself.
\end{rem}

\section{Conclusions}\label{sec:conclusion}

\begin{nolabel}
We have extended the theory of the first chiral homology group of \cite{vanEkerenHeluani} from elliptic curves to compact Riemann surfaces of arbitrary genus, by presenting a genus $g$ surface through iterated self-sewing of $g$ handles onto the Riemann sphere and observing that the entire construction of \cite{vanEkerenHeluani} - the complex, its flat connection, the modified vertex operators and Fourier-Borcherds identity, the higher trace functions and insertion formula, and the finiteness and vanishing criteria - is local to a single handle and hence transports, essentially unchanged, to a handle sewn onto an arbitrary base curve (Theorem \ref{thm:relative-main}). Iterating this Relative First Chiral Homology Theorem $g$ times produces the genus $g$ theory: an explicit complex computing $\HchG{0}{X,V^{\otimes n}}$ and $\HchG{1}{X,V^{\otimes n}}$ for any compact Riemann surface $X$ of genus $g$ (Theorem \ref{thm:genus-g-complex}); a projectively flat connection over the $g$-dimensional space of sewing parameters, with an explicit central anomaly and exhibiting new cross-handle coupling terms not present at genus $1$ (Theorem \ref{thm:full-flatness}, Corollary \ref{cor:projective-flatness-suffices}, Remark \ref{rem:genus2-cross-term}); a genus $g$ Fourier-Borcherds identity (Theorem \ref{thm:borcherds-genus-g}); genus $g$ trace functions attached to self-extensions, satisfying an insertion formula and converging under the same two finiteness hypotheses isolated in genus $1$ (Theorems \ref{thm:convergence-genus-g}, \ref{thm:insertion-genus-g}); the finite dimensionality of $\HchG{1}{X,V}$ in every genus under these same hypotheses (Theorem \ref{thm:main-finiteness}), answering the question raised in \cite[\S\S 1,13]{vanEkerenHeluani} and its counterpart already resolved in degree $0$ by Damiolini, Gibney and Tarasca \cite{damiolini2020conformal}; and the vanishing of $\HchG{1}{X,V}$, in every genus, for the same classically free, rational vertex algebras treated in genus $1$ (Theorem \ref{thm:main-vanishing}, Corollary \ref{cor:vanishing-examples}).
\end{nolabel}

\begin{nolabel}[Further directions]
Several questions raised by \cite{vanEkerenHeluani} in the elliptic case remain open, and are now questions about every genus simultaneously.
\begin{enumerate}
\item[(i)] \emph{Necessity of the finiteness hypotheses.} We do not know whether $\dim\HP_1(R_V)<\infty$ together with finite generation of $\HK_1(A)$ are necessary for $\dim \HchG{1}{X,V}<\infty$, in any genus, nor whether they can be replaced by a single condition in the spirit of the quasi-lisse hypothesis of Arakawa-Kawasetsu \cite{arakawa-kawasetsu} used for the degree $0$ theory. Our reduction of every genus to genus $1$ shows in particular that any improvement of the genus $1$ finiteness criterion of \cite{vanEkerenHeluani} would propagate automatically to every genus through Theorem \ref{thm:relative-main}.
\item[(ii)] \emph{Higher degree.} Beilinson-Drinfeld chiral homology is defined in every degree; \cite[\S 1]{vanEkerenHeluani} raise the question of $H_i^{\mathrm{ch}}$ for $i\geq2$ already in genus $1$, and our results say nothing about it. It would be interesting to know whether the sewing/induction-on-genus strategy of the present paper, being largely independent of the fact that we truncated to a two-term complex, adapts to the full Chevalley-Eilenberg-type complex computing all degrees at once.
\item[(iii)] \emph{Compatibility of sewing schemes.} We have used exclusively the iterated, one-handle-at-a-time ($\rho$-formalism) sewing scheme of \cite{tuite-zuevsky-bosonic}. Tuite and Zuevsky also study the $\epsilon$-formalism, gluing two independent surfaces along a pair of points, and observe, for the free fermion, that the two schemes are not simply related on their overlap \cite{tuite-zuevsky-fermionic}. Whether $\HchG{1}{-}$, defined intrinsically (independent of a sewing presentation, as it must be, being an invariant of the curve $X$ itself, by Theorem \ref{thm:genus-g-complex} together with the retrosection theorem), can also be computed via $\epsilon$-sewing, and whether the resulting trace functions agree with ours after analytic continuation across the two schemes' domains, is not addressed here.
\item[(iv)] \emph{Global modular covariance.} We have worked locally on the sewing polydisc near the maximally degenerate stratum of $\overline{\CM}_g$, sufficient for the finiteness and vanishing theorems, which are local (in the sense of extending from a punctured neighborhood of $\rhov=0$) statements. We have not addressed the global equivariance of the resulting bundle of trace functions under the full mapping class group $\Modg$ (the higher genus analogue of the modular invariance theorem of \cite[Thm.~6.5]{vanEkerenHeluani}, generalizing Zhu's theorem \cite{zhu}); this would require globalizing our construction over the whole of Schottky space $\Sch_g$, or equivalently over $\CM_g$, using the projectively flat connection of Theorem \ref{thm:full-flatness} to identify the fibers over different points of a $\Modg$-orbit, in the spirit of \cite{damiolini2020conformal} but now in degree $1$.
\item[(v)] \emph{The Heisenberg example.} As observed in Remark \ref{rem:heisenberg-caveat}, the rank $1$ Heisenberg vertex algebra does not satisfy our finiteness hypothesis, since $\HP_1(\CC[a],0)$ is infinite dimensional for the trivial Poisson structure; identifying $\HchG{1}{X,M(1)}$ explicitly, in any genus, in this infinite dimensional setting, is left for future work, as it already is in \cite[\S12]{vanEkerenHeluani} at genus $1$.
\end{enumerate}
\end{nolabel}
The material of this paper is also useful in other areas of mathematical physics 
\cite{eh2025, Frohlich2009gb, RSZ} and condensed matter theory \cite{zub1, zub2, zub3, zub4, zub5, zub6, zub7, zub8, zub9, kmmzz}. 

\section*{Acknowledgements}

The author is supported by the Institute of Mathematics, Academy of Sciences
of the Czech Republic (RVO 67985840). 

\medskip
\noindent\textbf{Data Availability.}
Data sharing is not applicable to this article as no datasets were generated
or analysed during the current study.

\medskip
\noindent\textbf{Declarations}

\medskip
\noindent\textbf{Conflict of interest.}
The author has no conflicts of interest to declare that are relevant to the
content of this article.


\end{document}